\newtheorem{introthm}{Theorem}
\definecolor{darkgreen}{rgb}{0,0.50,0} 
\definecolor{darkred}{rgb}{0.55,0,0}
\definecolor{darkblue}{rgb}{0,0,0.6}
\def\makeautorefname#1#2{\expandafter\def\csname#1autorefname\endcsname{#2}}
\theoremstyle{definition}
\newtheorem{definition}{Definition}[subsection]
\newtheorem{lemma}[definition]{Lemma}%for sections with subsections: 2,3
\newtheorem{prop}[definition]{Proposition}
\newtheorem{remk}[definition]{Remark}
\theoremstyle{thm}
\newtheorem{thm}{Theorem}[section]
\newtheorem{cor}[thm]{Corollary}
\newtheorem{definitionsection}[thm]{Definition}%For section 4 etc
\newtheorem{lemmasection}[thm]{Lemma}%[section]%For section 4 etc
\newtheorem{remarksection}[thm]{Remark}
\let\c@cor=\c@thm
\let\c@prop=\c@thm
\let\c@proposition=\c@thm
\let\c@theorem=\c@thm
\let\c@lem=\c@thm
\let\c@definition=\c@thm
\let\c@conj=\c@thm
\let\c@defn=\c@thm
\let\c@df=\c@thm
\let\c@exmp=\c@thm
\let\c@rem=\c@thm
\let\c@lemma=\c@thm
\let\c@sch=\c@thm
\let\c@con=\c@thm
\let\c@equation\c@thm
\newcommand{\im}{\mathrm{Im}}
\newcommand{\SK}{\mathrm{SK}}
\newcommand{\SKK}{\mathrm{SKK}}
\newcommand{\id}{\mathrm{id}}
\newcommand{\Cob}{\mathrm{Cob}}
\newcommand{\overbar}[1]{\mkern 3mu\overline{\mkern-3mu#1\mkern-4mu}\mkern 4mu}
\def\C{\mathcal{C}}
\def\d{\partial}
\newcommand{\mM}{\mathcal{M}}
\newcommand{\R}{\mathbb R}
\newcommand{\Z}{\mathbb Z}
\newcommand{\Map}{\textup{Triv}}%{\textup{Map}}
\newcommand{\Mnfldbd}{\textup{Mfd}^\partial}
\newcommand{\op}{\textup{op}}
\newcommand{\CEmb}{\text{SK-Emb}}%think of notation
\newcommand{\SKbar}{\overline{\mathrm{SK}}}
\newcommand{\cube}{\text{\mancube}}
\newcommand{\mbar}{\overline{\mathrm{Mfd}}^\partial_n}
\def\moverlay{\mathpalette\mov@rlay}
\def\mov@rlay#1#2{\leavevmode\vtop{%
   \baselineskip\z@skip \lineskiplimit-\maxdimen
   \ialign{\hfil$\m@th#1##$\hfil\cr#2\crcr}}}
\newcommand{\charfusion}[3][\mathord]{
    #1{\ifx#1\mathop\vphantom{#2}\fi
        \mathpalette\mov@rlay{#2\cr#3}
      }
    \ifx#1\mathop\expandafter\displaylimits\fi}
\newcommand{\Tcup}{\charfusion[\mathop]{\bigsqcup}{\innerh}}
\newcommand{\innerh}{\mathchoice
{\scriptsize{\raisebox{3pt}{\textsc{T}}}}
{\tiny{\raisebox{2pt}{\textsc{T}}}}
{\tiny{\raisebox{2pt}{\textsc{T}}}}
{\tiny{\raisebox{2pt}{\textsc{T}}}}
}
\newcommand{\lout}{\bgroup\markoverwith
	{\textcolor{orange}{\rule[.5ex]{2pt}{0.4pt}}}\ULon}
\newcommand{\jout}{\bgroup\markoverwith
	{\textcolor{magenta}{\rule[.5ex]{2pt}{0.4pt}}}\ULon}
\newcommand{\cout}{\bgroup\markoverwith
	{\textcolor{darkgreen}{\rule[.6ex]{3pt}{0.6pt}}}\ULon}
	\newcommand{\mout}{\bgroup\markoverwith
	{\textcolor{blue}{\rule[.6ex]{3pt}{0.6pt}}}\ULon}
\newcommand{\renee}[1]{{\color{red}{#1}}}
\newcommand\rout{\bgroup\markoverwith
	{\textcolor{red}{\rule[.5ex]{2pt}{0.4pt}}}\ULon}
\newlength{\storeparskip}
\author[R. S. Hoekzema]{Renee  S. Hoekzema}
\address{Department of Mathematics, Vrije Universiteit Amsterdam}
\email{r.s.hoekzema@vu.nl}
\author[C. Rovi]{Carmen Rovi }
\address{Department of Mathematics \& Statistics, Loyola University Chicago}
\email{crovi@luc.edu}
\author[J. Semikina]{Julia Semikina}
\address{Laboratoire Paul Painlev\'e, Universit\'e de Lille}
\email{iuliia.semikina@univ-lille.fr}
\date{}
\title{A $K$-theory spectrum for cobordism cut and paste groups}
\begin{document}

\begin{abstract} 
%\renee{Cobordism groups and cut and paste groups of manifolds, $\SKbar_n$, arise from quotienting the monoid of manifolds under disjoint union by both the cobordism relation and the cut and paste relation.}  
Cobordism groups and cut and paste groups of manifolds arise from imposing  two different relations on the monoid of manifolds under disjoint union.  By imposing both relations simultaneously, a cobordism cut and paste group $\SKbar_n$ is defined.  
In this paper, we extend this definition to manifolds with boundary obtaining  the group $\SKbar^{\partial}_n$ and study the relationship of the group to an appropriately defined cobordism group of manifolds with boundary. The main results are the construction of a spectrum that recovers on $\pi_0$ the cobordism cut and paste groups of manifolds with boundary, $\SKbar^{\partial}_n$, and a map of spectra that lifts the canonical quotient map $\SK^{\partial}_n \rightarrow \SKbar^{\partial}_n$.

\end{abstract}

\maketitle

\vspace{-2ex}

\begingroup%
\setlength{\parskip}{\storeparskip}% Restore \parskip within this scope
\tableofcontents
\endgroup%

\setcounter{section}{0}

\section{Introduction}

The classical notion of cut and paste equivalence for closed manifolds was first introduced by Karras, Kreck, Neumann, and Ossa \cite{KKNO}. The cut and paste operation on closed $n$-dimensional manifolds is described by the following procedure. 
Given a manifold, one cuts along a separating codimension 1 submanifold with trivial normal bundle and glues the resulting two pieces back together along an orientation-preserving diffeomorphism to obtain a new manifold.
Manifolds $M$ and $N$ are called $\SK$-equivalent ($\SK$ stands for ``schneiden und kleben'', German for ``cut and paste'') if $N$ can be obtained from $M$ by a sequence of finitely many cut and paste operations. Equivalence classes of $n$-manifolds up to $\SK$-equivalence form the groups $\SK_n$ of cut and paste invariants of manifolds, with the operation given by disjoint union.

In \cite{SKpaper} the notions of $\SK$-equivalence and $\SK$-groups were generalised to the case of manifolds with boundary. 
Allowing boundary made it possible
to formulate a suitable notion of a pointed category with squares $\Mnfldbd_n$, that fits into the framework of $K$-theory with squares due to {\color{blue} \cite{campbell2023algebraic}}
%Campbell and Zakharevich \cite{CZsquare}
.
Their construction is a modified version of the Thomason's construction of the $K$-theory spectrum for Waldhausen's category, but it allows more flexibility for the category. The distinguished squares for the category $\Mnfldbd_n$ correspond to pushout squares that glue manifolds along a common codimension 0 submanifold, and these can be shown to encode the $\SK^\d$-relations. The $K$-theory spectrum $K^{\square}(\Mnfldbd_n)$ constructed in \cite{SKpaper}, recovers the $\SK^\d_n$ as its zeroth homotopy group.

A different notion of equivalence of manifolds is cobordism. Two $n$-dimensional manifolds $M$ and $N$ are cobordant if their disjoint union forms the boundary of a $(n+1)$-dimensional manifold. Cobordism classes of $n$-manifolds again form a group $\Omega_n$ under disjoint union. The cobordism cut and paste group $\SKbar_n$ is given by quotienting $n$-manifolds by both cobordism and $\SK$-equivalence. For oriented manifolds it was shown in \cite{KKNO} that the only cobordism cut and paste invariant is the signature.

In the current paper we generalise the definition of the $\SKbar$-groups to the case of manifolds with boundary and denote the corresponding group $\SKbar^{\d}_n$. 
In order to define this group we need to quotient by an appropriate notion of cobordism for manifolds with boundary. The usual notion of cobordism for manifolds with boundary allows for cobordisms with corners and free boundary components, but in this setting any two manifolds of the same dimension are cobordant.
To fix this issue we restrict to cobordisms that are trivial on the boundary, meaning that when restricted to the boundary of the in- and outgoing manifolds, the cobordism is diffeomorphic to a cylinder. We show that the groups $\SKbar^{\d}_n$ are related to the classical $\SKbar_n$ groups via an exact sequence analogous to the one found for $\SK^{\d}_n$ in \cite{SKpaper}

   \begin{center}
    $\begin{CD}
     0 @>>> \SKbar_{n} @>\alpha>{[M] \mapsto [M]}> \SKbar^{\partial}_{n} @>\beta>{[N] \mapsto [\partial N]}> C_{n-1} @>>>0
     \end{CD}.$
    \end{center} 
Here $C_{n-1}$ is the group completion of the monoid of orientation preserving diffeomorphism classes of  bounding $(n-1)$-manifolds under disjoint union. 
This is a free abelian group. 

Next we construct a $K$-theory spectrum that recovers the $\SKbar^{\d}_n$ as its zeroth homotopy group.
For this we take inspiration from the $K$-theory with squares construction of {\color{blue} \cite{campbell2023algebraic}} and generalise it in several ways. The input for the $K^{\square}$-construction is a category with squares $\C$, which is a pointed category with two subcategories of morphisms referred to as horizontal (denoted $\hookrightarrow$) and vertical maps (denoted $\rightarrowtail$), and distinguished squares 
$$\begin{tikzcd}[column sep=tiny, row sep=tiny]
    	A\arrow[rr, hook] \arrow[dd, tail] && B\arrow[dd,tail]\\
    	& \square &\\
    	C\arrow[rr, hook] && D
\end{tikzcd}$$
satisfying certain conditions (see \cite{campbell2023algebraic} and \cite{SKpaper}). 

Given a category with squares $\C$, 
%Campbell and Zakharevich 
%associate to it a 
 one associates to it a bisimplicial set $N_{\bullet}\C^\bullet$, where the set of $(k,l)$-simplices $N_{k}\C^{(l)}$ is given by the set of all $k \times l$ diagrams 
$$\begin{tikzcd}[column sep=tiny, row sep=tiny]
    	C_{00} \arrow[rr, hook] \arrow[dd, tail] && C_{01} \arrow[rr, hook] \arrow[dd, tail] && \ldots \arrow[rr, hook] && C_{0l} \arrow[dd, tail] \\
    	& \square && \square && \square &\\
    	C_{10} \arrow[rr, hook] \arrow[dd, tail] && C_{11} \arrow[rr, hook] \arrow[dd, tail] && \ldots \arrow[rr, hook] && C_{1l} \arrow[dd, tail]\\
    	& \square && \square && \square &\\
    	\vdots \arrow[dd, tail] && \vdots \arrow[dd, tail] && \ddots && \vdots \arrow[dd, tail]\\
    	& \square && \square && \square &\\
    	C_{k0} \arrow[rr, hook] && C_{k1} \arrow[rr, hook] && \ldots \arrow[rr, hook] && C_{kl},
\end{tikzcd}$$
with face maps given by deleting a corresponding row or column and degeneracy maps given by inserting the copy of the corresponding row or column. The squares $K$-theory space of $\C$ is $$K^\square(\C)\simeq \Omega_O |N_{\bullet}\C^\bullet|,$$ where $\Omega_O$ is the based loop space, based at the distinguished object $O\in N_0\C^{(0)}.$
For our purposes we generalise the construction as follows.
\begin{enumerate}
    \item Instead of having two simplicial directions and distinguished squares we will have three directions($\hookrightarrow$, $\rightarrowtail$, $\rightsquigarrow$) and distinguished cubes as part of the data. It is easy to see that the construction of {\color{blue} \cite{campbell2023algebraic}}
    %Campbell and Zakharevich 
    has a straightforward generalisation for the categories with distinguished $m$-cubes for any $m \geq 2$.
    
    \item The substantial modification is that we do not require morphisms corresponding to different simplicial directions to come from the same category. In our application objects are $n$-dimensional manifolds with boundary, but the three classes of morphisms are of a different nature: two simplicial directions, horizontal ($\hookrightarrow$) and vertical ($\rightarrowtail$), are coming from a particular type of embeddings of manifolds that we refer to as $\SK$-embeddings;  the third (depth) simplicial direction ($\rightsquigarrow$) is given by cobordisms between manifolds that are cylindrical on the boundary. We moreover have $\SK$-embeddings between the cobordisms as part of the data defining a cube.
    
    \item  We take the topology into account  by means of a fourth simplicial direction, in the spirit of \cite{raptis2017parametrized}.
    %\rout{and instead of working with simplicial sets as was done in \cite{CZsquare}, we work with simplicial spaces.} 
    As a result of this modification, we obtain a topological version of the cut and paste spectrum of manifolds (for the discrete one see \cite{SKpaper}), which also has $\SK_{n}^{\d}$ as its $\pi_0.$ We denote it by $K^{\square}(\mbar).$ 
\end{enumerate}

We construct a certain  quadrisimplicial set $X^*_{\bullet, \bullet, \bullet}$ with $(r, k, l, m)$-simplices given by $k \times l \times m$ diagrams consisting of distinguished cubes fibred over $\Delta^r$ and show that its geometric realisation is an infinite loop space. 
%\rout{trisimplicial space $X_{\bullet, \bullet, \bullet}$ with $(k, l, m)$-simplices given by the $k \times l \times m$ diagrams consisting of distinguished cubes and show that its (fat) geometric realisation is an infinite loop space. }
Since the resulting space is connected, its $\pi_0$ does not say anything interesting, and we shift $\pi_0$ by looking at the space of loops instead. We denote the resulting spectrum by $K^\cube(\overline{\text{Mfd}}^\partial_n)$, where the $\cube$ notation reflects the three simplicial directions  given by maps of manifolds that are used in the construction; $\Mnfldbd_n$ refers to the nature of objects; and bar over it is present to emphasize the topology  captured in the fourth simplicial direction. Let us denote by $K_i^\cube(\overline{\text{Mfd}}^\partial_n)$ the $\pi_i$ of the spectrum. 
 \begin{introthm} \label{pi_o thm}
There is an isomorphism $K_0^\cube(\mbar) \cong \SKbar^{\d}_n$.
 \end{introthm}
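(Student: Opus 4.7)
The plan is to identify $K_0^\cube(\mbar)$ with $\pi_1$ of the (fat) realisation of $X_{\bullet,\bullet,\bullet}$ at the basepoint $\emptyset$, and to compute that fundamental group by the standard presentation via $1$- and $2$-cells of the realisation of a trisimplicial space. The resulting presentation is then matched against the presentation of $\SKbar^{\d}_n$ established earlier in the paper via the exact sequence relating it to $\SKbar_n$. The argument is a three-directional analogue of the computation in \cite{SKpaper} that $\pi_0 K^\square(\Mnfldbd_n)\cong \SK^{\d}_n$, and I would follow that template while isolating the contribution of the new depth direction.

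\textbf{Generators.} For each $M \in \Mnfldbd_n$, the horizontal edge $\emptyset \hookrightarrow M$ concatenated with the reverse of $\emptyset \hookrightarrow \emptyset$ defines a loop in $|X_{\bullet,\bullet,\bullet}|$ and hence a class $[M] \in \pi_1$. Using distinguished cubes in which all but one face is trivial, I would show that the vertical and depth edges from $\emptyset$ produce the same classes as their horizontal counterparts, and a standard spine argument as in \cite{CZsquare,SKpaper} then implies that every element of $\pi_1$ is a $\Z$-linear combination of such $[M]$'s.

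\textbf{Relations.} I would sort the $2$-cells of the realisation by trisimplicial tridegree. The $(2,0,0)$-, $(0,2,0)$- and $(1,1,0)$-cells, with trivial depth coordinate, recover (the topological analogue of) the bisimplicial space associated to the category with squares $\Mnfldbd_n$, and hence by \cite{SKpaper} impose exactly the additivity relation $[M \sqcup N] = [M] + [N]$ and the $\SK^{\d}$-relations. The new input comes from the cells involving the depth direction: for any boundary-trivial cobordism $W\colon A \rightsquigarrow B$, the $(0,0,1)$-edge given by $W$ combined with suitable $(1,0,1)$- or $(0,1,1)$-faces of a distinguished cube whose other manifold entries are $\emptyset$ yields a $2$-cell imposing $[A] = [B]$; in particular, if $M$ bounds via a boundary-trivial cobordism then $[M] = 0$. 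The $(1,1,1)$-simplices (distinguished $3$-cubes) contribute $3$-cells which ensure coherence among the above $2$-cell relations but impose no new ones.

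\textbf{Main obstacle and conclusion.} The principal technical point is to check that the mixed distinguished cubes, namely those with nontrivial edges in two or three of the simplicial directions, produce no additional relations beyond additivity, $\SK^{\d}$, and boundary-trivial cobordism. This requires carefully unpacking what such a cube records, namely an $\SK$-embedding between two cobordisms inducing compatible $\SK$-embeddings on their incoming and outgoing boundary manifolds, and a verification that the $2$-cell boundary of each face lies in the subgroup generated by the three expected families of relations. A secondary point specific to the topological setting is that $X_{\bullet,\bullet,\bullet}$ is a trisimplicial space, so one must argue that $\pi_1$ of the fat realisation depends only on the $\pi_0$-information of each level; this follows from the skeletal filtration and realisation lemma for fat realisations. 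Once these are in place, the generators and relations obtained match the defining presentation of $\SKbar^{\d}_n$ exactly, giving the stated isomorphism.
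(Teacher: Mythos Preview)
Your overall strategy is sound and close in spirit to the paper's, but the execution differs and there are two genuine gaps.

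\textbf{Different route.} The paper does \emph{not} compute $\pi_1$ using the trisimplicial CW structure you describe. It passes to the diagonal simplicial space $X^\delta_\bullet$, so that $1$-simplices are full cubes in $X_{1,1,1}$ and $2$-simplices are $2\times 2\times 2$ diagrams in $X_{2,2,2}$; it then reduces via Lemma~4.3 to the simplicial set $\pi_0 X^\delta_\bullet$ and runs a cube-by-cube reduction of generators. Your proposal instead works with the product CW structure, where $1$-cells have tridegree summing to $1$ and $2$-cells have tridegree summing to $2$. Both are legitimate, but the bookkeeping is genuinely different: in the paper the cobordism relation emerges from specialising the identity ``generic cube $=$ front-left vertical $=$ back-right vertical'' to a cube with $\varnothing$ in the top face, whereas in your approach it must come from $(0,1,1)$- and $(1,0,1)$-cells together.

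\textbf{Gap 1: the generators.} Your definition of $[M]$ as ``the horizontal edge $\varnothing\hookrightarrow M$ concatenated with the reverse of $\varnothing\hookrightarrow\varnothing$'' is not a loop: it is a path from $\varnothing$ to $M$. What actually happens (in both the paper and in \cite{SKpaper}) is that one contracts a spanning cone of horizontal edges $\varnothing\hookrightarrow M$, after which those edges become trivial and the generators $[M]$ are the \emph{vertical} edges $\varnothing\rightarrowtail M$. In your framework you should do the same; your claim that horizontal, vertical and depth edges from $\varnothing$ ``produce the same classes'' is false on its face once the horizontal ones are in the spanning tree.

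\textbf{Gap 2: the cobordism relation.} A single $(1,0,1)$-cell $\varnothing\hookrightarrow W$ with $W\colon A\rightsquigarrow B$ does \emph{not} yield $[A]=[B]$. Its boundary relation is
\[
[\varnothing\hookrightarrow A]_h + [W]_d \;=\; [\varnothing\text{-cob}]_d + [\varnothing\hookrightarrow B]_h,
\]
which, after contracting horizontal edges and proving $[\varnothing\text{-cob}]_d=0$, only says that the depth loop $[W]_d$ is trivial. To obtain $[A]=[B]$ among the actual generators you must then invoke a $(0,1,1)$-cell $\varnothing\rightarrowtail W$, whose boundary gives $[\varnothing\rightarrowtail A]_v + [W]_d = [\varnothing\text{-cob}]_d + [\varnothing\rightarrowtail B]_v$, hence $[A]_v=[B]_v$. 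Moreover, the step $[\varnothing\text{-cob}]_d=0$ is not purely combinatorial: it requires the $(0,0,2)$-composition relation \emph{together with} the fact that cobordisms of different positive lengths lie in the same path component (the $(0,\infty)$ factor in $X_{0,0,1}$), which is exactly where the passage to $\pi_0$ levelwise is used. You flag this reduction as ``secondary'', but it is essential here and you should make explicit where it enters.

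Finally, your plan to ``verify that mixed $2$-cells impose no new relations'' is the right closing step, but as written it is only a promise; the paper carries this out by showing the relation from a generic $2$-simplex reduces to the $\SK^\partial$ and cobordism relations already obtained.
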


  From our definition of the quadrisimplicial set $X^*_{\bullet, \bullet, \bullet}$ we automatically deduce a topological analogue of the squares cut and paste spectrum, whose infinite loop space is obtained from a trisimplicial space $X^*_{\bullet, \bullet, 0}$ by taking the loop space of its geometric realisation. We show that the inclusion of the trisimplicial into the quadrisimplicial set %this 
 provides a categorification of the quotient map $\SK_{n}^\d \rightarrow \SKbar_{n}^\d$.
 %\rout{From our definition of the trisimplicial space $X_{\bullet, \bullet, \bullet}$ we automatically deduce a topological analogue of the squares cut and paste spectrum, whose infinite loop space is obtained from a bisimplicial space $X_{\bullet, \bullet, 0}$ by taking the loop space of its (fat) geometric realisation. We show that the inclusion of the bisimplicial into the trisimplicial space %this 
 %provides a categorification of the quotient map $\SK_{n}^\d \rightarrow \SKbar_{n}^\d$.}
 
 \begin{introthm} \label{quotient map thm}
   The map of spectra 
   \[
   K^{\square}(\mbar) \to K^{\cube}(\mbar)
   \]
  coming from the inclusion  $X^*_{\bullet, \bullet, 0} \to X^*_{\bullet, \bullet, \bullet}$ induces the canonical quotient map
  \[
  \SK_{n}^\d \rightarrow \SKbar_{n}^\d
  \]
    on the zeroth homotopy groups.
 \end{introthm}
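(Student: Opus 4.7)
The plan is to trace an explicit representative of a generator through the two $\pi_0$ identifications and observe that the inclusion of multisimplicial spaces carries the representative to itself. Under the isomorphism $K_0^\cube(\mbar) \cong \SKbar^\d_n$ provided by \autoref{pi_o thm}, the class $[M]$ is represented by a specific loop in $|X_{\bullet, \bullet, \bullet}|$, based at the distinguished object $\emptyset$, built from the object $M \in X_{0,0,0}$ together with the $1$-simplex $(\emptyset \hookrightarrow M) \in X_{1,0,0}$ (say, in the horizontal direction). The same construction produces the identification $K_0^\square(\mbar) \cong \SK^\d_n$ mentioned in the introduction: it is proved by the same argument as \autoref{pi_o thm}, now applied to the bisimplicial space $X_{\bullet, \bullet, 0}$, which has no depth edges and hence yields no cobordism relations.

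The map of spectra induced by $\iota \colon X_{\bullet, \bullet, 0} \hookrightarrow X_{\bullet, \bullet, \bullet}$ is, by functoriality of fat geometric realisation and of based loops, given on underlying spaces as the composite of the subspace inclusion $|X_{\bullet, \bullet, 0}| \hookrightarrow |X_{\bullet, \bullet, \bullet}|$ followed by the corresponding map of loop spaces. In particular, it carries the loop representing $[M] \in \SK^\d_n$ on the left to literally the same loop, now viewed inside $|X_{\bullet, \bullet, \bullet}|$. By the identifications of the previous paragraph, this loop represents $[M] \in \SKbar^\d_n$, so the induced map on $\pi_0$ sends $[M] \mapsto [M]$, which is precisely the defining formula of the canonical quotient map.

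The principal point requiring care is spelling out the $\pi_0$ identification for $K^\square(\mbar)$, since the excerpt only states it in the trisimplicial case. This is not a genuine obstacle: running the proof of \autoref{pi_o thm} verbatim on $X_{\bullet, \bullet, 0}$ produces exactly the $\SK^\d$-relations, with no cobordism relation appearing for lack of a depth direction. The remainder of the argument is a formal naturality check, and the observation that both identifications are defined by means of the same $\emptyset$-based loop built from $M$ ensures automatic compatibility with $\iota$.
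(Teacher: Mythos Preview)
Your proposal is correct and follows essentially the same approach as the paper: the paper's proof is the one-line statement ``this is evident from comparing the proofs of Theorem~\ref{thm:pi_0} and Theorem~\ref{thm:normalSK}'', and your argument is exactly an unpacking of that comparison, tracing the generating loop for $[M]$ through both identifications and noting it is preserved by the inclusion. The only cosmetic difference is that the paper chooses to contract the horizontal $1$-simplices and use vertical arrows $\varnothing \rightarrowtail M$ as generators, whereas you pick the horizontal direction; the two roles are symmetric, so this does not affect the argument.
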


The paper is organised as follows. In Section \ref{SKbarsec} we introduce an appropriate trivial boundary cobordism relation and corresponding cobordism groups $\Omega^\partial_n$ for smooth compact oriented $n$-manifolds with boundary. Then we define $\SKbar^{\partial}$-groups (``cut and paste cobordism groups'') for manifolds with boundary. We relate these new groups to the classical ones for closed manifolds via exact sequences. In Section \ref{Xsec} we construct  a quadrisimplicial set $X^*_{\bullet, \bullet, \bullet}$ in the spirit of the construction of \cite{campbell2023algebraic}, which gives rise to a spectrum $K^{\cube}(\mbar)$. In Section \ref{GammaSec} we verify that  $|X^*_{\bullet, \bullet, \bullet}|$ is indeed an infinite loop space. In Section \ref{pi_0ComputationSec} we prove Theorem \ref{pi_o thm} and finally in Section \ref{induced maps section} we prove  Theorem \ref{quotient map thm}.

\subsection*{Conventions}
All manifolds in this paper will be smooth, oriented and compact. All maps between manifolds/cobordisms will be smooth and orientation preserving.

\subsection*{Acknowledgements}
We would like to thank Jonathan Campbell, Jim Davis, Johannes Ebert, Fabian Hebestreit, Manuel Krannich, Achim Krause, Wolfgang L\"{u}ck, Mona Merling, Thomas Nikolaus,  Oscar Randal-Williams,    George Raptis, Jens Reinhold, Jan Steinebrunner, Wolfgang Steimle, Inna Zakharevich for helpful discussions. We would also like to thank the anonymous referee for very helpful comments
We thank the Mathematics and Statistics Department at Loyola Chicago for hosting us. This research was supported through the program ``Research in Pairs”
by the Mathematisches Forschungsinstitut Oberwolfach in 2020 and 2022, and through the program ``Summer Research in Mathematics'' by the Mathematical Sciences Research Institute. 
The first author was supported by the Emerson Collective and by the Dutch Research Council (NWO) through the grant VI.Veni.212.170.
The second author was  supported by a summer research grant from Loyola University Chicago and by MPIM Bonn.
The third author was funded by the Deutsche Forschungsgemeinschaft (DFG, German Research Foundation) under Germany's Excellence Strategy EXC 2044 –390685587, Mathematics M\"unster: Dynamics–Geometry–Structure.

\section{Cobordism cut and paste groups of manifolds with boundary}\label{SKbarsec}

\subsection{Classical $\SKbar$ groups}
The cut and paste group $\SK_n$ and the cobordism group $\Omega_n$ are both formed by quotienting the monoid of manifolds under disjoint union by the cut and paste relation in the first case, and by the cobordism relation in the second. There are no natural maps from one group to the other in either direction. They can be compared by quotienting the monoid of manifolds by both relations, forming the cobordism cut and paste group $\SKbar_n$. The following pair of short exact sequences from \cite{KKNO} desribe the kernels of the two quotients:
\begin{eqnarray}
F_n \longrightarrow 
&  \Omega_n \longrightarrow & \SKbar_n \label{eqn:twoSES-1}\\
I_n \longrightarrow & \SK_n \longrightarrow & \SKbar_n. \label{eqn:twoSES-2}
\end{eqnarray}

In even dimensions $I_n$ is the subgroup of $\SK_n$ generated by spheres $S^n$. In odd dimensions $I_n$ is zero. The group $F_n \subseteq \Omega_n$ is the subgroup of bordism classes of closed manifolds that fibre over $S^1$. 
It was shown in \cite{KKNO} that the only cut and paste invariants of oriented manifolds are the Euler characteristic $\chi$ and the signature $\sigma$. Noting that $\sigma$ and $\chi$ have the same parity, we have:
\begin{equation*}
  \SK_n \cong  \begin{cases}
    0 & \text{for $n$ odd}\\
    \Z & \text{for $n\equiv 2  \;(\mathrm{mod}\; 4) $ \quad given by $\sfrac{\chi}{2}$}\\
    \Z^2 & \text{for $n\equiv 0  \;(\mathrm{mod}\; 4) $ \quad given by $\left(\frac{\chi - \sigma}{2}, \sigma \right)$.}
  \end{cases}
\end{equation*}

In $\Omega_n$, two manifolds are in the same class if and only if their Stiefel-Whitney and Pontrjagin numbers agree. In particular the signature, which is expressed as a linear combination of Pontrjagin numbers by the Hirzebruch signature theorem, is a cobordism invariant, as well as the Euler characteristic modulo 2 given by the top-dimensional Stiefel-Whitney number. 
In the cobordism cut and paste group only the signature remains
\begin{equation*}
 \hspace{-35pt} \SKbar_n \cong  \begin{cases}
    0 & \text{for $n\not\equiv 0  \;(\mathrm{mod}\; 4) $}\\
    \Z & \text{for $n\equiv 0  \;(\mathrm{mod}\; 4) $ \quad given by $\sigma$.}
  \end{cases}
\end{equation*}

\subsection{Cobordism with trivial boundary} \label{Sec: cob triv boundary}

\begin{definition}
Let $\mM^\d_n$ be the monoid of diffeomorphism classes of smooth compact oriented $n$-dimensional manifolds with boundary under disjoint union. We define the
\textit{trivial boundary cobordism monoid}
$\widetilde{\Omega}^\partial_{n}$ to be $\mM^\d_n$ quotiented by the following relation.
Two classes of manifolds $[M]$ and $[M']$ with boundary are equivalent if there exists a cobordism $W$ with boundary and corners, where the boundary consists of three parts: one that is diffeomorphic to $M$, one that is diffeomorphic to $\bar{M}'$ and one that is diffeomorphic to $\d M \times I \cong  \overbar{\d M'} \times I$. 
Recall that $\bar{M}'$ stands for manifold $M'$ with an opposite orientation.
\end{definition}

\begin{definition}
The \textit{trivial boundary cobordism group}
$\Omega^\partial_n$ is defined to be the group completion of $\widetilde{\Omega}^\partial_{n}$.
\end{definition}

\begin{remk}
The trivial boundary cobordism monoid $\widetilde{\Omega}^\partial_{n}$ is not cancellative.
\end{remk}

 A trivial boundary cobordism corresponds to the trace of a sequence of surgery operations applied to the interior of the ingoing manifold, leaving its boundary unchanged. 
On closed manifolds of dimension $4k$, a surgery operation leaves the signature invariant. The fact that this is also true for a surgery on the interior of a manifold with boundary, follows from applying Wall's non-additivity theorem \cite{wall1969non, kirby2006topology} to the gluing of manifolds with boundary along entire connected components of the boundary.
%, such that the remaining boundary and gluing boundary having empty intersection. 
Wall's non-additivity of the signature features a correction term involving codimension two data that is empty in this situation. The theorem therefore reduces to a version of Novikov additivity \cite[Proposition 7.1]{atiyah1968index} for manifolds with boundary, provided that the inclusion maps of the two pieces take boundary components entirely to the boundary or to the interior (the maps should be a cut and paste embeddings as defined below in Definition \ref{def:SKemb}). Hence the signature for manifolds with boundary is an invariant of trivial boundary cobordism. 
%A straightforward application of Novikov additivity \cite[Proposition 7.1]{atiyah1968index} shows that the signature is an invariant of trivial boundary cobordism. 
%For a reference on the definition of the signature of manifolds with boundary and their additivity properties see the exposition in \cite[Section 1]{BCRR}. 

%\renee{Do we still want to add last ref?} -- \carmen{No, I don't think so. I've commented it out.}

\begin{lemma}\label{Omegapartial}\itshape
 For every $n \geq 1$, $\Omega^\partial_n$ fits in the following exact sequence of abelian groups
\begin{center}
	$\begin{CD}
	0 @>>> F_{n} @> i >> \Omega_{n} @>\alpha>{[M] \mapsto [M]}> \Omega^\partial_{n} @>\beta>{[N] \mapsto [\partial N]}> C_{n-1} @>>>0
	\end{CD}.$
\end{center} 
Here $C_{n-1}$ is the group completion of the monoid of orientation preserving diffeomorphism classes of  bounding $(n-1)$-manifolds under disjoint union, which is a free abelian group, and $F_{n}$ is the subgroup of $\Omega_{n}$ generated by mapping tori. \end{lemma}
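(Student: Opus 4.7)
The plan is to verify the exact sequence at each of its four nontrivial positions, dispatching the routine formal checks first and then focusing on the relation $\ker\alpha = F_n$, which I expect to be the main obstacle.

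For the formal set-up: the map $\alpha$ is well-defined because any cobordism between closed manifolds is automatically a trivial-boundary cobordism with empty cylindrical piece; $\beta$ is well-defined because the cylindrical piece of a trivial-boundary cobordism from $N$ to $N'$ realises the required diffeomorphism $\partial N \cong \partial N'$, which is bounding as $\partial N = \partial N$. The composite $\beta \circ \alpha$ vanishes because closed manifolds have empty boundary; $\beta$ is surjective since every generator $[X] \in C_{n-1}$ arises as $\beta([Y])$ for any filling $Y$ of $X$; and $C_{n-1}$ is free abelian as the group completion of the free commutative monoid on connected bounding $(n-1)$-manifolds.

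For exactness at $\Omega^\partial_n$: given $[N_1]-[N_2] \in \ker\beta$, the freeness of $C_{n-1}$ forces $\partial N_1 \cong \partial N_2$ as oriented manifolds (after stabilising by common bounding summands). Setting $M := N_1 \cup_\partial \bar{N_2}$, I would construct a trivial-boundary cobordism $W = (N_1 \times I) \cup_\phi (\bar{N_2} \times I)$ realising $[N_1] = [N_2] + \alpha([M])$ by gluing only the upper halves $\partial N_1 \times [\tfrac12,1]$ of the two cylindrical sides via the identification $\partial N_1 \cong \partial N_2$; after smoothing corners, the top face is $M$, the bottom is $N_1 \sqcup \bar{N_2}$, and the two unglued lower halves assemble into a single cylinder $\partial N_1 \times I$. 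For the inclusion $F_n \subset \ker\alpha$, I would exhibit each mapping torus $T_\phi$ as stably trivial-boundary null-bordant using an $F$-bundle over a pair of pants (with monodromies $\phi$, $\phi^{-1}$, $\id$ on the three boundary circles), combined with the standard null-bordism $F \times D^2$ of $T_{\id}= F \times S^1$, thickened to produce the required cylindrical-boundary structure with auxiliary manifold $P = F \times I$.

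The main obstacle I anticipate is the reverse inclusion $\ker\alpha \subset F_n$. Given a trivial-boundary cobordism $W$ with $\partial W = M \sqcup P \sqcup \bar P \sqcup (\partial P \times I)$ and $M$ closed, one cannot in general close off by capping with $P \times I$: the two ends of the cylindrical piece attach to the $P$- and $\bar P$-faces via identifications whose composition $\phi \in \mathrm{Diff}^+(\partial P)$ need not extend to a diffeomorphism of $P$. When $\phi$ does extend, the capping yields an honest null-bordism of $M$; in general the extension obstruction is encoded by the mapping torus of $\phi$ acting on $\partial P$, and the plan is to show that $[M]$ coincides with this mapping-torus class in $\Omega_n$, which puts $[M] \in F_n$. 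Making this precise requires a corner-aware analysis of $W$ — cutting along a collar of the cylindrical boundary and reassembling the pieces as a standard null-bordism of $M$ together with a mapping torus — following the spirit of the \cite{KKNO} argument but adapted to the trivial-boundary setting.
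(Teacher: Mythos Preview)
Your treatment of well-definedness, surjectivity of $\beta$, and exactness at $\Omega^\partial_n$ matches the paper: your half-cylinder gluing is exactly the cobordism the paper pictures, giving the relation $[M] = [M\cup_\phi \bar N] + [N]$.

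For exactness at $\Omega_n$ the paper proceeds quite differently on both inclusions. For $F_n \subseteq \ker\alpha$ it does not build a trivial-boundary null-bordism of a mapping torus directly; instead it specialises the relation above to $M = N$, obtaining $[N\cup_\phi \bar N] + [N] = [N]$, so every twisted double already vanishes in $\Omega^\partial_n$. Winkelnkemper's theorem that any mapping torus is cobordant in $\Omega_n$ to a twisted double then finishes this direction. Your pair-of-pants $F$-bundle with monodromies $\phi,\phi^{-1},\id$ only produces a \emph{closed} cobordism witnessing $[T_\phi] + [T_{\phi^{-1}}] = [T_{\id}]$ in $\Omega_n$; since $T_{\phi^{-1}} \cong \bar T_\phi$ and $T_{\id}$ bounds, this relation is vacuous, and the ``thickening with $P = F\times I$'' that would be needed to upgrade it to a trivial-boundary statement is not explained.

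For the reverse inclusion $\ker\alpha \subseteq F_n$ --- the step you correctly flag as the crux --- the paper bypasses geometry entirely. Novikov additivity (recorded just before the lemma) shows that the signature of a compact oriented manifold with boundary is invariant under trivial-boundary cobordism, hence descends to a homomorphism on $\Omega^\partial_n$. A closed $M$ with $\alpha([M]) = 0$ therefore has $\sigma(M) = 0$, and \cite[Theorem~1.3]{KKNO} identifies the signature-zero classes in $\Omega_n$ with $F_n$. Your proposed dissection of $W$ --- gluing in $P\times I$ along the two $P$-faces and reading off the residual twist on $\partial P$ as a mapping torus cobordant to $M$ --- is plausible and can likely be made rigorous, but it is a substantially longer route than the two-line invariant argument, and your sketch does not yet pin down why the glued object is a smooth manifold or why the leftover boundary component is exactly a single mapping torus.
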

\begin{proof}

Two closed manifolds that are cobordant in $\Omega_{n}$ stay
cobordant in $\Omega^\partial_{n}$, hence the map $\alpha$ taking a cobordism class of closed manifolds  to the class containing the same manifiolds in $\Omega^\partial_{n}$ is well-defined. The equivalence relation in $\Omega^\partial_{n}$ preserves the diffeomorphism class of the boundary and therefore the map $\beta$ taking a class of manifolds to the diffeomorphism class of the boundary is a well-defined map.

We show exactness at $\Omega_n$.
A manifold that can be written as a twisted double $M \cup_{\phi} \bar{M}$ is zero in $\Omega^\partial_{n}$ as we have the following relation:
  \[
[M \cup_{\phi} \bar{M}] +   
[M]= [M], \]
as illustrated in Figure \ref{fig:twisted_double} if we set $N$ to equal $M$.
It was shown in \cite{Winkelnkemper} that any twisted double $M \cup_{\phi} \bar{M}$ is canonically cobordant to the mapping torus of $\phi$ 
hence $F_n$ is in the kernel of $\alpha$. 
Consider $[M]$ an element in $\ker \alpha$, where $M$ is a closed manifold.
The signature for manifolds with boundary restricts to the ordinary signature on closed manifolds and therefore $[M] \in \mathrm{Gr}(\mathcal{M}_n)$ has signature zero.
It follows that $[M]$ lies in the image of $F_n$ by \cite[Theorem 1.3]{KKNO}.

%Exactness at $\beta \circ \alpha$.
Every class in $C_{n-1}$ is the boundary of an $n$-dimensional manifold hence $\widetilde{\beta}:\,\widetilde{\Omega}^\partial_{n} \rightarrow \widetilde{C}_{n-1}$ is surjective, therefore so is the corresponding map on group completions which is $\beta$. 
We show exactness at $\Omega_n^\partial$. It is clear that $\im ~  \alpha \subseteq \ker \beta$. Let $x \in \ker \beta$. We can write $x= [M]-[N],$ where $M, N$ are compact smooth oriented $n$-manifolds with boundary. 
As $\beta(x)= 0$, we have that $\partial M$ and $\partial N$ are diffeomorphic. Choose a diffeomorphism $\phi \colon \partial M \to \partial N.$  Let us view a cylinder on $M \cup _{\phi} \bar{N}$ as a cobordism from $M \cup _{\phi} \bar{N}$ to itself. Consider the outgoing copy of $M \cup _{\phi} \bar{N}$ and choose a one-sided collar in $M$ of the glued boundary $\partial M$. This gives the decomposition of the outgoing boundary of the cylinder as $M \cup (\partial M \times I) \cup _{\phi} \bar{N} $. Finally, this can be viewed as a trivial boundary cobordism from $(M \cup _{\phi} \bar{N}) \bigsqcup N$  to $M$, where the collar is used to produce the boundary piece of cobordism diffeomorphic to $\partial M \times I$. For illustration see again Figure \ref{fig:twisted_double}.
The following relation holds in $\widetilde{\Omega}^\partial_{n}$:
  \[
[M]=[M \cup _{\phi} \bar{N}] + [N]. \]

\begin{figure}[ht]
\hspace{2cm}
\adjustbox{scale=0.45}{

\tikzset{every picture/.style={line width=0.75pt}} %set default line width to 0.75pt        

\begin{tikzpicture}[x=0.75pt,y=0.75pt,yscale=-1,xscale=1]
%uncomment if require: \path (0,768); %set diagram left start at 0, and has height of 768

%Image [id:dp1608934203229897] 
\draw (386.5,238.96) node  {\includegraphics[width=376.5pt,height=268.43pt]{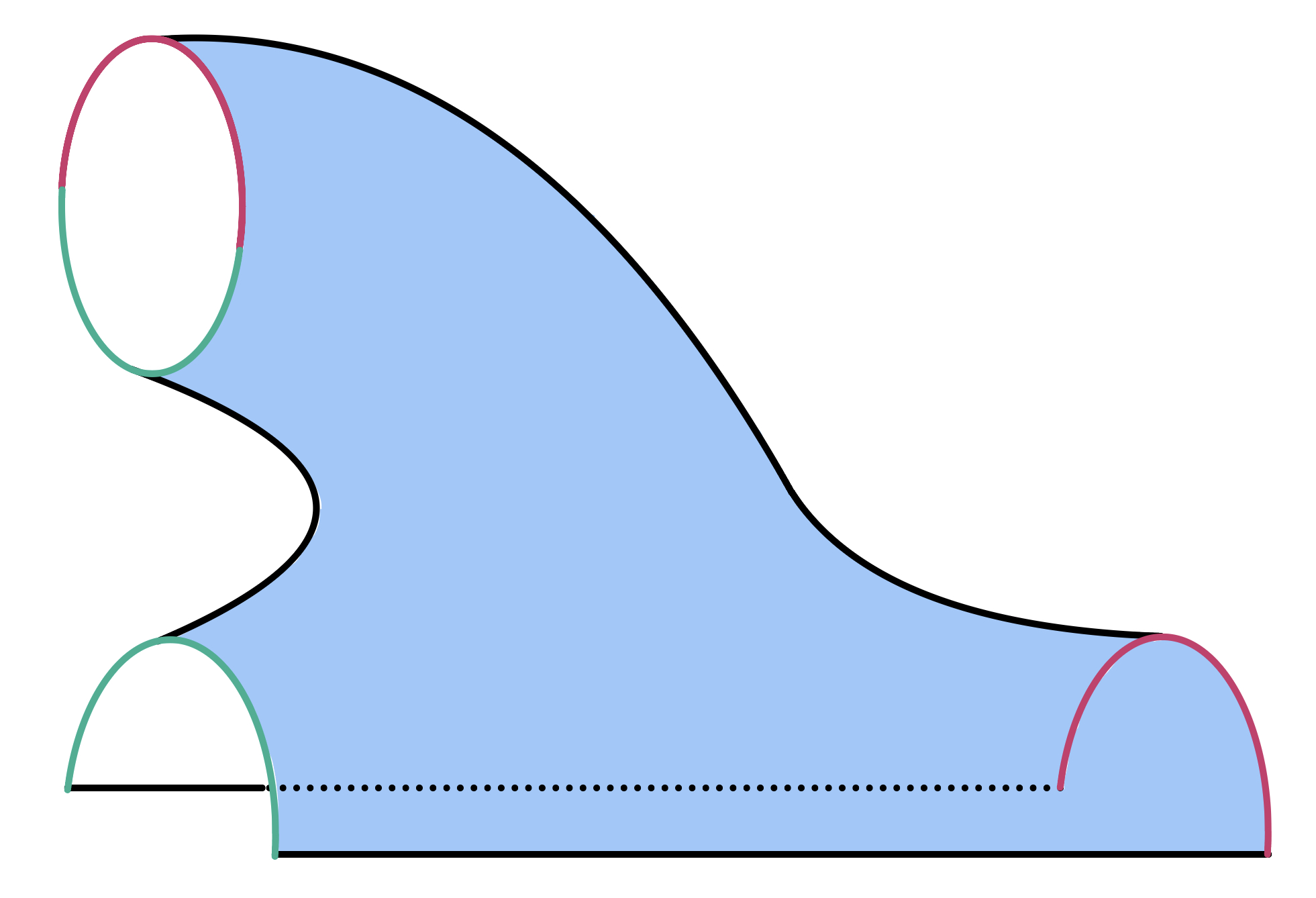}};
%Straight Lines [id:da8707059369801142] 
\draw    (138,129) -- (137.53,160) ;
\draw [shift={(137.5,162)}, rotate = 270.87] [color={rgb, 255:red, 0; green, 0; blue, 0 }  ][line width=0.75]    (10.93,-3.29) .. controls (6.95,-1.4) and (3.31,-0.3) .. (0,0) .. controls (3.31,0.3) and (6.95,1.4) .. (10.93,3.29)   ;
%Straight Lines [id:da026803798458842065] 
\draw    (398,418) -- (407.5,418) -- (428.5,418) ;
\draw [shift={(430.5,418)}, rotate = 180] [color={rgb, 255:red, 0; green, 0; blue, 0 }  ][line width=0.75]    (10.93,-3.29) .. controls (6.95,-1.4) and (3.31,-0.3) .. (0,0) .. controls (3.31,0.3) and (6.95,1.4) .. (10.93,3.29)   ;

% Text Node
\draw (124.25,88.99) node [anchor=north west][inner sep=0.75pt]  [font=\LARGE] [align=left] {$\displaystyle M$};
% Text Node
\draw (125.25,171.99) node [anchor=north west][inner sep=0.75pt]  [font=\LARGE] [align=left] {$\displaystyle \overline{N}$};
% Text Node
\draw (115.25,127.99) node [anchor=north west][inner sep=0.75pt]  [font=\Large] [align=left] {$\displaystyle {\textstyle \phi }$};
% Text Node
\draw (403.25,420.99) node [anchor=north west][inner sep=0.75pt]  [font=\Large] [align=left] {$\displaystyle {\textstyle \phi }$};
% Text Node
\draw (627.25,344.99) node [anchor=north west][inner sep=0.75pt]  [font=\LARGE] [align=left] {$\displaystyle M$};
% Text Node
\draw (129.25,334.99) node [anchor=north west][inner sep=0.75pt]  [font=\LARGE] [align=left] {$\displaystyle N$};

\end{tikzpicture}

}
\caption{Relation in $\Omega^{\partial}$}
\label{fig:twisted_double}
\end{figure}

Therefore in $\Omega^\partial_{n}$ we have:
  \[
x=[M]-[N]=[M\cup_{\phi} \bar{N}] \in \im \alpha.\]

\end{proof}

\subsection{$\SKbar$ groups of manifolds with boundary}

We first recall the definition of $\SK^\d$-equivalence from \cite{SKpaper}. A cut and paste operation for manifolds with boundary is  defined as follows: we cut an $n$-dimensional manifold $M$ along a codimension 1 smooth separating submanifold  $\Sigma$ with trivial normal bundle that is disjoint from the boundary of $M$.
We paste back the two pieces together along an orientation preserving diffeomorphism of $\Sigma$. In particular all the cutting boundaries are pasted back together, and the existing boundary of $M$ is left unchanged. We call two manifolds with boundary $\SK^\d$-equivalent if they can be obtained from one another through a finite sequence of cut and paste operations.

In other words, given compact oriented manifolds $M_1, M_2$, closed submanifolds $\Sigma \subseteq \partial M_1$ and $\Sigma' \subseteq \partial M_2$, and orientation preserving diffeomorphisms $\phi, \psi \colon \Sigma \to \Sigma'$, we set
\[ M_1 \cup_{\phi} \bar{M_2} \sim_{\SK^\d} M_1 \cup_{\psi} \bar{M_2}.\]

\begin{definition}
 The \emph{cobordism cut and paste group of manifolds with boundary} $\SKbar^\partial_n$ is defined as the Grothendieck group of diffeomorphism classes of $n$-dimensional manifolds with boundary where addition is given by disjoint union, $Gr(\mathcal{M}^\d_n)$, quotiented by the relation of cobordism with trivial boundary as well as the relation of cutting and pasting of manifolds with boundary. 
\end{definition}

\begin{lemma}\label{SKbardelta}\itshape
The cobordism cut and paste group of manifolds with boundary $\SKbar^\partial_n$ is isomorphic to the trivial boundary cobordism group $\Omega^\partial_n$.
\end{lemma}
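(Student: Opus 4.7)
The plan is to prove the isomorphism by showing that the canonical quotient map $\Omega^\partial_n \twoheadrightarrow \SKbar^\partial_n$ (which further imposes the $\SK^\partial$-relation) is injective; surjectivity is automatic from the construction. Equivalently, I must show that any two $\SK^\partial$-equivalent manifolds with boundary already represent the same class in $\Omega^\partial_n$. Since $\SK^\partial$-equivalence is generated by a single cut-and-paste move, it suffices to consider $M = M_1 \cup_\phi \bar M_2$ and $M' = M_1 \cup_\psi \bar M_2$ for two orientation-preserving diffeomorphisms $\phi, \psi \colon \Sigma \to \Sigma$ of the separating hypersurface, and to prove $[M] = [M']$ in $\Omega^\partial_n$.

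The key reduction converts this difference into a closed-manifold question. Since $\partial M = \partial M'$, I apply the relation from the proof of \autoref{Omegapartial} (illustrated in Figure~\ref{fig:twisted_double}) to the pair $(M, M')$ with the identity gluing on the common boundary. This yields
\[
[M] - [M'] = [Z] \quad \text{in } \Omega^\partial_n,
\]
where $Z := M \cup_{\mathrm{id}_{\partial M}} \bar{M'}$ is a closed oriented $n$-manifold. By \autoref{Omegapartial}, the kernel of $\alpha \colon \Omega_n \to \Omega^\partial_n$ is precisely $F_n$, and by \cite[Theorem 1.3]{KKNO} this $F_n$ coincides with the kernel of the classical quotient $\Omega_n \twoheadrightarrow \SKbar_n$. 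Hence it is enough to show $[Z] = 0$ in $\SKbar_n$.

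To verify this, I decompose $Z$ into its four constituent pieces $M_1, \bar M_2, \bar M_1, M_2$, glued along two $\Sigma$-interfaces (one arising from $\phi$ in $M$, the other induced by $\psi$ after the orientation reversal giving $\bar{M'}$) and along $B_1, B_2$ via the identity. A single $\SK$-move on $Z$, cutting along the $\phi$-interface and re-gluing instead by $\psi$, replaces the $\phi$-gluing with a $\psi$-gluing and produces the untwisted double $D(M') := M' \cup_{\mathrm{id}_{\partial M'}} \bar{M'}$. The double $D(M')$ bounds $M' \times I$ (with corners smoothed in the standard way), so $[D(M')] = 0$ in $\Omega_n$, and hence in $\SKbar_n$. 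Combined with $Z \sim_{\SK} D(M')$ this gives $[Z] = 0$ in $\SKbar_n$, which pulls back via the exact sequence of \autoref{Omegapartial} to $[Z] = 0$ in $\Omega^\partial_n$, and therefore $[M] = [M']$.

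The main obstacle is the orientation bookkeeping required to identify $Z$ as the claimed closed manifold assembled from $M_1, \bar M_1, M_2, \bar M_2$, and to verify that the single $\SK$-move on the $\phi$-interface really produces the untwisted double $D(M')$ rather than some twisted variant. Once this identification is in place, the null-bordism of $D(M')$ via $M' \times I$ and the exact sequence of \autoref{Omegapartial} combine to finish the argument.
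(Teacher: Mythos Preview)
Your argument is correct and takes a genuinely different route from the paper. The paper proves the lemma by a five-lemma argument: it sets up a map of short exact sequences with $\Omega_n/F_n \hookrightarrow \Omega^\partial_n \twoheadrightarrow C_{n-1}$ on top and $\SKbar_n \to \SKbar^\partial_n \to C_{n-1}$ on the bottom, verifies exactness of the bottom row (using the signature of manifolds with boundary as a splitting for $\alpha'$), and concludes. Your approach instead shows directly that every $\SK^\partial$-relation already vanishes in $\Omega^\partial_n$: you convert $[M]-[M']$ into the class of the closed manifold $Z = M \cup_{\id} \bar{M'}$, invoke $\ker(\Omega_n \to \Omega^\partial_n) = F_n = \ker(\Omega_n \to \SKbar_n)$, and then exhibit $Z \sim_{\SK} D(M')$ with $D(M')$ null-bordant. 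Both proofs ultimately lean on \autoref{Omegapartial} and on the classical identification of $F_n$ from \cite{KKNO}; the paper's version packages everything into a diagram chase and gives the bottom exact sequence as a byproduct, while yours is more hands-on and makes transparent \emph{why} a single cut-and-paste move is a trivial boundary cobordism. The orientation bookkeeping you flag is indeed routine once one tracks that the $\id_{\partial M}$-gluing restricts to the identity on the $B_1$- and $B_2$-pieces, so that after re-gluing the $\phi$-interface by $\psi$ one really obtains the untwisted double.
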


\begin{proof}

Consider the following diagram
	\[ \begin{tikzcd}
	\Omega_n/F_n \arrow[r, hook,"\bar{\alpha}"]
	\arrow[d,"\gamma", "\cong"'] & \Omega^\partial_n \arrow[r, two heads, "\beta"]\arrow[d, two heads] & C_{n-1} \arrow{d}{\id} \\
	\SKbar_n \arrow{r}{\alpha'} & \SKbar^\partial_n \arrow{r}{\beta'} & C_{n-1},
	\end{tikzcd}
	\]
where $\alpha'$ sends a cobordism cut and paste class of closed manifolds to their class as manifolds with boundary, and $\beta'$ takes a class of manifolds to their boundary, which is preserved up to diffeomorphism by both cut and paste and cobordism relations. 
The map $\gamma$ is the canonical isomorphism coming from Equation (\ref{eqn:twoSES-1}) and the map from $\Omega^\partial_n$ to $\SKbar^\partial_n$ is the canonical surjection. Both squares clearly commute.

%Exactness.
The top line is a short exact sequence by Lemma \ref{Omegapartial}. 
As the cut and paste relation for manifolds with boundary preserves signature of manifolds with boundary, the signature map is a splitting for $\alpha'$, as it is for $\bar{\alpha}$. Hence $\alpha'$ is injective. Both the cut and paste and trivial boundary cobordism relations preserve the boundary up to diffeomorphism hence $\beta'$ is surjective. 
Let $x$ be an element in the kernel of $\beta'$, and $\bar{x}$ a preimage of $x$ in $\Omega_n^\partial$. Then $\bar{x}$ is in the kernel of $\beta$ hence we can find an element $y$ in $\Omega_n/F_n$ that maps to it. Hence $x=\alpha'(\gamma(y))$, so the lower sequence is exact. 
By the five lemma, the middle groups are isomorphic.
\end{proof}

\subsection{Fibrations of cobordism categories} \label{section:fibrations}

We can also consider trivial boundary cobordism from the point of view of
a fibre sequence of cobordism categories that includes into Genauer's fibre sequence for the cobordism category of manifolds with boundary \cite{GMTW,genauer2012cobordism}. We will make this precise in the Proposition below. Let $\Cob_{n+1}^{\partial_\text{all}}$ be the weakly unital topological cobordism category with boundary, with objects $n$-dimensional manifolds with boundary and morphisms given by $(n+1)$-dimensional cobordisms with boundary, as defined in \cite{steimle2021additivity}.
As a subcategory of $\Cob_{n+1}^{\partial_\text{all}}$, we consider the weakly unital category $\Cob_{n+1}^{\partial_\text{triv}}$ of $n$-dimensional manifolds with boundary, with morphisms given by $(n+1)$-dimensional cobordisms that are trivial on the boundary of the objects, meaning that in the cobordism direction, their boundary is diffeomorphic to $\partial M \times [0,1]$ if $M$ is the source (or equivalently target) object.
The boundary map from $\Cob_{n+1}^{\partial_\text{all}}$ to $\Cob_{n}$ in \cite{steimle2021additivity} restricted to $\Cob_{n+1}^{\partial_\text{triv}}$ then has image given by the subcategory $\Map^\varnothing_{n}$ defined below.

\begin{definition}
Let $\partial M_0$ and $\partial M_1$ be nullbordant $(n-1)$-dimensional manifolds. We call a trivial cobordism (a cobordism that is diffeomorphic to a cylinder) from $\partial M_0$ to $\partial M_1$ \emph{extendable} if there are nullbordisms $M_0, M_1$ of $\partial M_0$ and $\partial M_1$ respectively, and an $(n+1)$-dimensional trivial boundary cobordism $W$ with $M_0 = \partial W_{in}, M_1 = \partial W_{out}$, where $ \partial W_{in}$ is the incoming boundary and $\partial W_{out}$ is the outgoing boundary of $W$ (see Figure \ref{Fig:trivial cob}).
\end{definition}
The category $\Map^\varnothing_{n}$ has objects $(n-1)$-dimensional nullbordant manifolds and morphisms $n$-dimensional extendable trivial cobordisms.
This is a subcategory of $\Cob^\varnothing_{n}$, the full subcategory on bounding manifolds inside $\Cob_{n}$.

\begin{prop}\label{cobcats}\itshape
The following diagram is a pullback diagram of weakly unital, locally fibrant topological categories and weakly unital continuous functors
\[ \begin{tikzcd}
\Cob_{n+1}^{\partial_\text{triv}}  \arrow[r,  "\partial"]\arrow[d] & \Map^\varnothing_{n}
\arrow{d}{} \\
\Cob_{n+1}^{\partial_\text{all}} \arrow{r}{\partial}  & \Cob^\varnothing_{n}.
	\end{tikzcd}\]
\end{prop}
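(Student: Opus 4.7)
The plan is to verify the pullback property by exhibiting a canonical isomorphism of $\Cob_{n+1}^{\partial_\text{triv}}$ with the pullback. In the category of weakly unital locally fibrant topological categories, pullbacks are computed levelwise on objects and on each morphism space, so it suffices to verify that the comparison functor
\[
\Phi \colon \Cob_{n+1}^{\partial_\text{triv}} \longrightarrow \Cob_{n+1}^{\partial_\text{all}} \times_{\Cob^\varnothing_n} \Map^\varnothing_n,
\]
induced by the inclusion $\Cob_{n+1}^{\partial_\text{triv}}\hookrightarrow \Cob_{n+1}^{\partial_\text{all}}$ and the boundary functor $\partial$, is a bijection on objects and a homeomorphism on each morphism space.

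On objects: an element of the pullback is a pair $(M,N)$ with $M$ an $n$-manifold with boundary and $N$ a closed bounding $(n-1)$-manifold such that $\partial M = N$. Since every such $M$ nullbords its own boundary, the assignment $M \mapsto (M,\partial M)$ is a natural bijection with the common object set of $\Cob_{n+1}^{\partial_\text{triv}}$ and $\Cob_{n+1}^{\partial_\text{all}}$. On morphism spaces from $M_0$ to $M_1$: an element of the pullback is a pair $(W,V)$ where $W$ lies in $\Cob_{n+1}^{\partial_\text{all}}(M_0,M_1)$ and $V$ is an extendable trivial $n$-cobordism from $\partial M_0$ to $\partial M_1$ with $\partial W = V$. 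Since the hypothesis $V \in \Map^\varnothing_n$ requires $V$ to be a cylinder, the equality $\partial W = V$ forces $W$ to have cylindrical free boundary, i.e.\ $W$ is itself a trivial boundary cobordism and thus a morphism in $\Cob_{n+1}^{\partial_\text{triv}}$. The extendability condition on $V$ is automatic, witnessed by $W$ itself. This establishes the inverse $W \mapsto (W,\partial W)$ to the first-factor projection.

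The main obstacle is the topological verification---showing the bijection on morphism spaces is a homeomorphism and that weak unitality and local fibrancy survive the pullback. The morphism spaces of all four categories in the diagram are subspaces of a common space of neat $(n+1)$-submanifolds of $\mathbb{R}^\infty \times [0,1]$, in the sense of \cite{steimle2021additivity}, and the condition of having cylindrical free boundary is a closed condition on this space. Since $\partial$ is continuous, the map $W \mapsto (W,\partial W)$ is a continuous bijection onto the pullback subspace whose inverse is the (continuous) projection onto the first factor, hence a homeomorphism. For weak unitality, the weak units of $\Cob_{n+1}^{\partial_\text{all}}$ and $\Map^\varnothing_n$ project to the same cylindrical weak units in $\Cob^\varnothing_n$, so the pullback inherits weak units, which then agree with those of $\Cob_{n+1}^{\partial_\text{triv}}$. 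Local fibrancy is preserved because Serre fibrancy of the source/target maps is stable under pullback along continuous functors of topological categories.
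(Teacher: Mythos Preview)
Your approach is essentially the same as the paper's: both verify the pullback levelwise, first on objects (where both vertical maps are identities and the check is trivial) and then on morphism spaces between fixed $M_0,M_1$. Your homeomorphism argument via the continuous first-factor projection as inverse is a bit more explicit than the paper's, which simply asserts the morphism diagram is a pullback in topological spaces.

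One small gap: the statement asserts all four categories are weakly unital and locally fibrant, and you only argue these properties for the top-left corner as inherited from the pullback. You implicitly assume $\Map^\varnothing_n$ already enjoys both properties, but this category is newly introduced here and needs a word of justification. The paper handles this directly: the morphism space of $\Map^\varnothing_n$ is a union of connected components of that of $\Cob^\varnothing_n$ (being diffeomorphic to a cylinder and extendable are component-wise conditions), so local fibrancy is inherited; weak unitality is immediate since cylinders over nullbordant manifolds are extendable trivial cobordisms. Once you add this observation, your argument is complete and matches the paper's.
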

\begin{proof}
First note that $\Map^\varnothing_{n}$ is weakly unital in the sense of \cite{steimle2021additivity} section 2, and the inclusion into $\Cob^\varnothing_{n}$ is weakly unital continuous.
The morphism space of the category $\Map^\varnothing_{n}$ consists of a union of connected components of the morphism space of $\Cob^\varnothing_{n}$, hence it is locally fibrant in the sense of \cite{steimle2021additivity} as $\Cob_{n}$ is.

On the level of objects, we have the following diagram which is clearly a pullback:
\begin{center}
    	\begin{tikzcd}
    		\{ \text{$n$-dim mfds with boundary}\} \arrow[rr, "\d"] \arrow[dd, "\id"] &&  	\{ \text{$(n-1)$-dim closed nullbordant mfds}\}\arrow[dd, "\id"]\\
    		&&\\
    		\{ \text{$n$-dim mfds with boundary}\} \arrow[rr, "\d"] && \{ \text{$(n-1)$-dim closed nullbordant mfds}\}.
    	\end{tikzcd}
\end{center}
    	The morphism spaces of the above diagram of categories also form a pullback diagram. It is enough to check the pullback property for the morphisms spaces between any two ($n+1$)-dimensional manifolds $M$  and $M'$. In this case, the diagram is:
 \begin{center}
\begin{tikzcd}
    	\parbox{6cm}{Space of cobordisms between $M$ and $M'$ that have boundary diffeomorphic to  $\d M \times [0,1]$}
    		 \arrow[rr, "\d"] \arrow[dd, "incl"] &&
    		 \parbox{5cm}{Space of extendable trivial cobordisms from $\d M$ to $\d M'$}	
    		 \arrow[dd, "incl"]\\
    		& &\\
    		\parbox{6cm}{Space of cobordisms with boundary (in Genauer's sense) between $M$ and $M'$ } \arrow[rr, "\d"] && \parbox{5cm}{Space of cobordisms from $\d M$ to $\d M'$}
    	\end{tikzcd}.
\end{center}
This is indeed a pullback diagram in the category of topological spaces.
\end{proof}

We apply Theorem 2.3 in \cite{steimle2021additivity} to the pullback square of categories in Prop. \ref{cobcats}. By Lemma 4.4 in \cite{steimle2021additivity}, the boundary map from $\Cob_{n+1}^{\partial_\text{all}}$ to $\Cob_{n}$ is a level cartesian and cocartesian fibration. 
We hence obtain a homotopy pullback square of classifying spaces
	\[ \begin{tikzcd}
B\Cob_{n+1}^{\partial_\text{triv}}  \arrow[r,  "B\partial"]\arrow[d] & B\Map^\varnothing_{n}
\arrow{d}{} \\
B\Cob_{n+1}^{\partial_\text{all}} \arrow{r}{B\partial}  & B\Cob^\varnothing_{n}.
	\end{tikzcd}\]
The lower line of this diagram is the well-known Genauer fibration with fibre $B\Cob_{n+1}$, therefore the upper line is also a fibration with homotopy equivalent fibre: 
\begin{center}
	$\begin{CD}
B\Cob_{n+1} @>>> B\Cob_{n+1}^{\partial_\text{triv}} @>>> B\Map^\varnothing_{n}
	\end{CD}.$
\end{center} 
	By construction this fibration is surjective.

The fibration above gives rise to the following long exact sequence in homotopy: 
\[
\pi_1 B\Cob_{n+1} \longrightarrow \pi_1 B\Cob_{n+1}^{\partial_\text{triv}} \longrightarrow
%\longrightarrow\mathrel{\mkern-14mu}\rightarrow
\pi_1 B\Map^\varnothing_{n} \longrightarrow
%\xrightarrow{\hspace{10pt} 0 \hspace{10pt}}
\Omega_{n}\longrightarrow \widetilde{\Omega}^\partial_{n} \longrightarrow 
\widetilde{D}_{n-1}
%\widetilde{C}_{n-1} 
\longrightarrow 0,
\] 
where $\widetilde{D}_{n-1}$ is the monoid of nullbordant oriented $(n-1)$-dimensional manifolds up to extendable trivial cobordism and
$\widetilde{\Omega}^\partial_{n}$ is the trivial boundary cobordism monoid.
On $\pi_0$ we have based the sets at $\varnothing$ to define the exact sequence, and note that the maps are also monoid maps. 
The group $\pi_1 B\Cob_{n+1}$ is also known as the controlled cut and paste group $\SKK_{n+1}$ (see for example \cite{KKNO}). 
It is clear that the map of monoids $\Omega_{n}\longrightarrow \widetilde{\Omega}^\partial_{n}$ is injective hence we have the last three terms as a short exact sequence of monoids.

\section{ Cubes of manifolds with boundary}
%A trisimplicial space of manifolds with boundary
 \label{Xsec}

 In what follows, we will define a trisimplicial set $X^0_{\bullet, \bullet,\bullet}$ of cubes of manifolds. Two of three simplicial directions are defined the same way as in the bisimplicial construction used in the K-theory of manifolds from \cite{SKpaper}. These two directions encode the cut and paste data. 
The third direction corresponds to cobordisms that fit into our description of trivial boundary cobordism (See Section \ref{Sec: cob triv boundary} and Figure \ref{Fig:trivial cob}).

On the faces in the cobordism direction we have a cut and paste square of the cobordisms themselves that extends the cut and paste squares of manifolds on the front and the back of the cube.
The cobordisms with boundary will have collars both in the composition direction (of width $\varepsilon_c\rightarrow 0$) as well as in the trivial cobordism boundary direction (of width $\varepsilon_\partial\rightarrow 0$). 
%These two boundaries will be treated slightly differently.

 Instead of directly giving this trisimplicial set a topology, we add a fourth simplicial direction encoding topology and view the initial trisimplicial set as a value at $[0]$ of a simplicial object in the category of trisimplicial sets. 
%This approach is also used to define the topology on cobordisms, see e.g. 
This approach was introduced in \cite{raptis2017parametrized} to define parametrized cobordism categories and is also applied in \cite[Section 4]{steimle2021additivity}.
%Steimle "An additivity theorem" paper Section 4. 
In the new simplicial direction in level $r$, we will have (0,0,0)-simplices given by fiber bundles $E \to \Delta^r$ with fibers being manifolds with boundary and generic trisimplicial datum described further.

\begin{figure}[htbp]

\hspace{1.5cm}
\adjustbox{scale=0.55}{

\tikzset{every picture/.style={line width=0.75pt}} %set default line width to 0.75pt        

\begin{tikzpicture}[x=0.75pt,y=0.75pt,yscale=-1,xscale=1]
%uncomment if require: \path (0,768); %set diagram left start at 0, and has height of 768

%Image [id:dp520438457959848] 
\draw (335.75,418.5) node  {\includegraphics[width=421.13pt,height=226.5pt]{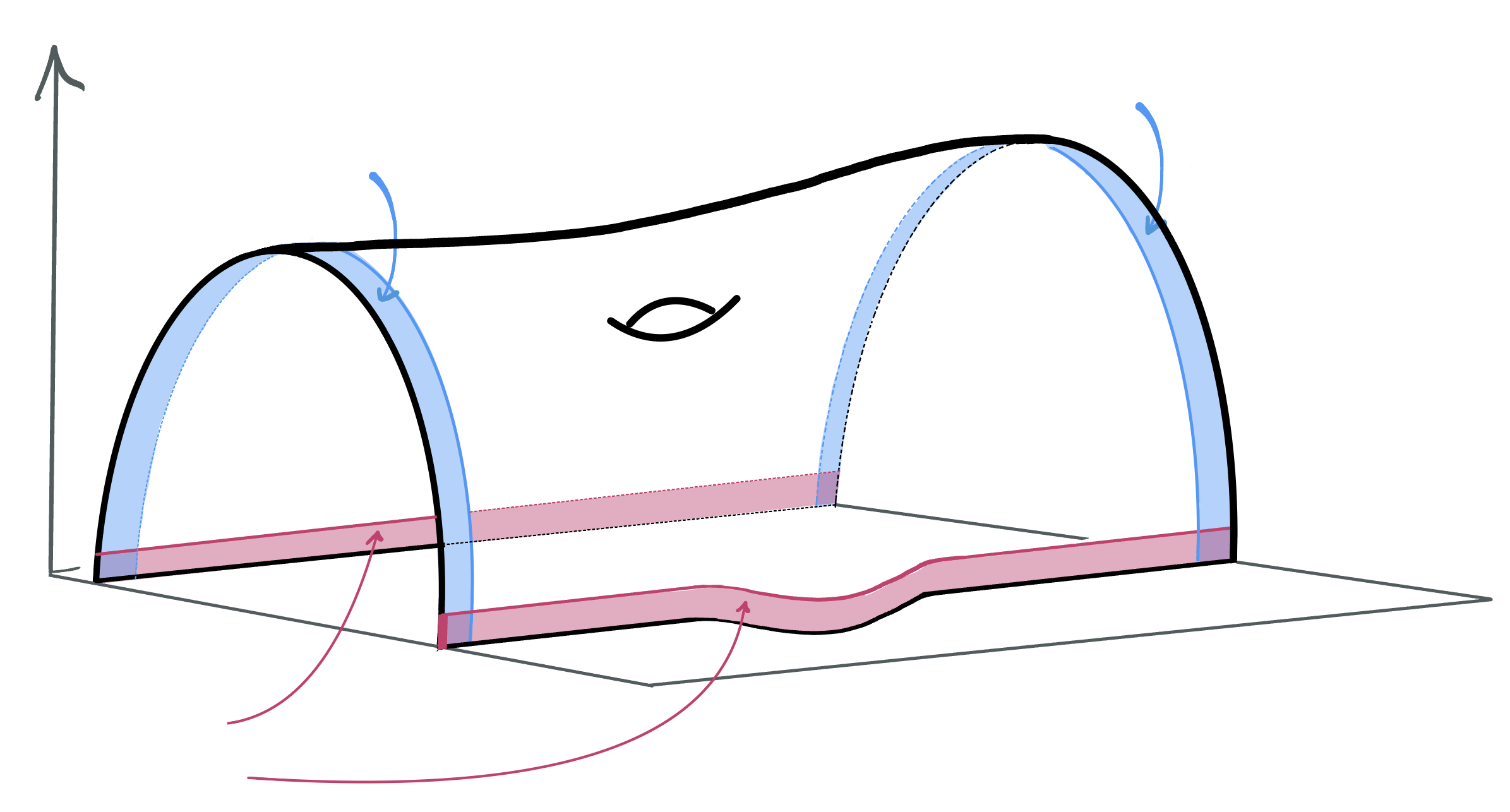}};

% Text Node
\draw (276.25,277.99) node [anchor=north west][inner sep=0.75pt]  [font=\LARGE] [align=left] {$\displaystyle W$};
% Text Node
\draw (93.25,276.99) node [anchor=north west][inner sep=0.75pt]  [font=\large] [align=left] {$\displaystyle \mathbb{R}^{+}$};
% Text Node
\draw (566.25,455.99) node [anchor=north west][inner sep=0.75pt]  [font=\large] [align=left] {$\displaystyle \mathbb{R}^{\infty }$};
% Text Node
\draw (510.25,476.99) node [anchor=north west][inner sep=0.75pt]  [font=\normalsize] [align=left] {$\displaystyle l$};
% Text Node
\draw (214.25,510.99) node [anchor=north west][inner sep=0.75pt]  [font=\normalsize] [align=left] {$\displaystyle 0$};
% Text Node
\draw (151.25,302.99) node [anchor=north west][inner sep=0.75pt]  [font=\large] [align=left] {$\displaystyle \partial W_{i}{}_{n} \times \epsilon _{c}$};
% Text Node
\draw (440.25,275.99) node [anchor=north west][inner sep=0.75pt]  [font=\large] [align=left] {$\displaystyle \partial W_{o}{}_{u}{}_{t} \times \epsilon _{c}$};
% Text Node
\draw (40.25,535.99) node [anchor=north west][inner sep=0.75pt]  [font=\large] [align=left] {$\displaystyle \partial W_{b}{}_{d}{}_{y} \times \epsilon _{\partial }$};

\end{tikzpicture}}
\caption{The $\epsilon_c$ and $\epsilon_{\partial}$ collars on a cobordism with trivial boundary.}
\label{Fig:trivial cob}
\end{figure}

\subsection{The  simplicial set $X^*_{0,0,0}$} of manifolds with boundary
% \rout{space $X_{0,0,0}$}

\begin{definition}\label{def:neat}
     Let $M$ be a smooth oriented submanifold of $\R_+ \times \R^\infty$, with $\partial M = M\cap (\{0\} \times \R^\infty)$. 
    Then we say that $M$ is \emph{neat} (in the sense of \cite{genauer2012cobordism,steimle2021additivity}) if it is cylindrical near the boundary, i.e. there exists an $\varepsilon_\partial >0$ such that \[
M\cap ([0,\varepsilon_\partial)\times \R^\infty) = [0,\varepsilon_\partial)\times \partial M.   
\]
We say an \emph{embedding} $e$ of $\varepsilon_\d$-neat manifold $M$ into $\R_+ \times \R^\infty$ is \textit{$\varepsilon_\d$-neat} if the following diagram commutes
\[ \begin{tikzcd}\,
[0,\varepsilon_\d)  \times \d M \arrow[r, hook, "incl"]\arrow[d, " \id \times  e|_{\d M}"] & M
\arrow[d, "e"] \\ \,
[0, \varepsilon_\d) \times \R^\infty  \arrow[r, hook] &  \R^\infty \times \R_+.
\end{tikzcd}\]
%\rout{Let $\text{Emb}^{\varepsilon_\d}(M, \R_+\times\R^\infty)$ denote the space of  orientation preserving $\varepsilon_\partial$-neat embeddings of $M$ into $\R_+\times\R^\infty$.
%A \emph{diffeomorphism} $f \colon M \to M$ is called \emph{$\varepsilon_\partial$-neat} if it is cylindrical on collars, i.e. it fits into the diagram}
%\[ \begin{tikzcd}\,
%[0,\varepsilon_\d)  \times \d M  \arrow[r, hook]\arrow[d, "\id  \times f|_{\d M}"] & M
%\arrow[d, "f"] \\ \,
%[0,\varepsilon_\d)  \times \d M \arrow[r, hook] & M.
%	\end{tikzcd}\]
%\rout{Let $\Diff^{\varepsilon_\d}(M)$ be the group of all $\varepsilon_\partial$-neat orientation preserving diffeomorphisms.}

\end{definition}

We now define the simplicial set $X^*_{0,0,0}$ to be, on level $*=r$ the set of fibre bundles $E \mapsto \Delta^r$ with fibres being smooth manifolds with boundary, which is fiberwise $\varepsilon_\d$-neatly embedded into $\Delta^r \times \R_+\times\R^\infty$ for some $\varepsilon_\d>0$.

\subsection{ Simplicial sets $X^*_{1,0,0}$ and $X^*_{0,1,0}$} of %topologised 
SK-embeddings

\begin{definition}\label{def:SKemb}
Let $M$ and $N$ be neat $n$-dimensional manifolds. We define a \emph{cut and paste embedding} or \emph{$\SK$-embedding} from $M$ to $N$ to be a smooth orientation-preserving embedding of manifolds (not necessarily neat) that satisfies the following conditions. Each boundary component of $M$ maps either entirely to $\d N$, or entirely to the interior of $N$ as a submanifold with trivial normal bundle.
\end{definition}
%\rout{Let $\CEmb(M,N)$ denote the space of SK-embeddings from $M$ to $N$, with topology derived from the Whitney $C^\infty$-topology.

%We define 
%\[X_{1,0,0} = X_{0,1,0} := 
%\Tcup_{X_{0,0,0}^2} \CEmb(M,N),\]

%where $\Tcup$ denotes the disjoint union topologised over $X_{0,0,0}^2$ in the sense that we endow $X_{1,0,0}$ and $X_{0,1,0}$ with the finest topology such that the inclusions of mapping spaces $\CEmb(M,N)$ as well as the degeneracy map from $X_{0,0,0}$ taking a manifold $M$ to $\id_M$ are continuous. We refer to a topologised disjoint union topology as a T-cup topology.
%The face maps $X_{1,0,0}\rightrightarrows X_{0,0,0}$ and $X_{0,1,0}\rightrightarrows X_{0,0,0}$ that send an embedding to its source and target will still be continuous in the T-cup topology.}
 For $E, E'\in X^r_{0,0,0}$ manifold bundles over $\Delta^r$, let $\CEmb(E,E')$ denote the set of bundle maps from $E$ to $E'$ which are fiberwise SK-embeddings, which we refer to as \emph{bundle $\SK$-embeddings}.
%Define the sets $X^*_{1,0,0}$ and $X^*_{0,1,0}$, to be on level $*=r$ the set of fiber bundle maps over $\Delta^r$ which are fiberwise SK-embeddings, i.e.,
We define the sets $X^r_{1,0,0}$ and $X^r_{0,1,0}$ to be all bundle $\SK$-embeddings between the corresponding source and target, i.e.,
\[X^r_{1,0,0} = X^r_{0,1,0} := 
\bigsqcup_{X_{0,0,0}^r \times X_{0,0,0}^r} \CEmb(E,E').\]

\subsection{The  simplicial set $X^{*}_{0,0,1}$} of cobordisms with trivial boundary
In the third simplicial direction, (0,0,1)-simplices are given by trivial boundary cobordisms. Given two  $\varepsilon_\d$-neat manifolds $M, M' \in X_{0,0,0}$, a  $(0,0,1)$-simplex between them (i.e., a simplex with a source $M$ and target $M'$) is given by a neat cobordism defined as follows. For $l>0$ and $\varepsilon_c, \varepsilon_\d >0$, an \textit{$(\varepsilon_c, \varepsilon_\d)$-neat} cobordism $(l,W)$ from $M$ to $M'$  is an $(n+1)$-dimensional submanifold (with corners) of $[0,l] \times \R_+\times\R^\infty$, with boundary consisting of three pieces $\d W_{in}=M$, $\d W_{out}=M'$, and $\d W_{bdy}$, that are the intersections of $W$ with $\{0\}\times \R_+\times\R^\infty$, $\{l\}\times \R_+\times\R^\infty$, and $[0,l]\times \{0\}\times\R^\infty$ respectively, adhering to the following conditions. 
We have that $W$ is $\varepsilon_c$-cylindrical ($c$ for composition) near the in- and out-components of the boundary, i.e.,
\[
W\cap ([0,\varepsilon_c)\times \R_+\times \R^\infty) =
[0,\varepsilon_c) \times M.   
\]
\[
W\cap ((l-\varepsilon_c, l]\times \R_+\times \R^\infty) =
(l-\varepsilon_c, l] \times M'.   
\]
Moreover $W$ is $\varepsilon_\d$-cylindrical near the $\d W_{bdy}$ component
\[
\tau_{(1 \, 2)} (W\cap ([0, l]\times [0, \varepsilon_\d]\times \R^\infty)) = [0, \varepsilon_\d] \times \d W_{bdy},\]
where $\tau_{(1 \, 2)}$ swaps the first two coordinates.
Lastly, the cobordism is required to be diffeomorphic to a cylinder on $\d W_{bdy}$, i.e.,
$$\partial W_{bdy} = W|_{[0,l]\times\{0\}\times\R^\infty}  \cong [0,l] \times \partial M.$$

We now define the set $\overline{X}^*_{0,0,1}$ to be, on level $*=r$, a map $l:\Delta^r \to  (0, \infty)$ and a fiber bundle over $\Delta^r$ with fibers compact, smooth $n+1$ trivial boundary cobordisms fiberwise $(\varepsilon_c, \varepsilon_\d)$-neatly embedded into $\Delta^r \times [0,l] \times \R_+\times\R^\infty$ for some $\varepsilon_c>0$, $\varepsilon_\d>0$.
%the set of fibre bundles $E \mapsto \Delta^n$ with fibres being smooth manifolds with boundary, which is fiberwise $\varepsilon_\d$-neatly embedded into $\Delta^n \times \R_+\times\R^\infty$ for some $\varepsilon_\d>0$.

%\jout{Let $\Emb^{\varepsilon_c, \varepsilon_\d}(W, [0,l] \times \R_+\times\R^\infty)$ denote the space of \textit{$(\varepsilon_c, \varepsilon_\d)$-neat embeddings} of $W$, defined analogously to before, with topology coming from the Whitney topology. Let $\Diff^{\varepsilon_c, \varepsilon_\d}(W)$ be the group of diffeomorphisms of $W$ that restrict to the product diffeomorphisms on the $\varepsilon_c$- and $\varepsilon_\d$-collars, fixing the in- and outgoing boundaries pointwise while fixing $\d W_{bdy}$ as a set. }

%\rout{ We define 
%\[
%\mathcal{M}^{\varepsilon_c, \varepsilon_\d}(W) :=   \Emb^{\varepsilon_c, \varepsilon_\d}(W, [0,1] \times \R_+\times\R^\infty)
%/\Diff^{\varepsilon_c, \varepsilon_\d}(W)
%\]
%and
%\[
%\mathcal{M}(W) := %\underset{\substack{\varepsilon_\d\rightarrow 0\\\varepsilon_c\rightarrow 0}}\colim   \mathcal{M}^{\varepsilon_c, \varepsilon_\d}(W).
%\]
%We will refer to elements of $\mathcal{M}(W)$ as neat cobordisms.
%We then define}

 We then define
$$X^r_{0,0,1} = X^r_{0,0,0} \; \;\sqcup \;\;\overline{X}^*_{0,0,1}. $$
The first term can be thought of as cobordisms of length zero, which are the image of the degeneracy map from $X_{0,0,0}$ to $X_{0,0,1}$. The second summand encodes cobordism bundles of non-zero length.

%\rout{The second summand encodes the data of a cobordism of an arbitrary length $l$ after rescaling it to the length 1 and keeping track of $l$ in the $(0, \infty)$ term.}

\subsection{The  simplicial set $X^*_{1,1,0}$} of cut and paste squares \label{X_110}

This set is given by the set of cut and paste squares that are distinguished squares of the category with squares $\Mnfldbd_n$ from \cite{SKpaper}. Recall that those squares are given by the following commutative diagrams
\[
       \begin{tikzcd}
    	M_0	\arrow[r, hook]\arrow[d, tail] & M_1 \arrow[d, tail]\\
    	M_2 \arrow[r, hook] & M_1 \cup_{M_0} M_2,
    	\end{tikzcd} 
\]
where maps are $\SK$-embeddings and the diagram is a pushout square in the category of spaces, i.e. the pushout $ M_1 \cup_{M_0} M_2$ is a smooth manifold.

 On the simplicial level $*=r$ the set $X^r_{1,1,0}$ is given by the set of squares of manifold bundles over $\Delta^r$, i.e.  the set of diagrams of total spaces of bundles
 $$
\begin{tikzcd}
E_0	\arrow[r, hook] \arrow[d, tail] & E_1 \arrow[d, tail]\\
E_2 \arrow[r, hook] & E_3,
\end{tikzcd} 
$$
 which over every point of $\Delta^r$ restrict to the distinguished squares of manifolds as above.
%\julia{such that over each point of $\Delta^r$ those restrict to the distinguished squares of manifolds as above.} 

%\jout{of cut and paste squares is a subset of the product of spaces
%$X_{1,0,0}^2 \times X_{0,1,0}^2$ and we endow $X_{1,1,0}$ with the subspace topology.}

\subsection{ Simplicial sets $X^*_{1,0,1}$ and $X^*_{0,1,1}$} of SK-embeddings of cobordisms

Analogously to how we defined SK-embeddings of manifolds with boundary, we now define SK-embeddings of neat cobordisms. 
\begin{definition}
 Let $V$ and $W$ be neat $(n+1)$-cobordisms in $[0,l] \times \R_+\times\R^\infty$. An SK-embedding $F$ from $V$ to $W$ is a smooth orientation preserving embedding such that the following holds. 
 \begin{enumerate}
     \item $F$ sends $\d V_{in}, \d V_{out}$ to $\d W_{in}, \d W_{out}$ correspondingly; the restriction maps $F_{| \d V_{in}}$ are $F_{| \d V_{out}}$ are SK-embeddings of neat manifolds with boundary.
     
     \item $F$ preserves the collars at in- and outgoing boundary components, i.e., $F$ is cylindrical near $\d V_{in}, \d V_{out}$.
     
     \item $F$ sends every component of $\d V_{bdy}$ either entirely to $\d W_{bdy}$ or entirely (apart from the corner points) to the interior of $W$ with the image being a submanifold of $W$ with trivial normal bundle.
 \end{enumerate}

Figure \ref{cob-inclusions} shows an example of an SK-embedding of cobordisms. 

\begin{figure}[htbp]

\hspace{2cm}
\adjustbox{scale=0.75}{

\tikzset{every picture/.style={line width=0.75pt}} %set default line width to 0.75pt        

\begin{tikzpicture}[x=0.75pt,y=0.75pt,yscale=-1,xscale=1]
%uncomment if require: \path (0,768); %set diagram left start at 0, and has height of 768

%Image [id:dp31116351171474665] 
\draw (336.5,407.99) node  {\includegraphics[width=466.5pt,height=143.23pt]{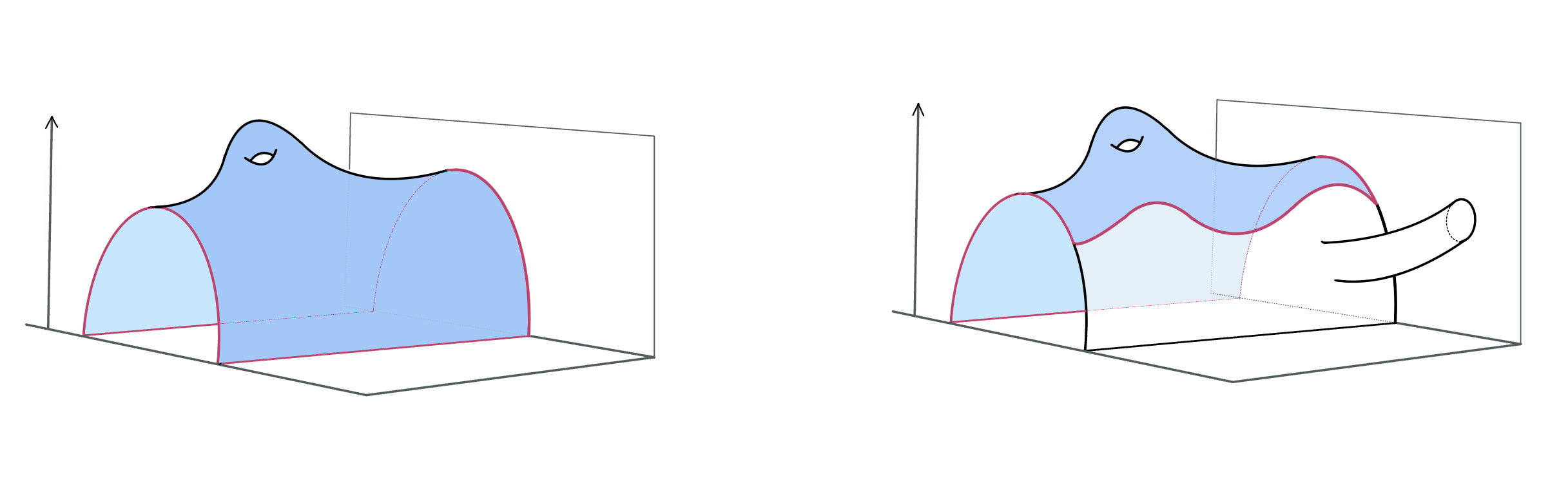}};
%Straight Lines [id:da0012412615874421995] 
\draw    (312.5,416.5) -- (347.5,416.5) ;
\draw [shift={(349.5,416.5)}, rotate = 180] [color={rgb, 255:red, 0; green, 0; blue, 0 }  ][line width=0.75]    (10.93,-3.29) .. controls (6.95,-1.4) and (3.31,-0.3) .. (0,0) .. controls (3.31,0.3) and (6.95,1.4) .. (10.93,3.29)   ;

% Text Node
\draw (49.25,347.99) node [anchor=north west][inner sep=0.75pt]  [font=\small] [align=left] {$\displaystyle \mathbb{R}^{+}$};
% Text Node
\draw (459.25,326.99) node [anchor=north west][inner sep=0.75pt]  [font=\normalsize] [align=left] {$\displaystyle W$};
% Text Node
\draw (118.25,331.99) node [anchor=north west][inner sep=0.75pt]  [font=\normalsize] [align=left] {$\displaystyle V$};
% Text Node
\draw (395.25,347.99) node [anchor=north west][inner sep=0.75pt]  [font=\small] [align=left] {$\displaystyle \mathbb{R}^{+}$};
% Text Node
\draw (109.25,457.99) node [anchor=north west][inner sep=0.75pt]  [font=\footnotesize] [align=left] {$\displaystyle 0$};
% Text Node
\draw (450.25,451.99) node [anchor=north west][inner sep=0.75pt]  [font=\footnotesize] [align=left] {$\displaystyle 0$};
% Text Node
\draw (577.25,440.99) node [anchor=north west][inner sep=0.75pt]  [font=\footnotesize] [align=left] {$\displaystyle l$};
% Text Node
\draw (230.25,447.99) node [anchor=north west][inner sep=0.75pt]  [font=\footnotesize] [align=left] {$\displaystyle l$};
% Text Node
\draw (322.25,392.99) node [anchor=north west][inner sep=0.75pt]  [font=\normalsize] [align=left] {$\displaystyle F$};

\end{tikzpicture}

}
\caption{SK embedding of cobordisms}
\label{cob-inclusions}
\end{figure}

\end{definition}

 For $D,D' \in X^r_{0,0,1}$ trivial cobordism bundles over $\Delta^r$, let $\CEmb(D,D')$ denote the set of bundle maps from $D$ to $D'$ which are fiberwise SK-embeddings of trivial boundary cobordisms, preserving the length $l$ fiberwise.
We define the sets $X^r_{1,0,1}$ and $X^r_{0,1,1}$ to be all bundle $\SK$-embeddings between the corresponding source and target trivial cobordism bundles, i.e.,
\begin{eqnarray}
X^r_{0,1,1} = X^r_{1,0,1} &:= &
\bigsqcup_{E,E' \in X^r_{0,0,0}}
\CEmb(E,E')\nonumber \\
&& \sqcup \;\; \bigsqcup_{D,D' \in X_{0,0,1}^r} \CEmb(D,D').
\nonumber
\end{eqnarray}

\subsection{The  simplicial set $X^*_{1,1,1}$} of cubes of manifolds

We define a cut and paste square of trivial boundary cobordism as 
a quadruple $(F_1, F_2, F_3, F_4)$, where $F_i$ are SK-embeddings of cobordisms of the same length that fit into a commutative square, which is a pushout square on the level of topological spaces
$$\begin{tikzcd}[row sep=scriptsize,column sep=scriptsize]
W_0 \arrow[rr, Rightarrow, "F_1"] \arrow[dd, Rightarrow, "F_2"] && W_1
\arrow[dd, Rightarrow, "F_4"] \\
&&&&\\
W_2 \arrow[rr, Rightarrow, "F_3"] &&  W_1 \cup_{W_0} W_2.
\end{tikzcd}$$
 On the simplicial level $*=r$ the set $X^r_{1,1,1}$ is then given by the set of squares of cobordism bundles over $\Delta^r$, i.e. set of the diagrams 
$$
\begin{tikzcd}
D_0	\arrow[r, hook] \arrow[d, tail] & D_1 \arrow[d, tail]\\
D_2 \arrow[r, hook] & D_3,
\end{tikzcd} 
$$
 such that over each point of $\Delta^r$ this restricts to a cut and paste square of trivial boundary cobordisms as above. Fiberwise we have a cube of manifolds, trivial boundary cobordisms, and $\SK$-embeddings of cobordisms extending the $\SK$-embeddings on manifolds at the in- and outgoing boundary as illustrated below.

\begin{comment}
We define a \emph{cube of manifolds} (element of $X_{1,1,1}$): 
An element in $X_{1,1,1}$ is given by a quadruple $(F_1, F_2, F_3, F_4)$, where $F_i$ are SK-embeddings of cobordisms \renee{bundles}\rout{of the same length (the cobordisms are also part of the data, but we omit writing them)} that fit into a commutative square, which is \renee{fiberwise} a pushout square on the level of topological spaces
$$\begin{tikzcd}[row sep=scriptsize,column sep=scriptsize]
W_0 \arrow[rr, Rightarrow, "F_1"] \arrow[dd, Rightarrow, "F_2"] && W_1
\arrow[dd, Rightarrow, "F_4"] \\
&&&&\\
W_2 \arrow[rr, Rightarrow, "F_3"] &&  W_1 \cup_{W_0} W_2.
\end{tikzcd}$$
%
We think of the cobordisms $W_i$ as elements in $X_{0,0,1}$, embeddings $F_1, F_3$ as an elements in $X_{1,0,1},$ and $F_2, F_4$ as elements in $X_{0,1,1}.$ Then $X_{1,1,1}$ is a subset of $X_{0,1,1}^2 \times X_{1,0,1}^2$ \jout{and we endow it with the subspace topology}.
\end{comment}

\begin{adjustbox}{scale=0.7, center}

\tikzset{every picture/.style={line width=0.75pt}} %set default line width to 0.75pt        

\begin{tikzpicture}[x=0.75pt,y=0.75pt,yscale=-1,xscale=1]
%uncomment if require: \path (0,339); %set diagram left start at 0, and has height of 339

%Straight Lines [id:da9211423154643477] 
\draw    (328.28,72.27) -- (390.77,72) -- (404.48,72.03) -- (426.26,72.06) -- (463.99,72.24)(328.3,75.27) -- (390.77,75) -- (404.47,75.03) -- (426.25,75.06) -- (463.98,75.24) ;
\draw [shift={(471.98,73.77)}, rotate = 180.26] [color={rgb, 255:red, 0; green, 0; blue, 0 }  ][line width=0.75]    (12.02,-3.62) .. controls (7.65,-1.54) and (3.64,-0.33) .. (0,0) .. controls (3.64,0.33) and (7.65,1.54) .. (12.02,3.62)   ;
%Straight Lines [id:da853033886385318] 
\draw    (291.24,128.92) -- (291.22,217.91)(288.24,128.92) -- (288.22,217.91) ;
\draw [shift={(289.71,225.91)}, rotate = 270.02] [color={rgb, 255:red, 0; green, 0; blue, 0 }  ][line width=0.75]    (10.93,-3.29) .. controls (6.95,-1.4) and (3.31,-0.3) .. (0,0) .. controls (3.31,0.3) and (6.95,1.4) .. (10.93,3.29)   ;
%Straight Lines [id:da5993640933441295] 
\draw    (499.54,97.49) -- (501.33,216.27)(496.54,97.54) -- (498.33,216.32) ;
\draw [shift={(499.95,224.3)}, rotate = 269.14] [color={rgb, 255:red, 0; green, 0; blue, 0 }  ][line width=0.75]    (10.93,-3.29) .. controls (6.95,-1.4) and (3.31,-0.3) .. (0,0) .. controls (3.31,0.3) and (6.95,1.4) .. (10.93,3.29)   ;
%Straight Lines [id:da9705231180876894] 
\draw    (380,200) -- (413.5,200) -- (450,200) ;
\draw [shift={(380,200)}, rotate = 0] [color={rgb, 255:red, 0; green, 0; blue, 0 }  ][line width=0.75]      (0,-7.83) .. controls (-2.16,-7.83) and (-3.91,-6.07) .. (-3.91,-3.91) .. controls (-3.91,-1.75) and (-2.16,0) .. (0,0) ;
%Straight Lines [id:da7782317652131483] 
\draw    (466,200) -- (490,200) ;
%Straight Lines [id:da30053506720704815] 
\draw    (509,200) -- (538,200) ;
\draw [shift={(540,200)}, rotate = 180] [color={rgb, 255:red, 0; green, 0; blue, 0 }  ][line width=0.75]    (10.93,-3.29) .. controls (6.95,-1.4) and (3.31,-0.3) .. (0,0) .. controls (3.31,0.3) and (6.95,1.4) .. (10.93,3.29)   ;
%Straight Lines [id:da309655581081445] 
\draw    (462,238.5) -- (474,238.5)(462,241.5) -- (474,241.5) ;
\draw [shift={(482,240)}, rotate = 180] [color={rgb, 255:red, 0; green, 0; blue, 0 }  ][line width=0.75]    (10.93,-3.29) .. controls (6.95,-1.4) and (3.31,-0.3) .. (0,0) .. controls (3.31,0.3) and (6.95,1.4) .. (10.93,3.29)   ;
%Straight Lines [id:da1274353446810823] 
\draw    (323,238.5) -- (452,238.5)(323,241.5) -- (452,241.5) ;
%Straight Lines [id:da03192235453877812] 
\draw    (360,127) -- (360,188) ;
\draw [shift={(360,190)}, rotate = 270] [color={rgb, 255:red, 0; green, 0; blue, 0 }  ][line width=0.75]    (10.93,-3.29) .. controls (6.95,-1.4) and (3.31,-0.3) .. (0,0) .. controls (3.31,0.3) and (6.95,1.4) .. (10.93,3.29)   ;
%Straight Lines [id:da8718094146557049] 
\draw    (360,82) -- (360,114) ;
%Straight Lines [id:da7214986521514708] 
\draw    (360,64) -- (360,68) ;
\draw [shift={(360,64)}, rotate = 270] [color={rgb, 255:red, 0; green, 0; blue, 0 }  ][line width=0.75]    (10.93,-3.29) .. controls (6.95,-1.4) and (3.31,-0.3) .. (0,0) .. controls (3.31,0.3) and (6.95,1.4) .. (10.93,3.29)   ;
%Straight Lines [id:da9360339006696139] 
\draw    (290.5,100) -- (290.5,116)(287.5,100) -- (287.5,116) ;

% Text Node
\draw (238,121) node    {$A$};
% Text Node
\draw (360,39.5) node    {$A'$};
% Text Node
\draw (239.87,300.86) node    {$C$};
% Text Node
\draw (556,39.5) node    {$B'$};
% Text Node
\draw (457.5,120.5) node    {$B$};
% Text Node
\draw (360.5,199.5) node    {$C'$};
% Text Node
\draw (555.5,199.5) node    {$D'$};
% Text Node
\draw (458.27,299.86) node    {$D$};
% Text Node
\draw (266.82,62.34) node [anchor=north west][inner sep=0.75pt]    {$W_{0}$};
% Text Node
\draw (480.2,64.25) node [anchor=north west][inner sep=0.75pt]    {$W_{1}$};
% Text Node
\draw (264.82,236.35) node [anchor=north west][inner sep=0.75pt]    {$W_{2}$};
% Text Node
\draw (521.39,237.64) node [anchor=north west][inner sep=0.75pt]    {$W_{1} \ \cup _{W_{0}} \ W_{2}$};
% Text Node
\draw (398.85,51.82) node [anchor=north west][inner sep=0.75pt]    {$F_{1}$};
% Text Node
\draw (267.2,152.94) node [anchor=north west][inner sep=0.75pt]    {$F_{2}$};
% Text Node
\draw (510,142.4) node [anchor=north west][inner sep=0.75pt]    {$F_{4}$};
% Text Node
\draw (398.42,216.77) node [anchor=north west][inner sep=0.75pt]    {$F_{3}$};
% Connection
\draw    (380,39.5) -- (543,39.5) ;
\draw [shift={(545,39.5)}, rotate = 180] [color={rgb, 255:red, 0; green, 0; blue, 0 }  ][line width=0.75]    (7.65,-2.3) .. controls (4.86,-0.97) and (2.31,-0.21) .. (0,0) .. controls (2.31,0.21) and (4.86,0.98) .. (7.65,2.3)   ;
\draw [shift={(380,39.5)}, rotate = 0] [color={rgb, 255:red, 0; green, 0; blue, 0 }  ][line width=0.75]      (0,-7.83) .. controls (-2.16,-7.83) and (-3.91,-6.07) .. (-3.91,-3.91) .. controls (-3.91,-1.75) and (-2.16,0) .. (0,0) ;
% Connection
\draw    (251.5,111.98) .. controls (251.96,109.67) and (253.35,108.74) .. (255.66,109.2) .. controls (257.97,109.67) and (259.36,108.74) .. (259.82,106.43) .. controls (260.27,104.12) and (261.66,103.19) .. (263.97,103.65) .. controls (266.28,104.11) and (267.67,103.18) .. (268.13,100.87) .. controls (268.59,98.56) and (269.98,97.63) .. (272.29,98.09) .. controls (274.6,98.56) and (275.99,97.63) .. (276.45,95.32) .. controls (276.9,93.01) and (278.29,92.08) .. (280.6,92.54) .. controls (282.91,93) and (284.3,92.07) .. (284.76,89.76) .. controls (285.22,87.45) and (286.61,86.52) .. (288.92,86.98) .. controls (291.23,87.45) and (292.62,86.52) .. (293.08,84.21) .. controls (293.53,81.9) and (294.92,80.97) .. (297.23,81.43) .. controls (299.54,81.89) and (300.93,80.96) .. (301.39,78.65) .. controls (301.85,76.34) and (303.24,75.41) .. (305.55,75.87) .. controls (307.86,76.34) and (309.25,75.41) .. (309.71,73.1) .. controls (310.16,70.79) and (311.55,69.86) .. (313.86,70.32) .. controls (316.17,70.78) and (317.56,69.85) .. (318.02,67.54) .. controls (318.48,65.23) and (319.87,64.3) .. (322.18,64.77) .. controls (324.49,65.23) and (325.88,64.3) .. (326.34,61.99) .. controls (326.79,59.68) and (328.18,58.75) .. (330.49,59.21) .. controls (332.8,59.67) and (334.19,58.74) .. (334.65,56.43) .. controls (335.11,54.12) and (336.5,53.19) .. (338.81,53.66) -- (339.68,53.07) -- (346.34,48.63) ;
\draw [shift={(348,47.52)}, rotate = 146.26] [color={rgb, 255:red, 0; green, 0; blue, 0 }  ][line width=0.75]    (10.93,-3.29) .. controls (6.95,-1.4) and (3.31,-0.3) .. (0,0) .. controls (3.31,0.3) and (6.95,1.4) .. (10.93,3.29)   ;
% Connection
\draw    (259.5,120.95) -- (447,120.52) ;
\draw [shift={(449,120.52)}, rotate = 179.87] [color={rgb, 255:red, 0; green, 0; blue, 0 }  ][line width=0.75]    (7.65,-2.3) .. controls (4.86,-0.97) and (2.31,-0.21) .. (0,0) .. controls (2.31,0.21) and (4.86,0.98) .. (7.65,2.3)   ;
\draw [shift={(259.5,120.95)}, rotate = 359.87] [color={rgb, 255:red, 0; green, 0; blue, 0 }  ][line width=0.75]      (0,-7.83) .. controls (-2.16,-7.83) and (-3.91,-6.07) .. (-3.91,-3.91) .. controls (-3.91,-1.75) and (-2.16,0) .. (0,0) ;
% Connection
\draw    (466,113.51) .. controls (466.23,111.16) and (467.51,110.1) .. (469.86,110.33) .. controls (472.21,110.56) and (473.49,109.51) .. (473.72,107.16) .. controls (473.95,104.81) and (475.24,103.75) .. (477.59,103.98) .. controls (479.94,104.21) and (481.22,103.16) .. (481.45,100.81) .. controls (481.68,98.46) and (482.96,97.4) .. (485.31,97.63) .. controls (487.66,97.86) and (488.94,96.81) .. (489.17,94.46) .. controls (489.4,92.11) and (490.68,91.05) .. (493.03,91.28) .. controls (495.38,91.51) and (496.67,90.45) .. (496.9,88.1) .. controls (497.13,85.75) and (498.41,84.7) .. (500.76,84.93) .. controls (503.11,85.16) and (504.39,84.1) .. (504.62,81.75) .. controls (504.85,79.4) and (506.13,78.35) .. (508.48,78.58) .. controls (510.83,78.81) and (512.11,77.75) .. (512.34,75.4) .. controls (512.57,73.05) and (513.85,71.99) .. (516.2,72.22) .. controls (518.55,72.45) and (519.84,71.4) .. (520.07,69.05) .. controls (520.3,66.7) and (521.58,65.64) .. (523.93,65.87) .. controls (526.28,66.1) and (527.56,65.05) .. (527.79,62.7) .. controls (528.02,60.35) and (529.3,59.29) .. (531.65,59.52) .. controls (534,59.75) and (535.28,58.7) .. (535.51,56.35) -- (537.28,54.9) -- (543.46,49.82) ;
\draw [shift={(545,48.55)}, rotate = 140.57] [color={rgb, 255:red, 0; green, 0; blue, 0 }  ][line width=0.75]    (10.93,-3.29) .. controls (6.95,-1.4) and (3.31,-0.3) .. (0,0) .. controls (3.31,0.3) and (6.95,1.4) .. (10.93,3.29)   ;
% Connection
\draw    (238.27,146.5) -- (239.73,286.86) ;
\draw [shift={(239.75,288.86)}, rotate = 269.4] [color={rgb, 255:red, 0; green, 0; blue, 0 }  ][line width=0.75]    (10.93,-3.29) .. controls (6.95,-1.4) and (3.31,-0.3) .. (0,0) .. controls (3.31,0.3) and (6.95,1.4) .. (10.93,3.29)   ;
\draw [shift={(238.27,146.5)}, rotate = 269.4] [color={rgb, 255:red, 0; green, 0; blue, 0 }  ][line width=0.75]    (10.93,-3.29) .. controls (6.95,-1.4) and (3.31,-0.3) .. (0,0) .. controls (3.31,0.3) and (6.95,1.4) .. (10.93,3.29)   ;
% Connection
\draw    (257.87,300.78) -- (445.77,299.92) ;
\draw [shift={(447.77,299.91)}, rotate = 179.74] [color={rgb, 255:red, 0; green, 0; blue, 0 }  ][line width=0.75]    (7.65,-2.3) .. controls (4.86,-0.97) and (2.31,-0.21) .. (0,0) .. controls (2.31,0.21) and (4.86,0.98) .. (7.65,2.3)   ;
\draw [shift={(257.87,300.78)}, rotate = 359.74] [color={rgb, 255:red, 0; green, 0; blue, 0 }  ][line width=0.75]      (0,-7.83) .. controls (-2.16,-7.83) and (-3.91,-6.07) .. (-3.91,-3.91) .. controls (-3.91,-1.75) and (-2.16,0) .. (0,0) ;
% Connection
\draw    (468.77,289.03) .. controls (468.73,286.67) and (469.89,285.47) .. (472.25,285.43) .. controls (474.61,285.4) and (475.77,284.2) .. (475.73,281.84) .. controls (475.69,279.48) and (476.85,278.28) .. (479.21,278.25) .. controls (481.57,278.22) and (482.73,277.02) .. (482.69,274.66) .. controls (482.65,272.31) and (483.81,271.11) .. (486.16,271.07) .. controls (488.52,271.04) and (489.68,269.84) .. (489.64,267.48) .. controls (489.6,265.12) and (490.76,263.92) .. (493.12,263.89) .. controls (495.48,263.86) and (496.64,262.66) .. (496.6,260.3) .. controls (496.56,257.94) and (497.72,256.74) .. (500.08,256.7) .. controls (502.44,256.67) and (503.6,255.47) .. (503.56,253.11) .. controls (503.52,250.75) and (504.68,249.55) .. (507.04,249.52) .. controls (509.4,249.49) and (510.56,248.29) .. (510.52,245.93) .. controls (510.48,243.57) and (511.64,242.37) .. (514,242.34) .. controls (516.36,242.31) and (517.52,241.11) .. (517.48,238.75) .. controls (517.44,236.39) and (518.6,235.19) .. (520.96,235.16) .. controls (523.31,235.12) and (524.47,233.92) .. (524.43,231.57) .. controls (524.39,229.21) and (525.55,228.01) .. (527.91,227.98) .. controls (530.27,227.94) and (531.43,226.74) .. (531.39,224.38) .. controls (531.35,222.02) and (532.51,220.82) .. (534.87,220.79) -- (537.4,218.18) -- (542.97,212.44) ;
\draw [shift={(544.36,211)}, rotate = 134.09] [color={rgb, 255:red, 0; green, 0; blue, 0 }  ][line width=0.75]    (10.93,-3.29) .. controls (6.95,-1.4) and (3.31,-0.3) .. (0,0) .. controls (3.31,0.3) and (6.95,1.4) .. (10.93,3.29)   ;
% Connection
\draw    (249.87,292.46) .. controls (250.08,290.11) and (251.35,289.04) .. (253.7,289.24) .. controls (256.05,289.45) and (257.32,288.38) .. (257.53,286.03) .. controls (257.74,283.68) and (259.01,282.61) .. (261.36,282.81) .. controls (263.71,283.01) and (264.98,281.94) .. (265.18,279.59) .. controls (265.39,277.24) and (266.66,276.17) .. (269.01,276.38) .. controls (271.36,276.58) and (272.63,275.51) .. (272.84,273.16) .. controls (273.05,270.81) and (274.32,269.74) .. (276.67,269.94) .. controls (279.02,270.15) and (280.29,269.08) .. (280.5,266.73) .. controls (280.7,264.38) and (281.97,263.31) .. (284.32,263.51) .. controls (286.67,263.71) and (287.94,262.64) .. (288.15,260.29) .. controls (288.36,257.94) and (289.63,256.87) .. (291.98,257.08) .. controls (294.33,257.28) and (295.6,256.21) .. (295.81,253.86) .. controls (296.02,251.51) and (297.29,250.44) .. (299.64,250.64) .. controls (301.99,250.85) and (303.26,249.78) .. (303.46,247.43) .. controls (303.67,245.08) and (304.94,244.01) .. (307.29,244.21) .. controls (309.64,244.41) and (310.91,243.34) .. (311.12,240.99) .. controls (311.33,238.64) and (312.6,237.57) .. (314.95,237.78) .. controls (317.3,237.98) and (318.57,236.91) .. (318.78,234.56) .. controls (318.98,232.21) and (320.25,231.14) .. (322.6,231.34) .. controls (324.95,231.55) and (326.22,230.48) .. (326.43,228.13) .. controls (326.64,225.78) and (327.91,224.71) .. (330.26,224.91) .. controls (332.61,225.11) and (333.88,224.04) .. (334.09,221.69) .. controls (334.3,219.34) and (335.57,218.27) .. (337.92,218.48) -- (341.34,215.6) -- (347.47,210.45) ;
\draw [shift={(349,209.16)}, rotate = 139.96] [color={rgb, 255:red, 0; green, 0; blue, 0 }  ][line width=0.75]    (10.93,-3.29) .. controls (6.95,-1.4) and (3.31,-0.3) .. (0,0) .. controls (3.31,0.3) and (6.95,1.4) .. (10.93,3.29)   ;
% Connection
\draw    (555.92,64) -- (555.54,186) ;
\draw [shift={(555.54,188)}, rotate = 270.18] [color={rgb, 255:red, 0; green, 0; blue, 0 }  ][line width=0.75]    (10.93,-3.29) .. controls (6.95,-1.4) and (3.31,-0.3) .. (0,0) .. controls (3.31,0.3) and (6.95,1.4) .. (10.93,3.29)   ;
\draw [shift={(555.92,64)}, rotate = 270.18] [color={rgb, 255:red, 0; green, 0; blue, 0 }  ][line width=0.75]    (10.93,-3.29) .. controls (6.95,-1.4) and (3.31,-0.3) .. (0,0) .. controls (3.31,0.3) and (6.95,1.4) .. (10.93,3.29)   ;
% Connection
\draw    (457.61,145) -- (458.21,285.86) ;
\draw [shift={(458.22,287.86)}, rotate = 269.75] [color={rgb, 255:red, 0; green, 0; blue, 0 }  ][line width=0.75]    (10.93,-3.29) .. controls (6.95,-1.4) and (3.31,-0.3) .. (0,0) .. controls (3.31,0.3) and (6.95,1.4) .. (10.93,3.29)   ;
\draw [shift={(457.61,145)}, rotate = 269.75] [color={rgb, 255:red, 0; green, 0; blue, 0 }  ][line width=0.75]    (10.93,-3.29) .. controls (6.95,-1.4) and (3.31,-0.3) .. (0,0) .. controls (3.31,0.3) and (6.95,1.4) .. (10.93,3.29)   ;
\end{tikzpicture}
\end{adjustbox}

Note that the cubes compose in each of the three directions. 
The horizontal and vertical stacking of cubes 
is given by composing the SK-embeddings of cobordisms  fiberwise.
In the cobordism direction, the  fiberwise composition of two cobordisms  bundles
\begin{comment}
\[
(l_1, W_1), ~(l_2,W_2) \in X_{0,0,1} = \{0\} \times X_{0,0,0} \; \;\sqcup \;\; (0,\infty) \times \bigsqcup_{[W]} \mathcal{M}(W)
\]
\end{comment}
with corresponding %\rout{boundaries}
 manifold bundles on the boundary
%\rout{$\partial (W_1)_{out} = \partial (W_2)_{in}$} 
is defined to be a cobordism bundle over $\Delta^r$ where the maps $l$ and $l'$ are added fiberwise and the cobordisms are fiberwise composed along the common boundary over a point in $\Delta^r$.
%an element $(l_3, W_3) \in X_{0,0,1}$, where $W_3$ is given by composing cobordisms $W_1$ and $W_2$ along the common boundary \jout{and rescaling the result (proportionally w.r.t. the given lengths) to make it a cobordism of length 1.} The length $l_3$ is defined as a sum of the lengths $l_1$ and $l_2.$ Stacking the cubes in the cobordism direction is defined by composing the corresponding cobordisms. 

\subsection{General construction of  $X^*_{\bullet, \bullet,\bullet}$}
 Now we use elements of $X^r_{a,b,c}$ with $a,b,c \in \{0,1\}$ as building blocks to define the simplices of arbitrary size. Any admissible composition of the building blocks over $\Delta^r$ that form a diagram of the size $k \times l \times m$ with $k,l,m \in \mathbb{Z}_{\geq 0}$ represents an element in $X^r_{k,l,m}$. Note that for $k,l,m \geq 1$ the set $X^r_{k,l,m}$ of all $(k,l,m)$-simplices over $\Delta^r$ is given by a set of all ``cubical'' diagrams of size $k \times l \times m$, consisting of $klm$ copies of size 1 cubes fibered %in $X^0_{1,1,1}$ 
over $\Delta^r$.

\begin{comment}
Now we use elements of $X_{a,b,c}$ with $a,b,c \in \{0,1\}$ as building blocks to define the simplices of arbitrary size. Any admissible composition of the building blocks that form a diagram of the size $k \times l \times m$ with $k,l,m \in \mathbb{Z}_{\geq 0}$ represents an element in $X_{k,l,m}$. Note that for $k,l,m \geq 1$ the set $X_{k,l,m}$ of all $(k,l,m)$-simplices is given by a set of all ``cubical'' diagrams of size $k \times l \times m$, consisting of $klm$ copies of cubes in $X_{1,1,1}.$  
For example,
$$
X_{2,1,1} = X_{1,1,1} \underset{X_{0,1,1}}\times X_{1,1,1}.
$$
The set $X_{k,l,m}$ is 
a subset of the product $(X_{1,1,1})^{klm}$ for $k,l,m \geq 1$ and we endow it with a subspace topology. The cases where one of the coordinates $k,l$ or $m$ is $0$ are treated analogously and the topology defined this way is consistent, meaning that all the face and degeneracy maps are continuous. 
\end{comment}

\section{Recovering cobordism cut and paste groups as $K^\cube_0$} \label{pi_0ComputationSec}

In this section we build a spectrum out of the  quadrisimplicial set $X^*_{\bullet, \bullet, \bullet}$
%trisimplicial space $X_{\bullet, \bullet, \bullet}$ 
and show it recovers the cobordism cut and paste group with boundary $\SKbar^\d_n$ on $\pi_0$.
The proof of the latter (\autoref{thm:pi_0}), is a generalisation of \cite[Theorem 3.1]{campbell2023algebraic}, but requires more complicated simplicial combinatorics to deal with the additional simplicial directions.
We define $X_\bullet$ to be the diagonal of the quadrisimplicial set. 
Note that for a quadrisimplicial set, the realisation of the diagonal simplicial set is homotopy equivalent to the realisation of the quadrisimplicial set itself
$$|X_\bullet| \simeq |X^*_{\bullet, \bullet, \bullet}|.$$
% \textcolor{blue}{should we cite something here?}
 %This is shown for bisimplicial spaces in Theorem 7.1 of \cite{ebert2019semisimplicial} and the proof for the trisimplicial spaces is analogous.} 

%We define $X_\bullet$ to be the diagonal of the trisimplicial space. Note that for a trisimplicial space, the fat realisation of the diagonal simplicial space is homotopy equivalent to the fat realisation of the trisimplicial space itself$$||X_\bullet|| \simeq ||X_{\bullet, \bullet, \bullet}||.$$ This is shown for bisimplicial spaces in Theorem 7.1 of \cite{ebert2019semisimplicial} and the proof for the trisimplicial spaces is analogous. 

%\begin{remarksection}\upshape
%\rout{We have chosen work with fat realisations. One can also work with a thin realisation. The thin realisation of a simplicial space is homotopy equivalent to the fat one if the simplicial space is `good' in the sense of Segal \cite{segal1974categories}.}
%\end{remarksection}

\begin{remarksection}\upshape
Note that $X_\bullet$ cannot be interpreted as the nerve of %any 
a topological category as there is no natural way to define a composition map $$X_1 \underset{X_0}\times X_1 \rightarrow X_2.$$ Given two cubes  over $\Delta^1$ that agree on the appropriate vertex, it is not always possible to find a $2\times 2\times 2$ cube  over $\Delta^2$ incorporating the two. This also holds on the level of squares of SK-embeddings in  $X^*_{\bullet, \bullet, 0}$. For an example, see Figure \ref{squares-do-not-extend}.
\end{remarksection}

\begin{figure}[ht]

\adjustbox{scale=0.40}{

\tikzset{every picture/.style={line width=0.75pt}} %set default line width to 0.75pt        

\begin{tikzpicture}[x=0.75pt,y=0.75pt,yscale=-1,xscale=1]
%uncomment if require: \path (0,768); %set diagram left start at 0, and has height of 768

%Image [id:dp39709016142128206] 
\draw (309.5,467.49) node  {\includegraphics[width=466.5pt,height=458.98pt]{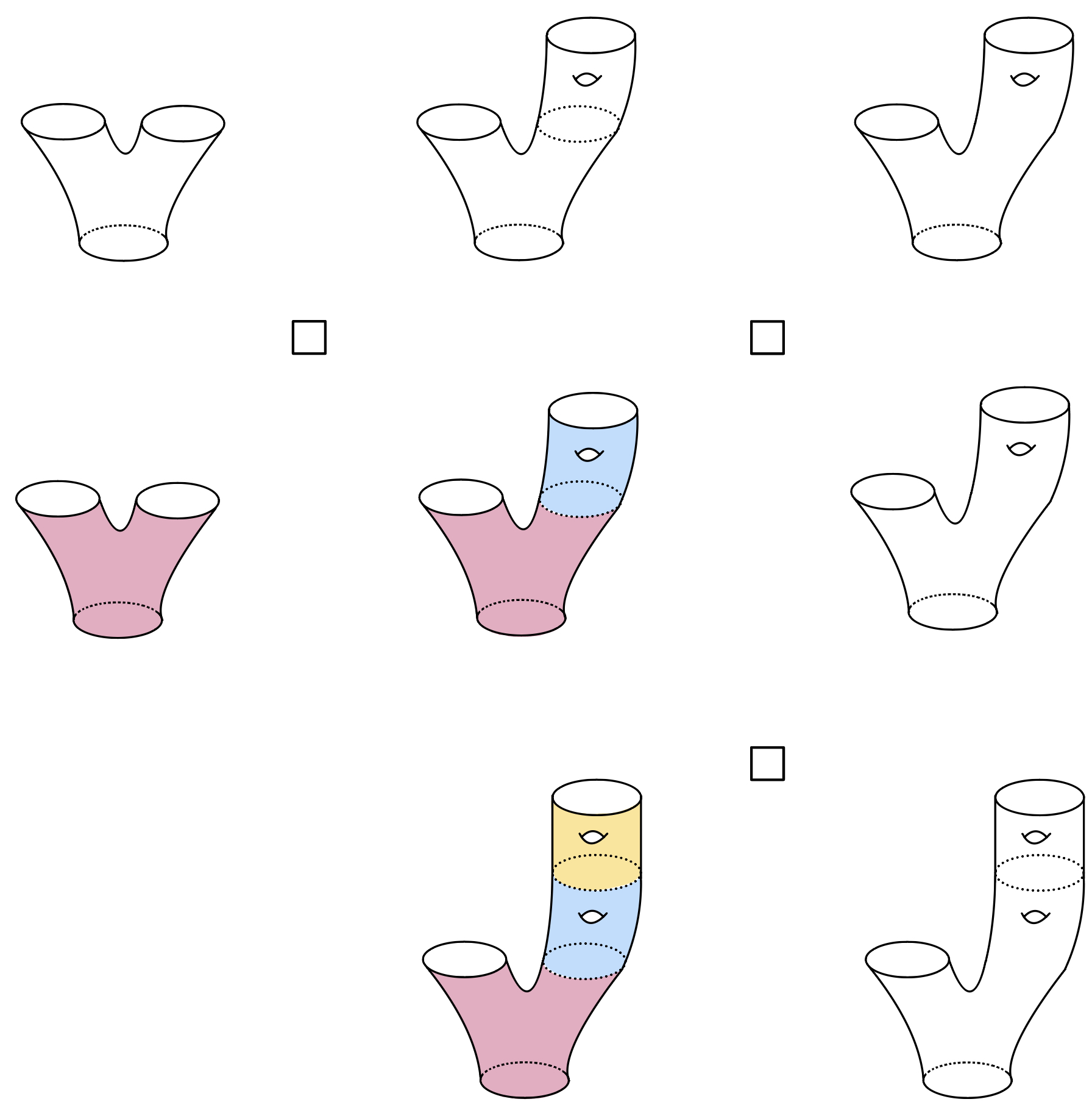}};
%Straight Lines [id:da39092740698838346] 
\draw    (150.5,269.5) -- (172,269.5) -- (210.5,269.5) ;
\draw [shift={(212.5,269.5)}, rotate = 180] [color={rgb, 255:red, 0; green, 0; blue, 0 }  ][line width=0.75]    (10.93,-3.29) .. controls (6.95,-1.4) and (3.31,-0.3) .. (0,0) .. controls (3.31,0.3) and (6.95,1.4) .. (10.93,3.29)   ;
\draw [shift={(150.5,269.5)}, rotate = 0] [color={rgb, 255:red, 0; green, 0; blue, 0 }  ][line width=0.75]      (0,-11.18) .. controls (-3.09,-11.18) and (-5.59,-8.68) .. (-5.59,-5.59) .. controls (-5.59,-2.5) and (-3.09,0) .. (0,0) ;
%Straight Lines [id:da18309685262907527] 
\draw    (396.04,271.15) -- (370.04,271.15) -- (370.04,268.15) -- (471.51,268.5)(396.04,268.15) -- (370.04,268.15) -- (370.03,271.15) -- (471.49,271.5) ;
%Straight Lines [id:da6152918016425366] 
\draw    (398.04,467.15) -- (372.04,467.15) -- (372.04,464.15) -- (473.51,464.5)(398.04,464.15) -- (372.04,464.15) -- (372.03,467.15) -- (473.49,467.5) ;
%Straight Lines [id:da33759522222286376] 
\draw    (405.04,729.15) -- (379.04,729.15) -- (379.04,726.15) -- (480.51,726.5)(405.04,726.15) -- (379.04,726.15) -- (379.03,729.15) -- (480.49,729.5) ;
%Straight Lines [id:da1408612600031227] 
\draw    (149.5,468.5) -- (171,468.5) -- (209.5,468.5) ;
\draw [shift={(211.5,468.5)}, rotate = 180] [color={rgb, 255:red, 0; green, 0; blue, 0 }  ][line width=0.75]    (10.93,-3.29) .. controls (6.95,-1.4) and (3.31,-0.3) .. (0,0) .. controls (3.31,0.3) and (6.95,1.4) .. (10.93,3.29)   ;
\draw [shift={(149.5,468.5)}, rotate = 0] [color={rgb, 255:red, 0; green, 0; blue, 0 }  ][line width=0.75]      (0,-11.18) .. controls (-3.09,-11.18) and (-5.59,-8.68) .. (-5.59,-5.59) .. controls (-5.59,-2.5) and (-3.09,0) .. (0,0) ;
%Straight Lines [id:da5047542476018072] 
\draw  [dash pattern={on 0.84pt off 2.51pt}]  (105.5,728.5) -- (183,728.5) -- (221.5,728.5) ;
\draw [shift={(223.5,728.5)}, rotate = 180] [color={rgb, 255:red, 0; green, 0; blue, 0 }  ][line width=0.75]    (10.93,-3.29) .. controls (6.95,-1.4) and (3.31,-0.3) .. (0,0) .. controls (3.31,0.3) and (6.95,1.4) .. (10.93,3.29)   ;
%Straight Lines [id:da7589724238421884] 
\draw    (297.7,324.75) -- (297.7,369.27)(294.7,324.75) -- (294.7,369.27) ;
%Straight Lines [id:da9217619711863958] 
\draw    (546.7,320.75) -- (546.7,365.27)(543.7,320.75) -- (543.7,365.27) ;
%Straight Lines [id:da019399351234066664] 
\draw    (298.01,536.2) -- (298.79,575.77) ;
\draw [shift={(298.83,577.77)}, rotate = 268.87] [color={rgb, 255:red, 0; green, 0; blue, 0 }  ][line width=0.75]    (10.93,-3.29) .. controls (6.95,-1.4) and (3.31,-0.3) .. (0,0) .. controls (3.31,0.3) and (6.95,1.4) .. (10.93,3.29)   ;
\draw [shift={(298.01,536.2)}, rotate = 268.87] [color={rgb, 255:red, 0; green, 0; blue, 0 }  ][line width=0.75]    (10.93,-3.29) .. controls (6.95,-1.4) and (3.31,-0.3) .. (0,0) .. controls (3.31,0.3) and (6.95,1.4) .. (10.93,3.29)   ;
%Straight Lines [id:da33165654995322114] 
\draw    (544.01,536.2) -- (544.79,575.77) ;
\draw [shift={(544.83,577.77)}, rotate = 268.87] [color={rgb, 255:red, 0; green, 0; blue, 0 }  ][line width=0.75]    (10.93,-3.29) .. controls (6.95,-1.4) and (3.31,-0.3) .. (0,0) .. controls (3.31,0.3) and (6.95,1.4) .. (10.93,3.29)   ;
\draw [shift={(544.01,536.2)}, rotate = 268.87] [color={rgb, 255:red, 0; green, 0; blue, 0 }  ][line width=0.75]    (10.93,-3.29) .. controls (6.95,-1.4) and (3.31,-0.3) .. (0,0) .. controls (3.31,0.3) and (6.95,1.4) .. (10.93,3.29)   ;
%Straight Lines [id:da08092349737952764] 
\draw    (69.7,328.75) -- (69.7,373.27)(66.7,328.75) -- (66.7,373.27) ;
%Straight Lines [id:da7321333387410605] 
\draw  [dash pattern={on 0.84pt off 2.51pt}]  (67.97,547.2) -- (68.82,683.77) ;
\draw [shift={(68.83,685.77)}, rotate = 269.64] [color={rgb, 255:red, 0; green, 0; blue, 0 }  ][line width=0.75]    (10.93,-3.29) .. controls (6.95,-1.4) and (3.31,-0.3) .. (0,0) .. controls (3.31,0.3) and (6.95,1.4) .. (10.93,3.29)   ;

% Text Node
\draw (62.25,710.99) node [anchor=north west][inner sep=0.75pt]  [font=\footnotesize] [align=left] {{\huge {\fontfamily{ptm}\selectfont ?}}};
% Text Node
\draw (145.25,564.99) node [anchor=north west][inner sep=0.75pt]  [font=\footnotesize] [align=left] {\begin{minipage}[lt]{41.24pt}\setlength\topsep{0pt}
\begin{center}
{\Large {\fontfamily{ptm}\selectfont Not}}\\{\Large {\fontfamily{ptm}\selectfont possible}}
\end{center}

\end{minipage}};

\end{tikzpicture}

}
\caption{Composing squares (or cubes) that coincide in only one vertex is not always possible}
\label{squares-do-not-extend}
\end{figure}

%\rout{For technical reasons, we will work with the trisimplicial space $X^{\delta}_{\bullet, \bullet, \bullet}$ which denotes the discretization of $X_{\bullet, \bullet, \bullet}$. By this we mean a trisimplicial space with 0-simplices $X^{\delta}_{0, 0, 0}$ given by the set underlying $X_{0, 0, 0}$ endowed with the discrete topology, 1-simplices in each of three simplicial directions defined analogously to the definition of $X_{\bullet, \bullet, \bullet}$ but with the T-cup topology replaced by a disjoint union, and higher simplices defined analogously.
%We denote by $X^{\delta}_{\bullet}$ the corresponding diagonal. }

\begin{definitionsection}\upshape
 We define $$ K^\cube(\mbar) \coloneqq \Omega_{\varnothing} | X^*_{\bullet, \bullet, \bullet} |  \simeq \Omega_{\varnothing} | X_{\bullet} | .$$
%We define $$ K^\cube(\overline{\overline{\mathrm{Mfd}}}^\partial_n) \coloneqq \Omega_{\varnothing} || X_{\bullet, \bullet, \bullet} ||,$$ $$ K^\cube(\mbar) \coloneqq \Omega_{\varnothing} || X^\delta_{\bullet, \bullet, \bullet} ||.$$
\end{definitionsection}

%\rout{Note that in the case of a weakly unital topological category $\mathcal{C}$ that is locally fibrant (also referred to as fibrant), the identity functor from $\mathcal{C}^\delta$ to $\mathcal{C}$ induces a homotopy equivalence on classifying spaces} 
%\cite[Theorem 5.2]{ebert2019semisimplicial}. It would be interesting to know whether the natural map $K^\cube(\mbar) \rightarrow K^\cube(\overline{\overline{\mathrm{Mfd}}}^\partial_n)$ induces a homotopy equivalence.}

In the proofs below we will use a bracket to denote 1-simplices (cubes  over $\Delta^1$) and 2-simplices ($2\times 2 \times 2$ cubes  over $\Delta^2$) that are generated from the diagram inside brackets by extending with degeneracies in all other directions, for example:

   	$\left[\begin{tikzcd}[scale cd=0.7, row sep=scriptsize,column sep=scriptsize]
 A \arrow[r, hook] & B 
\end{tikzcd} \right] \coloneqq \begin{tikzcd}[scale cd=0.7,row sep=small,column sep=small]& 
    A \arrow[rr, hook]\arrow[equal, dd] & & B\arrow[equal, dd] \\
    A\arrow[rr,crossing over, hook] \arrow[ur, equal] \arrow[dd, equal] & &  B \arrow[ur, equal]\\
    	& A \arrow[rr, hook] &  & 
    	B\\
    A\arrow[rr, hook] \arrow[ur, equal] & & B\arrow[from=uu,crossing over, equal] \arrow[ur, equal]\\
  	\end{tikzcd} ~~ ~ $
and
$ ~~~
\left[\begin{tikzcd}[scale cd=0.6, row sep=small,column sep=small] 
&& A' \arrow[dd, tail] \\
A \arrow[urr, squiggly] \arrow[dd, tail] &&\\
&&C'\\
C \arrow[urr, squiggly] &&
\end{tikzcd} \right]\coloneqq \begin{tikzcd}[scale cd=0.7,row sep=small,column sep=small]& 
    A' \arrow[rr, equal]\arrow[tail, dd] & & A'\arrow[tail, dd] \\
    A\arrow[rr,crossing over, equal] \arrow[ur, squiggly] \arrow[dd, tail] & &  A \arrow[ur, squiggly]\\
    	& C' \arrow[rr, equal] &  & 
    	C'\\
    C\arrow[rr, equal] \arrow[ur, squiggly] & & C\arrow[from=uu,crossing over, tail] \arrow[ur, squiggly]\\
  	\end{tikzcd}, 
$
 where we think of vertices of the cubes on the right as being the total spaces of fibre bundles over $\Delta^1$, which fibrewise has the shape of the prescribed cube.
%with all maps of the cube being fibrewise maps.
%, an with each fibre having degeneracies in the prescribed places, and . 
Note that the diagonal face maps ensure that the source and target of a 1-simplex are single manifolds with boundary, fibred over $\Delta^0 = *$.
%\rnote{Q: are the fibres otherwise constant? In particular, are the vertices?}

%\begin{lemmasection}\label{2connected}
%\rout{The map 
%$f: X^\delta_\bullet \rightarrow \pi_0 X^\delta_\bullet $ that levelwise takes $\pi_0$, induces a map on realisations
%$||f||: || X^\delta_\bullet||  \rightarrow | \pi_0 X^\delta_\bullet|$
%that is 2-connected.}
%\end{lemmasection}
%\begin{proof}
%\rout{
%This is a direct application of Lemma 2.4 in \cite{ebert2019semisimplicial}. A simplicial space is in particular semi-simplicial. Write $f_p :  X^\delta_p \rightarrow \pi_0 X^\delta_p $. The map $f_0$ is an isomorphism as $X^\delta_0$ has the discrete topology. The map $f_1$ is 1-connected as it is by definition an isomorphism on $\pi_0$ and $\pi_0 X^\delta_0$ has trivial $\pi_1$. Hence we obtain that $|| f|| $ is a 2-connected map on fat realisations. Since $\pi_0 X^\delta_\bullet$ is a simplicial set, $||\pi_0 X^\delta_\bullet|| \simeq |\pi_0 X^\delta_\bullet|$.}
%\end{proof}

We now prove our main theorem, that recovers the cobordism cut and paste groups with boundary as $\pi_0$ of the cubes $K$-theory of manifolds spectrum. The strategy for this proof will be start by contracting all vertices to a single vertex in the diagonal of the quadrisimiplicial set $X^*_{\bullet, \bullet, \bullet}$, 
and subsequently compute $\pi_1$ (corresponding to $\pi_0$ of the spectrum) using direct combinatorial arguments. 

\begin{thm} \label{thm:pi_0}
There is an isomorphism
$$\pi_0 K^\cube(\mbar) \cong \SKbar^\partial_n.$$
\end{thm}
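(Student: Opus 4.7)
The plan is to reduce the claim to a computation of $\pi_1$ of a simplicial set, and then to identify that group with $\SKbar^\d_n$ by constructing mutually inverse homomorphisms.

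First I combine the diagonal equivalence $||X^\delta_\bullet|| \simeq ||X^\delta_{\bullet,\bullet,\bullet}||$ recalled at the start of this section with Lemma~\ref{2connected} to produce
\[
\pi_0 K^\cube(\mbar) = \pi_1(||X^\delta_{\bullet,\bullet,\bullet}||, \varnothing) \cong \pi_1(|\pi_0 X^\delta_\bullet|, [\varnothing]).
\]
Since Section~\ref{GammaSec} equips $||X^\delta_{\bullet,\bullet,\bullet}||$ with an infinite loop space structure, this fundamental group is abelian and I will write it additively throughout.

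Next I construct a homomorphism $\Phi \colon \SKbar^\d_n \to \pi_1$. For each manifold $M$, the cube $e_M$ in $X^\delta_{1,1,1}$ whose front and back SK-squares both equal the pushout square with corners $\varnothing, M, \varnothing, M$ and whose cobordism direction is trivial gives a 1-simplex from $\varnothing$ to $M$. Combining $e_M$ with the $H$-space inversion coming from the infinite loop space structure produces a loop $\ell_M$ at $\varnothing$, and I set $\Phi([M]) := [\ell_M]$. To verify that $\Phi$ descends to $\SKbar^\d_n$, I exhibit three families of 2-simplices in $X^\delta_{2,2,2}$: (i) a 2-simplex with trivial cobordism direction built from a $3\times 3$ grid of SK-squares with corners $\varnothing, M, N, M \sqcup N$, which gives the additivity $\ell_{M \sqcup N} = \ell_M + \ell_N$; (ii) for each pushout $D = B \cup_A C$, a 2-simplex with trivial cobordism direction built from a $3 \times 3$ SK-grid that, together with the basic edges $e_A, e_B, e_C, e_D$, gives the SK-relation $\ell_D + \ell_A = \ell_B + \ell_C$; and (iii) for each trivial boundary cobordism $W \colon N \to N'$, a 2-simplex with trivial SK directions given by a composable pair of cobordisms, which yields $\ell_N = \ell_{N'}$.

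For the reverse map $\Psi \colon \pi_1 \to \SKbar^\d_n$, I use the edge-path-groupoid presentation of $\pi_1(|\pi_0 X^\delta_\bullet|, [\varnothing])$. After choosing the spanning tree consisting of the edges $e_v$ for every vertex $v \in \pi_0 X^\delta_0$, each non-tree edge $e$ from $v_0$ to $v_1$ contributes a canonical generating loop $e_{v_1}^{-1} \cdot e \cdot e_{v_0}$ at $\varnothing$. Using the data of the cube $e$ (its front SK-square $(A,B,C,D)$, its back SK-square $(A',B',C',D')$, and its four connecting cobordisms), I assign $\Psi$ of this generator to the unique element of $\SKbar^\d_n$ dictated by the SK- and trivial-boundary-cobordism identifications encoded in $e$, and then check that every 2-simplex of $X^\delta_\bullet$ imposes only a relation that already holds in $\SKbar^\d_n$. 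Combined with direct verification that $\Psi \circ \Phi([M]) = [M]$ on the generators and $\Phi \circ \Psi$ is the identity on the generating loops, this produces the desired isomorphism.

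The principal obstacle is the well-definedness of $\Psi$: the three simplicial directions produce a significantly richer family of 2-simplices than in the bisimplicial case treated in \cite{SKpaper}, and in particular the mixed 2-simplices, whose face cubes combine nontrivial SK- and cobordism-data, must be shown not to create any additional relations beyond the SK- and trivial-boundary-cobordism relations already defining $\SKbar^\d_n$. The key point is a careful analysis of the three face cubes of a general $2 \times 2 \times 2$ cube-of-cubes, showing that compatibility of front SK-squares with back SK-squares via the connecting cobordisms, after quotienting by trivial boundary cobordism, reduces exactly to the defining relations of $\SKbar^\d_n$.
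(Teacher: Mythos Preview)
Your overall architecture---reduce to $\pi_1$ of the simplicial set $|\pi_0 X^\delta_\bullet|$ and then build mutually inverse homomorphisms---is reasonable, but as written the proposal has two concrete gaps.

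\textbf{The map $\Psi$ is not actually defined.} You write that for a general cube $e$ (with front square $(A,B,C,D)$, back square $(A',B',C',D')$ and four connecting cobordisms) you assign ``the unique element of $\SKbar^\d_n$ dictated by the SK- and trivial-boundary-cobordism identifications encoded in $e$''. But there is no such \emph{a priori} unique element: one could try $[D']-[A]$, or $[C]-[A]$, or $[D]-[B]$, or several other combinations, and the fact that these all coincide in $\SKbar^\d_n$ is precisely what has to be proved. The paper handles this not by guessing a formula and verifying it, but by systematically \emph{reducing} an arbitrary cube through explicit 2-simplices (Equations (4.2)--(4.10) in the paper) until it is expressed as $[C]-[A]$ in terms of the basic vertical edges $[\varnothing \rightarrowtail M]$. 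Without carrying out an equivalent reduction you cannot even write down $\Psi$, let alone check it against 2-simplices. Your final paragraph acknowledges this is the ``principal obstacle'' but then only promises a ``careful analysis''; that analysis \emph{is} the proof.

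\textbf{The H-space inversion is a red herring.} To turn the edge $e_M$ from $\varnothing$ to $M$ into a loop you invoke the infinite loop space structure on $||X^\delta_{\bullet,\bullet,\bullet}||$. But that structure lives on the fat realisation, not on the simplicial set $|\pi_0 X^\delta_\bullet|$ you have passed to, and the H-space inverse is only a homotopy inverse, so it does not hand you a specific return edge. The paper (and, indeed, your own construction of $\Psi$) uses the correct device: contract the tree of edges $[\varnothing \hookrightarrow M]$ via the explicit cone of 2-simplices in Equation (4.1), after which every 1-simplex is already a loop at $\varnothing$. Use that same contraction for $\Phi$ and drop the appeal to the H-space structure.

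By contrast, the paper's argument is a direct generators-and-relations computation: contract vertices, then exhibit specific 2-simplices that (a) express a generic cube as a vertical edge plus a cobordism-type face, (b) show every cobordism-type face is trivial, and (c) reduce vertical edges to differences $[C]-[A]$. This simultaneously produces the map to $\SKbar^\d_n$ and verifies all relations; your two-map strategy would ultimately require the same explicit 2-simplices, so there is no real shortcut.
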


\begin{proof}
 Let $Y \coloneqq \vert X_{\bullet}\vert$.
We have $$\pi_0 K^\cube(\mbar) = \pi_1 | X^*_{\bullet, \bullet, \bullet}| \cong \pi_1 Y.$$

%Let $Y \coloneqq \vert \pi_0 X^\delta_{\bullet}\vert$.From Lemma \ref{2connected} we know that $$\pi_0 K^\cube(\mbar) = \pi_1 || X^\delta_{\bullet, \bullet, \bullet}|| \cong \pi_1 Y.$$
First we will contract all vertices in $Y$ to a single vertex. 
Consider the set of 1-simplices of the form $\left[ \begin{tikzcd}[scale cd=0.6, row sep=scriptsize,column sep=scriptsize] 
A \arrow[r, hook] & B
\end{tikzcd}\right]$. 
	For each cube of this shape, there is a canonical 2-simplex of the following shape 
\begin{equation}\left[ \begin{tikzcd}[column sep=tiny, row sep=tiny]
    \varnothing \arrow[rr, hook]\arrow[dd, equal] && A \arrow[rr, hook] \arrow[dd, equal] && B \arrow[dd, equal] \\ \\
    \varnothing \arrow[rr, hook]\arrow[dd, equal] && A \arrow[rr, hook] \arrow[dd, equal] && B \arrow[dd, equal] \\ \\
    \varnothing \arrow[rr, hook] && A \arrow[rr, hook] && B
    \end{tikzcd}\right],\label{eqn:contracting}\end{equation}
where we extend twice in the cobordism direction using identity cobordisms and  the induced $\SK$-embeddings on the cobordisms. This 2-simplex has boundary
$$
\left[ \begin{tikzcd}[scale cd=0.6, row sep=scriptsize,column sep=scriptsize]
\varnothing \arrow[r, hook] & A \end{tikzcd}\right]
+ \left[ \begin{tikzcd}[scale cd=0.6, row sep=scriptsize,column sep=scriptsize]
 A \arrow[r, hook] & B \end{tikzcd}\right]
-\left[ \begin{tikzcd}[scale cd=0.6, row sep=scriptsize,column sep=scriptsize]
\varnothing \arrow[r, hook] & B \end{tikzcd}\right].
$$
 The subset 
 %of 
 generated by all 0-simplices together with the 1- and 2-simplices above %these 1- and 2-simplices, 
 %together with their images under degeneracy maps, 
 form a subsimplicial set $X'_\bullet$ of $X_\bullet$, of which the realisation $Y'$ is homeomorphic to a cone over all  non-degenerate 1-simplices of the shape $\left[\begin{tikzcd}[scale cd=0.7, row sep=scriptsize,column sep=scriptsize]
 A \arrow[r, hook] & B \end{tikzcd} \right]$
  for $A \neq \varnothing$, with conepoint $\varnothing$.
As a cone is contractible and $Y$ and $Y'$ form a CW pair, we can perform a topological quotient in $Y$ leaving the homotopy type unchanged.
The realisation of a quotient of simplicial sets is the quotient of the realisations, so we have that $Y \simeq Y/Y' \cong |X_\bullet/X'_\bullet|$.  Compare also with the proof of \cite[Theorem 3.1]{campbell2023algebraic}.

%see https://math.stackexchange.com/questions/1426958/when-is-the-quotient-of-simplicial-complexes-a-simplicial-complex

%The subset of these 1- and 2-simplices correspond to a subspace in $Y$ that is homeomorphic to a cone over the set  over the space of all one simplices of the shape	$\left[\begin{tikzcd}[scale cd=0.7, row sep=scriptsize,column sep=scriptsize] A \arrow[r, hook] & B \end{tikzcd} \right]$ with conepoint $\varnothing$. As a cone is contractible and $Y$ and the cone form a CW pair, we can perform a topological quotient in $Y$ leaving the homotopy type unchanged.

%\renee{Add a note saying that we still have a simplicial set after contracting.}
 
We will now compute  $\pi_1 Y \cong \pi_1 |X_\bullet/X'_\bullet|$. Note that the fundamental group of a simplicial set  with a single vertex  has a presentation with generators given by all 1-simplices and relations given by 2-simplices, where a 2-simplex 
\[\begin{tikzcd}[column sep=1.5em]
 & B \arrow{dr}{g} \\
A \arrow{ur}{f} \arrow{rr}{h} && C
\end{tikzcd}
\]
induces the relation $f+g-h =0$. In practice this means that a 2-simplex represented by a $2\times 2\times 2$-cube  over $\Delta^2$ induces the relation that the initial plus the final cubes  as $\Delta^1$-bundles equals the composite cube  over $\Delta^1$ viewed as 1-simplices.

First we will reduce the generators of $\pi_1 Y$ using 2-simplices.
    
  Consider a generic $1$-simplex (in red in the figure bellow).   Extending via identities,
    we obtain the following $2$-simplex:

\begin{adjustbox}{scale=0.5, center}

\tikzset{every picture/.style={line width=0.75pt}} %set default line width to 0.75pt        

% [inline block 0: 1 envs, 39728 chars -> data_tex | \begin{tikzpicture}[x=0.75pt,y=0.75pt,yscale=-1,xscale=1] %uncomment if require: \path (0,768); %set diagram left start ...]


    \end{adjustbox}
    
    This implies the relation  
   \begin{equation} \label{generic-cube-relation} \left[\begin{tikzcd}[scale cd=0.6, row sep=scriptsize,column sep=scriptsize]
    	A \arrow[d, tail] \\
    	C\end{tikzcd}\right]  + \left[\begin{tikzcd}[scale cd=0.6, row sep=scriptsize,column sep=scriptsize] & C' \arrow[rr, hook] && D' \\
    	C \arrow[ur, squiggly, "W_2"] \arrow[rr, hook] && D  \arrow[ur, squiggly, "W_3"]& \end{tikzcd} \right]  = \left[ \textnormal{generic $1$-simplex}\right].
	\end{equation}

    Analogously, we can construct the following $2$-simplex and deduce the relation in Equation \eqref{back-vertical}.
    
\begin{adjustbox}{scale=0.5, center}

\tikzset{every picture/.style={line width=0.75pt}} %set default line width to 0.75pt        

% [inline block 1: 1 envs, 38222 chars -> data_tex | \begin{tikzpicture}[x=0.75pt,y=0.75pt,yscale=-1,xscale=1] %uncomment if require: \path (0,768); %set diagram left start ...]


     \end{adjustbox}

   \begin{equation} \label{back-vertical}  \left[\begin{tikzcd}[scale cd=0.6, row sep=scriptsize,column sep=scriptsize] & A' \arrow[rr, hook] && B' \\
    	A \arrow[ur, squiggly, "W_0"] \arrow[rr, hook] && B  \arrow[ur, squiggly, "W_1"]& \end{tikzcd} \right] 
    	+ \left[\begin{tikzcd}[scale cd=0.6, row sep=scriptsize,column sep=scriptsize]
    	B' \arrow[d, tail] \\
    	D'\end{tikzcd}\right] 
    	= \left[ \textnormal{generic $1$-simplex}\right].
	\end{equation}

         Next we consider the following  $2$-simplex, which is obtained by extending the (non-generic) cube in green

\begin{adjustbox}{scale=0.5, center}

\tikzset{every picture/.style={line width=0.75pt}} %set default line width to 0.75pt        

% [inline block 2: 1 envs, 36428 chars -> data_tex | \begin{tikzpicture}[x=0.75pt,y=0.75pt,yscale=-1,xscale=1] %uncomment if require: \path (0,768); %set diagram left start ...]


\end{adjustbox}

        This establishes the relation        
    $$\left[\begin{tikzcd}[scale cd=0.6, row sep=scriptsize,column sep=scriptsize] & A' \\
    	A \arrow[ur, squiggly, "W_0"] & \end{tikzcd}\right]  +   \left[ \begin{tikzcd}[scale cd=0.6, row sep=scriptsize,column sep=scriptsize] 
A' \arrow[r, hook] & B'
\end{tikzcd}\right] =   \left[\begin{tikzcd}[scale cd=0.6, row sep=scriptsize,column sep=scriptsize] & A' \arrow[rr, hook] && B' \\
    	A \arrow[ur, squiggly, "W_0"] \arrow[rr, hook] && B  \arrow[ur, squiggly, "W_1"]& \end{tikzcd} \right] $$
    But $\left[ \begin{tikzcd}[scale cd=0.6, row sep=scriptsize,column sep=scriptsize] 
A' \arrow[r, hook] & B'
\end{tikzcd}\right]$ was contracted, therefore
\begin{equation}\label{cobW0}
\left[\begin{tikzcd}[scale cd=0.6, row sep=scriptsize,column sep=scriptsize] & A' \\
    	A \arrow[ur, squiggly, "W_0"] & \end{tikzcd} \right]   =   \left[\begin{tikzcd}[scale cd=0.6, row sep=scriptsize,column sep=scriptsize] & A' \arrow[rr, hook] && B' \\
    	A \arrow[ur, squiggly, "W_0"] \arrow[rr, hook] && B  \arrow[ur, squiggly, "W_1"]& \end{tikzcd} \right] 
\end{equation}

    Analogously, one can construct a  $2$-simplex to obtain the relation
    $$\left[ \begin{tikzcd}[scale cd=0.6, row sep=scriptsize,column sep=scriptsize] 
A \arrow[r, hook] & B
\end{tikzcd}\right]  +\left[\begin{tikzcd}[scale cd=0.6, row sep=scriptsize,column sep=scriptsize] & B' \\
    	B \arrow[ur, squiggly, "W_1"] & \end{tikzcd}\right]  =   \left[\begin{tikzcd}[scale cd=0.6, row sep=scriptsize,column sep=scriptsize] & A' \arrow[rr, hook] && B' \\
    	A \arrow[ur, squiggly, "W_0"] \arrow[rr, hook] && B  \arrow[ur, squiggly, "W_1"]& \end{tikzcd} \right] $$
   so that 
\begin{equation}\label{cobW1}
\left[\begin{tikzcd}[scale cd=0.6, row sep=scriptsize,column sep=scriptsize] & B' \\
    	B \arrow[ur, squiggly, "W_1"] & \end{tikzcd} \right]   =   \left[\begin{tikzcd}[scale cd=0.6, row sep=scriptsize,column sep=scriptsize] & A' \arrow[rr, hook] && B' \\
    	A \arrow[ur, squiggly, "W_0"] \arrow[rr, hook] && B  \arrow[ur, squiggly, "W_1"]& \end{tikzcd} \right] 
\end{equation}
From Equations \eqref{cobW0} and \eqref{cobW1} we see that cobordisms that fit into a square are in the same class.

Note that we are leaving out the length of the cobordism in our notation for the following reason. 
 Cobordisms either have length constant 0 over $\Delta^1$, or they have varying non-vanishing length $l:\Delta^1 \rightarrow (0,\infty)$.
%\rout{Because of the topology on $(0, \infty)$ in the definition of $X_{0,0,1}$, after taking $\pi_0$, we are left with cobordisms of two lengths: zero or non-zero.}
The $1$-simplices  
$$\left[\begin{tikzcd}[scale cd=0.6, row sep=scriptsize,column sep=scriptsize] & A \\
    	A \arrow[ur, squiggly, "Id"] & \end{tikzcd},  0 \right]$$
with cobordisms of length $0$ 
%\rout{coming from $\{0\} \times X_{0,0,0}$}
are degenerate and these do not contribute any non-trivial loops. %\rout{Hence we can assume the non-trivial cobordisms in our 1- and 2-simplices to have length 1.}
    Now note that any cobordism $W$ fits into a cube of the shape
$$
\left[\begin{tikzcd}[scale cd=0.6, row sep=scriptsize,column sep=scriptsize] & \varnothing \arrow[rr, hook] && B' \\
    	\varnothing \arrow[ur, squiggly, "\varnothing"] \arrow[rr, hook] && B'  \arrow[ur, squiggly, "W"]& \end{tikzcd} ,l \right] 
$$
This implies that 
\begin{equation}\label{cob-empty}
\left[\begin{tikzcd}[scale cd=0.6, row sep=scriptsize,column sep=scriptsize] & B' \\
    	B \arrow[ur, squiggly, "W"] & \end{tikzcd}, l \right]  = \left[\begin{tikzcd}[scale cd=0.6, row sep=scriptsize,column sep=scriptsize] & \varnothing \\
    	\varnothing \arrow[ur, squiggly, "\varnothing"] & \end{tikzcd} , l\right]  
\end{equation}

 Consider any 2-simplex with three sides of the form
$[l]:= \left[\begin{tikzcd}[scale cd=0.6, row sep=scriptsize,column sep=scriptsize] & \varnothing \\
    	\varnothing \arrow[ur, squiggly, "\varnothing"] & \end{tikzcd} , l\right]$,  where the three functions $l$ are given by
     \[\begin{tikzcd}[column sep=1.5em]
 & \varnothing \arrow{dr}{l_3} & &  \\
\varnothing \arrow{ur}{l_1} \arrow{rr}{l_2} & & \varnothing
\end{tikzcd}.
\]
 Such a 2-simplex exists for non-degenerate sides whenever $l_3(0) > l_1(0)$ and $l_3(1)>l_2(1)$. For instance, we have let $l_1$ be a generic function $l: \Delta^1 \to (0,\infty)$, $l_2 = l(0)+1:= c$ and $l_3 = 1$, which imposes the relation $$[l] = [c] + [1].$$
Now consider $l_1= A$, $ l_2=B$ and $l_3 = C$ with constants $C>A, B$. This imposes the relation $$[C] = [A]+[B].$$
%In particular $[nA] = n [A]$, and $[3]=[2]+[1] = 3 [1] = [1]+[1]$.
Taking $A=B=1$ and $C>1$ we get $[C]=2[1]$, thus in particular $[l] = 3[1]$.
We also have $[3]=[2]+[1]$ and $[3]=[2]=2 [1]$, so we conclude that $[1]=0$ and hence $[l]=0$.

%ow for any $A, B, C$ with $C> A, B$, consider the 2-simplex with sides $l_1=l_2=1$ and $l_3 = C$, which imposes the relation
%$$[C] = 2 [1].$$ Consider another $D>C$

%For two non-zero length functions $l, l':\Delta^1 \rightarrow (0,\infty)$, there is a 2-simplex with topologically the same cobordisms but sides with length functions $l$, $l'$ and $l+l'$. In particular for every $l$ I can pick $l'= $ for some constant $C\geq \sup_{\Delta^1} l$.
%--Finish.We write the cobordisms as having constant length 1.

    From  the above and Equation \eqref{cob-empty} we deduce that there are no additional generators given by cobordism, since the class of an empty cobordism (of non-zero length) is zero.
    
\begin{comment}    
    \begin{equation}\label{cob-length}
    \left[\begin{tikzcd}[scale cd=0.6, row sep=scriptsize,column sep=scriptsize] & \varnothing \\
    	\varnothing \arrow[ur, squiggly, "\varnothing"] & \end{tikzcd}, 1 \right]  + \left[\begin{tikzcd}[scale cd=0.6, row sep=scriptsize,column sep=scriptsize] & \varnothing \\
    	\varnothing \arrow[ur, squiggly, "\varnothing"] & \end{tikzcd}, 1 \right]  = \left[\begin{tikzcd}[scale cd=0.6, row sep=scriptsize,column sep=scriptsize] & \varnothing \\
    	\varnothing \arrow[ur, squiggly, "\varnothing"] & \end{tikzcd}, 2 \right]  = \left[\begin{tikzcd}[scale cd=0.6, row sep=scriptsize,column sep=scriptsize] & \varnothing \\
    	\varnothing \arrow[ur, squiggly, "\varnothing"] & \end{tikzcd}, 1 \right]. 
	\end{equation}
\end{comment}

    Therefore, from Equations \eqref{generic-cube-relation}  and   \eqref{cob-empty}, we see that $\pi_1 Y$ is generated by cubes of the form $ \left[\begin{tikzcd}[scale cd=0.6, row sep=scriptsize,column sep=scriptsize]
    	A \arrow[d, tail] \\
    	C\end{tikzcd}\right] $. 
Since we established that cubes of the shape  $$\left[\begin{tikzcd}[scale cd=0.6, row sep=scriptsize,column sep=scriptsize] & A' \arrow[rr, hook] && B' \\
    	A \arrow[ur, squiggly, "W_0"] \arrow[rr, hook] && B  \arrow[ur, squiggly, "W_1"]& \end{tikzcd} \right]$$ represent trivial loops, from \eqref{generic-cube-relation} and \eqref{back-vertical} we get that
\begin{equation} \label{vertical arrows are the same}
   \left[  \text{generic 1-simplex}   \right]  = \left[\begin{tikzcd}[scale cd=0.6, row sep=scriptsize,column sep=scriptsize]
    	A \arrow[d, tail] \\
    	C\end{tikzcd}\right]  =  \left[\begin{tikzcd}[scale cd=0.6, row sep=scriptsize,column sep=scriptsize]
    	B' \arrow[d, tail] \\
    	D' \end{tikzcd}\right].
\end{equation}

  We can deduce from the following relations of cubes
    \begin{equation}
    \partial \left[ \begin{tikzcd}[column sep=tiny, row sep=tiny]
    A \arrow[rr, equal]\arrow[dd, tail] && A \arrow[rr, hook] \arrow[dd, tail] && B \arrow[dd, tail] \\ \\
    C \arrow[rr, equal]\arrow[dd, equal] && C \arrow[rr, hook] \arrow[dd, equal] && D \arrow[dd, equal] \\ \\
    C \arrow[rr, equal] && C \arrow[rr, hook] && D
    \end{tikzcd} \right]   =    \left[\begin{tikzcd}[scale cd=0.6, row sep=scriptsize,column sep=scriptsize]
    	A \arrow[d, tail] \\
    	C\end{tikzcd}\right] + \left[ \begin{tikzcd}[scale cd=0.6, row sep=scriptsize,column sep=scriptsize] 
C \arrow[r, hook] & D
\end{tikzcd}\right] -  \left[  \begin{tikzcd}[column sep=tiny, row sep=tiny]   A \arrow[rr, hook] \arrow[dd, tail] &&  B \arrow[dd, tail] \\ 
  \\  C \arrow[rr, hook] && D \end{tikzcd}   \right] =0
  \label{eqn:firstone}
\end{equation}

 \begin{equation}
   \partial \left[ \begin{tikzcd}[column sep=tiny, row sep=tiny]
    A \arrow[rr, hook]\arrow[dd, equal] && B \arrow[rr, equal] \arrow[dd, equal] && B \arrow[dd, equal] \\ \\
    A \arrow[rr, hook]\arrow[dd, tail] && B \arrow[rr, equal] \arrow[dd, tail] && B \arrow[dd, tail] \\ \\
    C \arrow[rr, hook] && D \arrow[rr, equal] && D
    \end{tikzcd} \right]   =   \left[ \begin{tikzcd}[scale cd=0.6, row sep=scriptsize,column sep=scriptsize] 
A \arrow[r, hook] & B
\end{tikzcd}\right]   +  \left[\begin{tikzcd}[scale cd=0.6, row sep=scriptsize,column sep=scriptsize]
    	B \arrow[d, tail] \\
    	D\end{tikzcd}\right] - \left[  \begin{tikzcd}[column sep=tiny, row sep=tiny]   A \arrow[rr, hook] \arrow[dd, tail] &&  B \arrow[dd, tail] \\ 
  \\  C \arrow[rr, hook] && D \end{tikzcd}   \right]  =0
\end{equation}
that
  \begin{equation}
   \left[  \begin{tikzcd}[column sep=tiny, row sep=tiny]   A \arrow[rr, hook] \arrow[dd, tail] &&  B \arrow[dd, tail] \\ 
  \\  C \arrow[rr, hook] && D \end{tikzcd}   \right]  = \left[\begin{tikzcd}[scale cd=0.6, row sep=scriptsize,column sep=scriptsize]
    	A \arrow[d, tail] \\
    	C\end{tikzcd}\right]  =  \left[\begin{tikzcd}[scale cd=0.6, row sep=scriptsize,column sep=scriptsize]
    	B \arrow[d, tail] \\
    	D\end{tikzcd}\right]. \end{equation}
Note that the composition of embeddings $\varnothing \to A$ and $A \to C$ is an embedding $\varnothing \to C$, and the 2-simplex corresponding to this composition gives the relation

	 \begin{equation}
	 \left[\begin{tikzcd}[scale cd=0.6, row sep=scriptsize,column sep=scriptsize]
    	A \arrow[d, tail] \\
    	C\end{tikzcd}\right]   =  \left[\begin{tikzcd}[scale cd=0.6, row sep=scriptsize,column sep=scriptsize]
    	\varnothing \arrow[d, tail] \\
    	C\end{tikzcd}\right]  -   \left[\begin{tikzcd}[scale cd=0.6, row sep=scriptsize,column sep=scriptsize]
    	\varnothing \arrow[d, tail] \\
    	A \end{tikzcd}\right]  \\
	\eqqcolon \, [C] - [A].
\end{equation}
	So we deduce that $\pi_1Y$ can be generated by 1-simplices of the form $\left[\begin{tikzcd}[scale cd=0.6, row sep=scriptsize,column sep=scriptsize]
    	\varnothing \arrow[d, tail] \\
    	A \end{tikzcd}\right]$ 
     %\renee{as bundles over $\Delta^1$.}
     that we denote by $[A]$ for $A$  any manifold bundle over $\Delta^1$. 

	Moreover, whenever $A,B,C,D$ fit into a square  $$\left[  \begin{tikzcd}[column sep=tiny, row sep=tiny]   A \arrow[rr, hook] \arrow[dd, tail] &&  B \arrow[dd, tail] \\ 
  \\  C \arrow[rr, hook] && D \end{tikzcd}   \right]$$ we obtain the square relation 
	\begin{equation}\label{square relation}
	[C] - [A]=[D] - [B].
	\end{equation}

 In particular, we have the square

$$\left[  \begin{tikzcd}[column sep=tiny, row sep=tiny]   \varnothing \arrow[rr, hook, "="] \arrow[dd, tail] &&  \varnothing \arrow[dd, tail] \\ 
  \\  A \arrow[rr, hook, "\cong"] && A_0 \times [0,1]\end{tikzcd}   \right],$$
 where $A_0 \times [0,1]$ denotes a bundle over $\Delta^1$ with fiber $A_0$ that is a trivialisation of the bundle $A$. This shows that any manifold bundle representative can be reduced to a product bundle, which we denote by its fiber, i.e. $[A_0]$:
\begin{equation}\label{eqn:mfdbundle}
    [A] = [A_0]. 
\end{equation}

We have the pushout square of manifolds 
$$
   \left[  \begin{tikzcd}[column sep=tiny, row sep=tiny]   \varnothing \arrow[rr, hook] \arrow[dd, tail] &&  M \arrow[dd, tail] \\ 
  \\  N \arrow[rr, hook] && M \sqcup N \end{tikzcd}   \right]  $$
from which we conclude that 
\begin{equation}
     [N] = [M \sqcup N] - [M],
\end{equation}
hence the operation in $\pi_1 Y$ corresponds to disjoint union of manifolds.
	
So far we have established that $\pi_1 Y$ is generated by classes represented by manifolds adhering to the square relation (\ref{square relation}). We now establish the full set of relations on the generators.

	Consider a generic $2$-simplex with vertices labeled as $A_{ijk}$, where $0 \leq i, j, k \leq 2$ (see figure below).
From the arguments above, we know that a generic cube can be represented by a cube with one non-trivial vertical arrow and identities in horizontal directions, where any of the four vertical arrows can be chosen as a representative.
	This means that the relation arising from a generic 2-simplex can be expressed as Equation \eqref{relation} as illustrated in the diagram below.

\begin{adjustbox}{scale=0.45, center}
\tikzset{every picture/.style={line width=0.75pt}} %set default line width to 0.75pt    

\tikzset{every picture/.style={line width=0.75pt}} %set default line width to 0.75pt        

\begin{tikzpicture}[x=0.75pt,y=0.75pt,yscale=-1,xscale=1]
%uncomment if require: \path (0,768); %set diagram left start at 0, and has height of 768

%Shape: Rectangle [id:dp7341732884111012] 
\draw  [color={rgb, 255:red, 0; green, 0; blue, 0 }  ,draw opacity=1 ][fill={rgb, 255:red, 0; green, 0; blue, 0 }  ,fill opacity=0 ] (116.5,160.71) -- (366.57,160.71) -- (366.57,416.5) -- (116.5,416.5) -- cycle ;
%Straight Lines [id:da4424491510991644] 
\draw [color={rgb, 255:red, 0; green, 0; blue, 0 }  ,draw opacity=1 ][fill={rgb, 255:red, 0; green, 0; blue, 0 }  ,fill opacity=0 ]   (116.5,160.71) -- (279.04,83.98) ;
%Straight Lines [id:da5845908369251789] 
\draw [color={rgb, 255:red, 0; green, 0; blue, 0 }  ,draw opacity=1 ][fill={rgb, 255:red, 0; green, 0; blue, 0 }  ,fill opacity=0 ]   (366.57,160.71) -- (447.84,122.34) ;
%Straight Lines [id:da7955694418813286] 
\draw [color={rgb, 255:red, 0; green, 0; blue, 0 }  ,draw opacity=1 ][fill={rgb, 255:red, 0; green, 0; blue, 0 }  ,fill opacity=0 ]   (447.84,122.34) -- (529.11,83.98) ;
%Straight Lines [id:da4138332866079659] 
\draw [color={rgb, 255:red, 0; green, 0; blue, 0 }  ,draw opacity=1 ][fill={rgb, 255:red, 0; green, 0; blue, 0 }  ,fill opacity=0 ]   (241.53,160.71) -- (322.8,122.34) ;
%Straight Lines [id:da696265824322162] 
\draw [color={rgb, 255:red, 0; green, 0; blue, 0 }  ,draw opacity=1 ][fill={rgb, 255:red, 0; green, 0; blue, 0 }  ,fill opacity=0 ]   (322.8,122.34) -- (404.08,83.98) ;
%Straight Lines [id:da42215689637774934] 
\draw [color={rgb, 255:red, 0; green, 0; blue, 0 }  ,draw opacity=1 ][fill={rgb, 255:red, 0; green, 0; blue, 0 }  ,fill opacity=0 ]   (366.57,288.61) -- (447.84,250.24) ;
%Straight Lines [id:da7785190462044116] 
\draw [color={rgb, 255:red, 0; green, 0; blue, 0 }  ,draw opacity=1 ][fill={rgb, 255:red, 0; green, 0; blue, 0 }  ,fill opacity=0 ]   (447.84,250.24) -- (529.11,211.87) ;
%Straight Lines [id:da5144007396610175] 
\draw [color={rgb, 255:red, 0; green, 0; blue, 0 }  ,draw opacity=1 ][fill={rgb, 255:red, 0; green, 0; blue, 0 }  ,fill opacity=0 ] [dash pattern={on 0.84pt off 2.51pt}]  (241.53,288.61) -- (322.8,250.24) ;
%Straight Lines [id:da24442392328717766] 
\draw [color={rgb, 255:red, 0; green, 0; blue, 0 }  ,draw opacity=1 ][fill={rgb, 255:red, 0; green, 0; blue, 0 }  ,fill opacity=0 ] [dash pattern={on 0.84pt off 2.51pt}]  (322.8,250.24) -- (404.08,211.87) ;
%Straight Lines [id:da024185863811007735] 
\draw [color={rgb, 255:red, 0; green, 0; blue, 0 }  ,draw opacity=1 ][fill={rgb, 255:red, 0; green, 0; blue, 0 }  ,fill opacity=0 ]   (241.53,160.71) -- (241.53,416.5) ;
%Straight Lines [id:da9463026276269222] 
\draw [color={rgb, 255:red, 0; green, 0; blue, 0 }  ,draw opacity=1 ][fill={rgb, 255:red, 0; green, 0; blue, 0 }  ,fill opacity=0 ] [dash pattern={on 0.84pt off 2.51pt}]  (197.77,122.34) -- (197.77,378.13) ;
%Straight Lines [id:da7997218634900531] 
\draw [color={rgb, 255:red, 0; green, 0; blue, 0 }  ,draw opacity=1 ][fill={rgb, 255:red, 0; green, 0; blue, 0 }  ,fill opacity=0 ] [dash pattern={on 0.84pt off 2.51pt}]  (404.08,83.98) -- (404.08,339.76) ;
%Straight Lines [id:da4270126017124485] 
\draw [color={rgb, 255:red, 0; green, 0; blue, 0 }  ,draw opacity=1 ][fill={rgb, 255:red, 0; green, 0; blue, 0 }  ,fill opacity=0 ][line width=0.75]  [dash pattern={on 0.84pt off 2.51pt}]  (279.04,83.98) -- (279.04,193.96) -- (279.04,339.76) ;
%Straight Lines [id:da8115151442271218] 
\draw [color={rgb, 255:red, 0; green, 0; blue, 0 }  ,draw opacity=1 ][fill={rgb, 255:red, 0; green, 0; blue, 0 }  ,fill opacity=0 ] [dash pattern={on 0.84pt off 2.51pt}]  (197.77,378.13) -- (447.84,378.13) ;
%Straight Lines [id:da9969815452745472] 
\draw [color={rgb, 255:red, 0; green, 0; blue, 0 }  ,draw opacity=1 ][fill={rgb, 255:red, 0; green, 0; blue, 0 }  ,fill opacity=0 ] [dash pattern={on 0.84pt off 2.51pt}]  (197.77,250.24) -- (447.84,250.24) ;
%Straight Lines [id:da17159630144486582] 
\draw [color={rgb, 255:red, 0; green, 0; blue, 0 }  ,draw opacity=1 ][fill={rgb, 255:red, 0; green, 0; blue, 0 }  ,fill opacity=0 ]   (279.04,83.98) -- (529.11,83.98) ;
%Straight Lines [id:da346455947771416] 
\draw [color={rgb, 255:red, 0; green, 0; blue, 0 }  ,draw opacity=1 ][fill={rgb, 255:red, 0; green, 0; blue, 0 }  ,fill opacity=0 ] [dash pattern={on 0.84pt off 2.51pt}]  (279.04,339.76) -- (529.11,339.76) ;
%Straight Lines [id:da07302310192818862] 
\draw [color={rgb, 255:red, 0; green, 0; blue, 0 }  ,draw opacity=1 ][fill={rgb, 255:red, 0; green, 0; blue, 0 }  ,fill opacity=0 ] [dash pattern={on 0.84pt off 2.51pt}]  (116.5,416.5) -- (279.04,339.76) ;
%Straight Lines [id:da24758749705725291] 
\draw [color={rgb, 255:red, 0; green, 0; blue, 0 }  ,draw opacity=1 ][fill={rgb, 255:red, 0; green, 0; blue, 0 }  ,fill opacity=0 ]   (366.57,416.5) -- (529.11,339.76) ;
%Straight Lines [id:da45130757910501207] 
\draw [color={rgb, 255:red, 0; green, 0; blue, 0 }  ,draw opacity=1 ][fill={rgb, 255:red, 0; green, 0; blue, 0 }  ,fill opacity=0 ]   (447.84,122.34) -- (447.84,378.13) ;
%Straight Lines [id:da3327973009346723] 
\draw [color={rgb, 255:red, 0; green, 0; blue, 0 }  ,draw opacity=1 ][fill={rgb, 255:red, 0; green, 0; blue, 0 }  ,fill opacity=0 ]   (197.77,122.34) -- (447.84,122.34) ;
%Straight Lines [id:da06290176074361153] 
\draw [color={rgb, 255:red, 0; green, 0; blue, 0 }  ,draw opacity=1 ][fill={rgb, 255:red, 0; green, 0; blue, 0 }  ,fill opacity=0 ] [dash pattern={on 0.84pt off 2.51pt}]  (241.53,416.5) -- (404.08,339.76) ;
%Straight Lines [id:da4273184725552468] 
\draw [color={rgb, 255:red, 0; green, 0; blue, 0 }  ,draw opacity=1 ][fill={rgb, 255:red, 0; green, 0; blue, 0 }  ,fill opacity=0 ] [dash pattern={on 0.84pt off 2.51pt}]  (329.06,122.34) -- (329.06,378.13) ;
%Straight Lines [id:da45271385396309327] 
\draw [color={rgb, 255:red, 0; green, 0; blue, 0 }  ,draw opacity=1 ][fill={rgb, 255:red, 0; green, 0; blue, 0 }  ,fill opacity=0 ]   (116.5,288.61) -- (366.57,288.61) ;
%Straight Lines [id:da2960966030609862] 
\draw [color={rgb, 255:red, 0; green, 0; blue, 0 }  ,draw opacity=1 ][fill={rgb, 255:red, 0; green, 0; blue, 0 }  ,fill opacity=0 ] [dash pattern={on 0.84pt off 2.51pt}]  (279.04,211.87) -- (529.11,211.87) ;
%Straight Lines [id:da0023386742407947025] 
\draw [color={rgb, 255:red, 0; green, 0; blue, 0 }  ,draw opacity=1 ][fill={rgb, 255:red, 0; green, 0; blue, 0 }  ,fill opacity=0 ] [dash pattern={on 0.84pt off 2.51pt}]  (116.5,288.61) -- (279.04,211.87) ;
%Straight Lines [id:da16894917194052317] 
\draw [color={rgb, 255:red, 0; green, 0; blue, 0 }  ,draw opacity=1 ][fill={rgb, 255:red, 0; green, 0; blue, 0 }  ,fill opacity=0 ]   (529.11,83.98) -- (529.11,339.76) ;

%Shape: Polygon [id:ds47513703151452114] 
\draw  [draw opacity=0][fill={rgb, 255:red, 234; green, 230; blue, 115 }  ,fill opacity=0.35 ] (529.63,211.89) -- (529.63,288.62) -- (529.63,339.78) -- (448.36,378.15) -- (329.58,378.15) -- (329.58,250.26) -- (404.08,211.13) -- cycle ;
%Shape: Polygon [id:ds008761964847981973] 
\draw  [draw opacity=0][fill={rgb, 255:red, 130; green, 150; blue, 246 }  ,fill opacity=0.51 ] (322.8,122.34) -- (322.8,250.24) -- (322.8,250.24) -- (255.92,281.34) -- (240.28,288.61) -- (115.25,288.61) -- (115.25,160.71) -- (196.52,122.34) -- (215.9,122.34) -- (239.03,122.34) -- cycle ;
%Straight Lines [id:da4189862798311761] 
\draw [line width=2.25]    (116.26,185.86) -- (116.74,399.36) ;
\draw [shift={(116.75,403.36)}, rotate = 269.87] [color={rgb, 255:red, 0; green, 0; blue, 0 }  ][line width=2.25]    (17.49,-5.26) .. controls (11.12,-2.23) and (5.29,-0.48) .. (0,0) .. controls (5.29,0.48) and (11.12,2.23) .. (17.49,5.26)   ;
\draw [shift={(116.26,185.86)}, rotate = 269.87] [color={rgb, 255:red, 0; green, 0; blue, 0 }  ][line width=2.25]    (17.49,-7.84) .. controls (11.12,-3.68) and (5.29,-1.07) .. (0,0) .. controls (5.29,1.07) and (11.12,3.68) .. (17.49,7.84)   ;
%Straight Lines [id:da21729483480755785] 
\draw [line width=2.25]    (326.27,149.86) -- (326.73,229.36) ;
\draw [shift={(326.75,233.36)}, rotate = 269.67] [color={rgb, 255:red, 0; green, 0; blue, 0 }  ][line width=2.25]    (17.49,-5.26) .. controls (11.12,-2.23) and (5.29,-0.48) .. (0,0) .. controls (5.29,0.48) and (11.12,2.23) .. (17.49,5.26)   ;
\draw [shift={(326.27,149.86)}, rotate = 269.67] [color={rgb, 255:red, 0; green, 0; blue, 0 }  ][line width=2.25]    (17.49,-7.84) .. controls (11.12,-3.68) and (5.29,-1.07) .. (0,0) .. controls (5.29,1.07) and (11.12,3.68) .. (17.49,7.84)   ;
%Straight Lines [id:da5893351461433329] 
\draw [line width=2.25]    (329.27,286.86) -- (329.73,365.36) ;
\draw [shift={(329.75,369.36)}, rotate = 269.67] [color={rgb, 255:red, 0; green, 0; blue, 0 }  ][line width=2.25]    (17.49,-5.26) .. controls (11.12,-2.23) and (5.29,-0.48) .. (0,0) .. controls (5.29,0.48) and (11.12,2.23) .. (17.49,5.26)   ;
\draw [shift={(329.27,286.86)}, rotate = 269.67] [color={rgb, 255:red, 0; green, 0; blue, 0 }  ][line width=2.25]    (17.49,-7.84) .. controls (11.12,-3.68) and (5.29,-1.07) .. (0,0) .. controls (5.29,1.07) and (11.12,3.68) .. (17.49,7.84)   ;

% Text Node
\draw (89,126) node [anchor=north west][inner sep=0.75pt]  [font=\LARGE] [align=left] {\textit{A{\scriptsize 000}}};
% Text Node
\draw (90,412) node [anchor=north west][inner sep=0.75pt]  [font=\LARGE] [align=left] {\textit{A{\scriptsize 020}}};
% Text Node
\draw (298,96) node [anchor=north west][inner sep=0.75pt]  [font=\LARGE] [align=left] {\textit{A{\scriptsize 101}}};
% Text Node
\draw (305,233) node [anchor=north west][inner sep=0.75pt]  [font=\LARGE] [align=left] {\textit{A{\scriptsize 111}}};
% Text Node
\draw (309,370) node [anchor=north west][inner sep=0.75pt]  [font=\LARGE] [align=left] {\textit{A{\scriptsize 121}}};
% Text Node
\draw (212,417) node [anchor=north west][inner sep=0.75pt]  [font=\LARGE] [align=left] {\textit{A{\scriptsize 120}}};
% Text Node
\draw (224,132) node [anchor=north west][inner sep=0.75pt]  [font=\LARGE] [align=left] {\textit{A{\scriptsize 100}}};

\end{tikzpicture}

\end{adjustbox}

\begin{comment}
     \begin{align*} 
	 \left[\begin{tikzcd}[scale cd=0.6, row sep=scriptsize,column sep=scriptsize]
    	A_{000} \arrow[d, tail] \\
    	A_{002}\end{tikzcd}\right] &=  \left[\begin{tikzcd}[scale cd=0.6, row sep=scriptsize,column sep=scriptsize]
    	A_{110} \arrow[d, tail] \\
    	A_{111}\end{tikzcd}\right] +  \left[\begin{tikzcd}[scale cd=0.6, row sep=scriptsize,column sep=scriptsize]
    	A_{111} \arrow[d, tail] \\
    	A_{112}\end{tikzcd}\right],  \\
	\end{align*}
 \end{comment}	

	 \begin{align*} 
	 \left[\begin{tikzcd}[scale cd=0.6, row sep=scriptsize,column sep=scriptsize]
    	A_{000} \arrow[d, tail] \\
    	A_{020}\end{tikzcd}\right] &=  \left[\begin{tikzcd}[scale cd=0.6, row sep=scriptsize,column sep=scriptsize]
    	A_{101} \arrow[d, tail] \\
    	A_{111}\end{tikzcd}\right] +  \left[\begin{tikzcd}[scale cd=0.6, row sep=scriptsize,column sep=scriptsize]
    	A_{111} \arrow[d, tail] \\
    	A_{121}\end{tikzcd}\right],  \\
	\end{align*}

	which we can write as
 \begin{comment}
	\begin{equation}\label{relation}
	[A_{002}] - [A_{000}] = [A_{112}] - [A_{110}].
	\end{equation}
 \end{comment}
	\begin{equation}\label{relation}
	[A_{020}] - [A_{000}] = [A_{121}] - [A_{101}].
	\end{equation}
 By Equation \eqref{eqn:mfdbundle}, we can represent these manifold bundles by a single manifold, for example their fibre over $\{ 0 \}$.

	Take the lower part of the 2-simplex to be given by the equalities in the vertical direction. Then we also have
\begin{comment}
 \begin{equation}\label{relation2}
	[A_{001}] - [A_{000}] = [A_{111}] - [A_{110}]=[A_{011}] - [A_{010}],
	\end{equation}
     \end{comment}
\begin{equation}\label{relation2}
	[A_{010}] - [A_{000}] = [A_{111}] - [A_{101}]=[A_{011}] - [A_{001}],
	\end{equation}
	where the last equation follows from \eqref{square relation}.
	Finally consider the cube with
 $A_{000}= A_{001}=\varnothing$
 %$A_{000}=A_{010}=\varnothing$
 \begin{comment}
\[
\left[\begin{tikzcd}[scale cd=0.6, row sep=scriptsize,column sep=scriptsize] 
&& \varnothing \arrow[dd, tail] \\
\varnothing \arrow[urr, squiggly, "\varnothing"] \arrow[dd, tail] &&\\
&&A_{011}\\
A_{001} \arrow[urr, squiggly, "W"] &&
\end{tikzcd} \right].
\]
 \end{comment}
\[
\left[\begin{tikzcd}[scale cd=0.6, row sep=scriptsize,column sep=scriptsize] 
&& \varnothing \arrow[dd, tail] \\
\varnothing \arrow[urr, squiggly, "\varnothing"] \arrow[dd, tail] &&\\
&&A_{011}\\
 A_{010} \arrow[urr, squiggly, "W"] &&
\end{tikzcd} \right].
\]

We deduce that 
\begin{comment} 
\begin{equation}\label{relation3}
	[A_{001}] - [\varnothing] = [A_{011}] - [\varnothing] ,
	\end{equation}
  \end{comment}
 \begin{equation}\label{relation3}
	[A_{010}] - [\varnothing] = [A_{011}] - [\varnothing] ,
	\end{equation}

and hence we obtain a relation that cobordant manifolds %$A_{001}$
 $A_{010}$ and $A_{011}$ represent the same element in $\pi_1 Y.$

The square relation in Equation \eqref{square relation} was shown in  \cite{SKpaper} to induce the $\SK^\d$ relation.

	At this point we have shown that cobordism and cut and paste relations are relations in $\pi_1 Y$. To show that these are the only relations, we need to check that the generic relation in Equation \eqref{relation} can be obtained using the cobordism and cut and paste relations.

  First note that using the cut and paste relation in a generic cube allows us to write
 $$[A_{020}] - [A_{000}]  = [A_{120}] - [A_{100}].$$
 From the cobordism relation, we know that 
 $$[A_{120}] - [A_{100}] = [A_{121}]- [A_{101}].$$
Combining these two equations we obtain the generic relation in Equation \eqref{relation}.
%from only cut and paste and cobordism relations. 

\end{proof}

\section{$K^\cube$ is a spectrum} \label{GammaSec}
We use Segal's machinery of $\Gamma$-spaces to show that the geometric realisation of the simplicial space $X_\bullet$ is an infinite loop space and hence gives an $\Omega$-spectrum. We first briefly recollect the basic definitions of $\Gamma$-spaces. 

Let $\Gamma^{\op}$ be a skeleton of the category of finite pointed sets. By this we mean that objects in $\Gamma^{\op}$ are given by the pointed sets $n_{+}=\{ *, 1, \ldots, n\}$ for non-negative integers $n$ and morphisms are pointed maps of sets. The object $0_{+}=\{ *\}$ is initial and terminal in $\Gamma^{\op}$. A \textit{$\Gamma$-space} is a covariant functor from $\Gamma^{\op}$ to the the category of pointed spaces $S_{*}$.
A functor from $\Gamma^{\op}$ to the the category $S^{\Delta^{\op}}$ of simplicial spaces is called a \textit{simplicial $\Gamma$-space}.

The $i$-th Segal map is a map $e_i \colon n_{+} \to 1_{+}$ defined by
 \[
 e_i(j)= 
 \begin{cases}
 *, ~j \neq i\\
 1, ~j=i.
 \end{cases}
 \]
 A $\Gamma$-space $A \colon \Gamma^{\op} \to S_{*}$ is called \textit{special} if the Segal map
  \[
  \prod_{i=1}^{n} A(e_i) \colon A(n_{+}) \longrightarrow \prod_{n} A(1_{+})
  \]
is a weak homotopy equivalence for all positive integers $n$. In this case the set $\pi_0(A(1_{+}))$ inherits the structure  of an abelian monoid with addition coming from the zig-zag
 $$\begin{tikzcd}
   A(1_{+}) \times A(1_{+}) &&  A(2_{+}) \arrow{ll}{\simeq}[swap]{A(e_1) \times A(e_2)} \arrow{r}{A(\triangledown)} & A(1_{+}),
 \end{tikzcd}$$
where the first map is the Segal map and the second map is induced by $\triangledown \colon 2_{+} \to 1_{+}$ with $\triangledown(1)=\triangledown(2)=1.$ $A$ is called \textit{very special} if it is special and the monoid $\pi_0(A(1_{+}))$ is a group.

Segal showed how to associate to a very special $\Gamma$-space $A$ an $\Omega$-spectrum whose infinite loop space is $A(1_{+})$ (see \cite{segal1974categories} and \cite{BF}). 

We now construct a $\Gamma$-space $A$ such that $A(1_{+})$ is the %\rout{fat }
geometric realisation of $X_\bullet$. 
Let us set up some terminology. We say that an element $\alpha$ in $X_k$ is \textit{decomposable} if there exist non-zero elements $\beta, \gamma  \in X_k$ such that $\alpha=\beta \sqcup \gamma,$ where by a zero element we mean the degenerate $k$-simplex induced from the zero simplex given by the empty manifold $\varnothing \in X_0$. Recall that any element in $X_k$ is given by a $k \times k \times k$ cube diagram  over $\Delta^k$ - by the disjoint union of two such cubes we mean the cube given by the vertex-, edge-, and face-wise disjoint union (if it exists, i.e. if the corresponding embeddings into $\R^{\infty}$ are disjoint).
An example of a decomposable element in $X_1$ is depicted below (in this diagram edges  and faces of the cube should also be marked by the corresponding disjoint unions, but we have omitted these).

\begin{tikzcd}[scale cd=0.6, row sep=small,column sep=small]& 
    	A_0' \sqcup A_1'\arrow[rr, hook]\arrow[dd, tail] & & B_0' \sqcup B_{1}'\arrow[dd, tail] \\
    A_0 \sqcup	A_1 \arrow[rr,crossing over, hook] \arrow[ur, squiggly] \arrow[dd, tail] & & B_0 \sqcup B_1 \arrow[ur, squiggly]\\
    	& C_0' \sqcup C_{1}'\arrow[rr, hook] &  & D_0' \sqcup D_{1}' \\
    	C_0 \sqcup C_{1}\arrow[rr, hook] \arrow[ur, squiggly] & & D_0 \sqcup D_{1}\arrow[from=uu,crossing over, tail] \arrow[ur, squiggly]\\
    	\end{tikzcd}
    	\hspace{0.01cm}
    	$=$
    	\hspace{0.01cm}
\begin{tikzcd}[scale cd=0.6, row sep=small,column sep=small]& 
    	A_0'\arrow[rr, hook]\arrow[dd, tail] & & B_{0}'\arrow[dd, tail] \\
    	A_0\arrow[rr,crossing over, hook] \arrow[ur, squiggly] \arrow[dd, tail] & &  B_0 \arrow[ur, squiggly]\\
    	& C_{0}'\arrow[rr, hook] &  & D_{0}' \\
    	C_{0}\arrow[rr, hook] \arrow[ur, squiggly] & & D_{0}\arrow[from=uu,crossing over, tail] \arrow[ur, squiggly]\\
    	\end{tikzcd}
    	\hspace{0.01cm}
    	$\sqcup$
    	\hspace{0.01cm}
    	\begin{tikzcd}[scale cd=0.6, row sep=small,column sep=small]& 
    	A_1'\arrow[rr, hook]\arrow[dd, tail] & & B_{1}'\arrow[dd, tail] \\
    	A_1\arrow[rr,crossing over, hook] \arrow[ur, squiggly] \arrow[dd, tail] & &  B_1 \arrow[ur, squiggly]\\
    	& C_{1}'\arrow[rr, hook] &  & D_{1}' \\
    	C_{1}\arrow[rr, hook] \arrow[ur, squiggly] & & D_{1}\arrow[from=uu,crossing over, tail] \arrow[ur, squiggly]\\
    	\end{tikzcd}

Since we require all manifolds to be compact, every vertex of a cube diagram has only finitely many connected components. Hence every element in $X_k$ can be written as a disjoint union of finitely many indecomposable elements. It is not difficult to see that such a decomposition is unique up to permutation
and we will call the indecomposible summands of an element its \textit{indecomposable components}.

We define $\tilde{A}(n_{+})_{\bullet}$ to be a simplicial  set for which the  set of $k$-simplices $\tilde{A}(n_{+})_k$ given by pairs $(\alpha, L)$, where  $\alpha \neq \varnothing \in X_k$ and $L$ is a labelling of indecomposable components of $\alpha$ by non-basepoint elements of $n_{+}.$ For $n=0$ the simplicial  set $\tilde{A}(0_{+})_{\bullet}$ consists of one point being the empty manifold $\varnothing$. We treat the $\varnothing \in \tilde{A}(n_{+})_0$ (and the corresponding degenarate simplices) differently than others and don't equip it with any labelling.  
Note that $\tilde{A}(1_{+})_{\bullet}$ is just $X_\bullet$.

\begin{lemmasection}
The simplicial  sets $\{ \tilde{A}(n_{+})_{\bullet }\}_{n \geq 0}$ assemble into a simplicial $\Gamma$- set.
\end{lemmasection}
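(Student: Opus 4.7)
The plan is to construct, for each morphism $f\colon n_+ \to m_+$ in $\Gamma^{\op}$, a simplicial map $\tilde{A}(f)_\bullet \colon \tilde{A}(n_+)_\bullet \to \tilde{A}(m_+)_\bullet$, and then to verify continuity on each simplicial level, compatibility with face and degeneracy maps, and functoriality in $f$.

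First I would define the map on $k$-simplices. Given $(\alpha, L) \in \tilde{A}(n_+)_k$ with $\alpha \neq \varnothing$, decompose $\alpha = \alpha_1 \sqcup \cdots \sqcup \alpha_r$ into its indecomposable components, each carrying a label $L(\alpha_i) \in n_+ \setminus \{*\}$. Set
\[
\tilde{A}(f)_k(\alpha, L) \coloneqq \Bigl(\bigsqcup_{\,i \,:\, f(L(\alpha_i))\neq *} \alpha_i,\; f\circ L\Bigr),
\]
and declare the result to be $\varnothing \in \tilde{A}(m_+)_k$ if every label is sent to the basepoint; also send $\varnothing \mapsto \varnothing$. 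Well-definedness on the level of sets is straightforward: the $\alpha_i$ are already disjointly embedded in $\R_+ \times \R^\infty$ (or its product with $[0,1]$), so any sub-disjoint-union is naturally an element of $X_k$, and the indecomposable decomposition (and hence the labelling) is unique up to permutation.

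Next I would check that $\tilde{A}(f)_k$ is continuous. The building blocks $X_{1,1,1}$ are indexed by diffeomorphism types of cubes, so the subset of $X_k$ consisting of cubes with a prescribed combinatorial pattern of indecomposable components forms a union of connected components of $X_k$, on which the indecomposable decomposition is locally constant. Restricted to such a component, $\tilde{A}(f)_k$ is the forgetful map that selects a fixed subset of indecomposable pieces; this is continuous because the projection from a product of $X_{1,1,1}$'s onto a sub-product is continuous. Since $X_k$ is endowed with the subspace topology from $(X_{1,1,1})^{k^3}$ for $k\ge 1$ and analogous topologies for $k=0$, continuity follows.

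The compatibility with face and degeneracy maps is then easy: the simplicial structure maps on $X_\bullet$ delete or duplicate a row, column, or depth-slice of the cubical diagram, and these operations commute levelwise with disjoint union of cubes, so in particular they respect the indecomposable decomposition; hence the face/degeneracy maps intertwine $\tilde{A}(f)_\bullet$ and relabelling commutes with discarding basepoint-labelled pieces. Functoriality $\tilde{A}(g)_\bullet \circ \tilde{A}(f)_\bullet = \tilde{A}(g\circ f)_\bullet$ and $\tilde{A}(\id)_\bullet = \id$ is immediate from the pointwise description, since a component $\alpha_i$ survives in the composite exactly when $g(f(L(\alpha_i)))\neq *$, with final label $g(f(L(\alpha_i)))$.

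The main obstacle is the continuity statement, because the T-cup and subspace topologies on the spaces $X_{k,l,m}$ are somewhat delicate and one has to argue that the indecomposable decomposition varies continuously. This reduces to the observation that for compact manifolds the condition ``two embedded pieces are disjoint'' is an open condition in the Whitney $C^\infty$-topology, which ensures that a small enough neighbourhood of any $\alpha \in X_k$ consists entirely of cubes sharing the same combinatorial decomposition into indecomposables. Once this local-constancy is in hand, continuity of $\tilde{A}(f)_k$ is automatic.
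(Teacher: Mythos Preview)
Your proposal is correct and follows essentially the same approach as the paper: define $\tilde{A}(f)$ by relabelling indecomposable components via $f$ and discarding those sent to the basepoint. The paper's proof is in fact much terser than yours---it simply records this definition and stops---so your additional checks of continuity, simplicial compatibility, and functoriality are not a deviation but rather a fleshing-out of details the paper leaves implicit.
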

\begin{proof}
Given a morphism $f \colon m_{+} \to n_{+}$ of based sets we define the induced map 
\[
\tilde{A}(m_{+})_k \to \tilde{A}(n_{+})_k
\]
\[
(\alpha, L) \mapsto (\alpha', L')
\]
in the following way. We first relabel the indecomposable components of $\alpha$, i.e., the components that had label $i \in \{1, \ldots , m\}$ will be labelled by $f(i)$ instead. Next we remove those components that got labelled by the basepoint and call the resulting element $(\alpha', L')$. 
\end{proof}

\begin{lemmasection}
The simplicial $\Gamma$-set $\tilde{A}$ is levelwise special in the sense that the $\Gamma$-set $n_{+} \mapsto \tilde{A}(n_{+})_k$ is special for every $k$.
\end{lemmasection}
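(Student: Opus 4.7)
The plan is to exhibit, for every $n \geq 1$ and every $k \geq 0$, a homotopy inverse to the Segal map
\[
\sigma_n \colon \tilde{A}(n_{+})_k \longrightarrow \prod_{i=1}^n \tilde{A}(1_{+})_k = \prod_{n} X_k,
\]
using the infinite-dimensionality of the ambient $\mathbb{R}_+ \times \mathbb{R}^\infty$. Unpacking definitions, $\sigma_n$ sends a labelled pair $(\alpha, L)$ to $(\alpha_1, \ldots, \alpha_n)$, where $\alpha_i$ is the sub-disjoint-union of indecomposable components of $\alpha$ carrying the label $i$ (and $\varnothing$ if there are none). This is well-defined as a map of spaces because every compact manifold has only finitely many components, so the decomposition into indecomposable summands is unique and depends continuously on the embedding.

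First I would fix, once and for all, $n$ disjoint orientation-preserving affine embeddings $\psi_1, \ldots, \psi_n \colon \mathbb{R}_+ \times \mathbb{R}^\infty \hookrightarrow \mathbb{R}_+ \times \mathbb{R}^\infty$ that preserve the boundary hyperplane $\{0\} \times \mathbb{R}^\infty$ and have pairwise disjoint images (for instance, by splitting $\mathbb{R}^\infty$ into $n$ infinite-dimensional subspaces according to residue classes of coordinates, and translating into each). Using these, define a candidate homotopy inverse
\[
\tau_n \colon \prod_n X_k \longrightarrow \tilde{A}(n_{+})_k, \qquad (\beta_1, \ldots, \beta_n) \longmapsto \Bigl( \bigsqcup_{i=1}^n \psi_i(\beta_i),\, L \Bigr),
\]
where $L$ labels each component coming from $\beta_i$ by $i$ (sending the all-empty tuple to $\varnothing$). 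This is continuous in the Whitney $C^\infty$-topology.

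The remaining and main step is to show that both composites $\sigma_n \circ \tau_n$ and $\tau_n \circ \sigma_n$ are homotopic to the identity. The composite $\sigma_n \circ \tau_n$ sends $(\beta_1, \ldots, \beta_n)$ to $(\psi_1(\beta_1), \ldots, \psi_n(\beta_n))$; since the space of orientation-preserving affine self-embeddings of $\mathbb{R}_+ \times \mathbb{R}^\infty$ fixing the boundary hyperplane is contractible, each $\psi_i$ can be affinely isotoped to the identity, and these isotopies act independently on the different factors. I expect the harder direction to be $\tau_n \circ \sigma_n \simeq \id$: here the components of $\alpha$ must be moved simultaneously from their original embeddings to the standardised positions determined by the $\psi_i$, and the isotopy must remain inside $\tilde{A}(n_{+})_k$, i.e.\ the components must stay pairwise disjoint throughout. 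The key fact is the standard Segal-style observation that for every $N$, the space of $N$-tuples of pairwise disjoint affine embeddings $\bigsqcup_N (\mathbb{R}_+ \times \mathbb{R}^\infty) \hookrightarrow \mathbb{R}_+ \times \mathbb{R}^\infty$ is contractible; applying this to $N$ equal to the number of indecomposable components of $\alpha$, with the two endpoints being the original embedding of $\alpha$ and the one produced by $\tau_n \circ \sigma_n$, yields the required isotopy, varying continuously and naturally in $(\alpha, L)$.

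Finally, the case $n = 0$ is immediate since both $\tilde{A}(0_{+})_k$ and the empty product are a single point. Combining these, the Segal map is a weak homotopy equivalence for all $n \geq 0$ and all $k$, proving that $\tilde{A}$ is levelwise special.
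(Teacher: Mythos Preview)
Your strategy—build an explicit homotopy inverse $\tau_n$ from disjoint affine embeddings $\psi_i$—is in the same spirit as the paper's, and the direction $\sigma_n\circ\tau_n\simeq\id$ is fine. The gap is in $\tau_n\circ\sigma_n\simeq\id$. You invoke contractibility of the space of $N$-tuples of pairwise \emph{disjoint} affine self-embeddings of $\mathbb{R}_+\times\mathbb{R}^\infty$ and say the original embedding of $\alpha$ is one endpoint of a path in that space. It is not: the original configuration of the $N$ indecomposable components corresponds to the tuple $(\id,\ldots,\id)$, whose images all coincide, so this point does not lie in the space you named. Even reinterpreting the claim in terms of embeddings of the manifold $\alpha$ itself, abstract contractibility only tells you a path exists; it does not by itself give a path varying continuously in $(\alpha,L)$, which is what you need for a homotopy on all of $\tilde{A}(n_+)_k$.

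The paper avoids this by first identifying the image of $\sigma_n$ with the subspace $B\subset\prod_n X_k$ of tuples with pairwise disjoint entries, and then exhibiting $B$ as a weak deformation retract of the product using the concrete shift map $F_r\colon\mathbb{R}^\infty\to\mathbb{R}^\infty$, $(x_1,x_2,\ldots)\mapsto(r,x_1,x_2,\ldots)$. Applying $F_{r_i}$ to the $i$-th factor (with the $r_i$ distinct) forces disjointness, and the explicit form of $F_r$ makes the homotopy back to the identity visibly stay in $B$: one may first shift \emph{all} factors by the same injective homotopy $\id\simeq F_0$ (injectivity preserves disjointness of already-disjoint tuples), then slide the inserted first coordinate from $0$ to $r_i$ (distinct first coordinates keep the entries disjoint throughout). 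If you take $\psi_i=F_{r_i}$ in your setup, this same two-step homotopy repairs your argument; the moral is that the specific choice of shift maps, rather than the mere existence of disjoint affine embeddings, is what makes the disjointness condition persist through the homotopy.
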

\begin{proof}
We need to show that for every $k$ the Segal map is a weak homotopy equivalence
\begin{align*}
\tilde{A}(n_{+})_k & \xrightarrow{\prod_{i=1}^{n} \tilde{A}(e_i)} \displaystyle 
   \prod_{n} \tilde{A}(1_{+})_k\\
   (\alpha, L) & \longmapsto  (\alpha_1, \ldots, \alpha_n),
\end{align*}
where $\tilde{A}(e_i)$ sends $(\alpha, L)$ to $\alpha_i$, the union of indecomposable components of $\alpha$ that are labelled by $i$. Note that in $\tilde{A}(1_{+})$ there is no choice for the labelling, all components have to be labelled by the element $1$.
The image of the Segal map is given by the subset
\[
B\coloneqq \{(\beta_1, \ldots, \beta_n) \in \displaystyle \prod_{n} \tilde{A}(1_{+})_k ~|~ \beta_i \text{~disjoint from } \beta_j \text{ for } i \neq j\},
\]
where two $k \times k \times k$ diagrams $\beta$ and $\gamma$ are called disjoint if the cobordisms (viewed as subsets in $[0,1] \times \R_+\times\R^\infty$) coming from the corresponding edges of the diagrams $\beta$ and $\gamma$ are disjoint. More precisely, if we denote by $\beta_{a,b,[c, c+1]}$ the restriction of the diagram $\beta$ to the edge that connects the vertices $(a,b,c)$ and $(a,b,c+1)$ and encodes an embedded cobordism, then we require the cobordisms $\beta_{a,b,[c, c+1]}$ and $\gamma_{a,b,[c, c+1]}$ to be disjoint for all $a,b \in \{0 , \ldots , k\}$ and $c \in \{0 , \ldots , k-1 \}$. 

We show that $B$ is a weak deformation retract of the product  set $\prod_{n} \tilde{A}(1_{+})_k$. Analogous to the way it was done in \cite{nguyen2017infinite} for the usual cobordism category, we use the ``shift-map'' to make the diagrams disjoint. For the sake of completeness we present the proof here, but we recommend to check \cite{nguyen2017infinite} for a beautiful visualisation of the shift-map. 

For $r \in \R$ consider the \textit{shift-map}
 \[
 F_r \colon \R^{\infty} \to \R^{\infty}
 \]
 \[
 (x_1, x_2, \ldots) \mapsto (r, x_1, x_2, \ldots).
 \]
It is clear that $F_r$ is homotopic to the identity map.  For any non-negative integer $k$ the map $F_r$ induces a self-map of $X_k=\tilde{A}(1_{+})_k$ by taking the $k \times k \times k$ diagram  over $\Delta^k$ $\alpha \in X_k$ and postcomposing all of its vertices, edges, and faces (all of these are represented by the embeddings of manifolds/cobordisms) with $F_r$. Namely, given an embedding of a cobordism $W$ into $[0,1] \times \R_+\times\R^\infty$ we modify it by postcomposing with
\[
\id_{[0,1]} \times \id_{\R_+} \times F_r \colon [0,1] \times \R_+\times\R^\infty \to [0,1] \times \R_+\times\R^\infty.
\]
Analogously the postcomposition with $F_r$ is defined for the embedded manifolds, SK-embeddings of manifolds, and SK-embeddings of cobordisms. These actions agree and assemble into a new diagram $F_r \circ \alpha \in X_k.$ The induced map $X_k \to X_k$ is again homotopic to the identity.

Choose $n$ pairwise different real numbers $r_1, \ldots, r_n$ and consider the map 
\[
\prod_{n} \tilde{A}(1_{+})_k \to \prod_{n} \tilde{A}(1_{+})_k
\]
given by applying $F_{r_i}$ to the $i$-th factor. This map is the desired deformation retraction.
\end{proof}
 
 \begin{cor}
The $\Gamma$-space $A\colon \Gamma^{\op} \to S_{*}$ defined by realising $\tilde{A}$ at every $n$, i.e. $$A(n_{+}) \coloneqq ||\tilde{A}(n_{+})_{\bullet}||,$$ is special.

 \end{cor}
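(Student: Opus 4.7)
The plan is to bootstrap the levelwise Segal equivalence established in the preceding lemma to an equivalence after taking fat geometric realisation. More concretely, recall that what we need to verify is that for every positive integer $n$ the Segal map
\[
A(n_+) = \bigl\|\tilde A(n_+)_\bullet\bigr\| \;\longrightarrow\; \prod_{n} \bigl\|\tilde A(1_+)_\bullet\bigr\| \;=\; \prod_{n} A(1_+)
\]
is a weak homotopy equivalence. The previous lemma already gives us, for every non-negative integer $k$, a weak equivalence
\[
\tilde A(n_+)_k \;\xrightarrow{\;\simeq\;}\; \prod_{n} \tilde A(1_+)_k
\]
realised by the projections associated with the Segal maps $e_i\colon n_+\to 1_+$.

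The first step is to take the fat geometric realisation of this levelwise equivalence in the simplicial direction. Since fat realisation preserves levelwise weak equivalences of simplicial spaces (this is one of the main advantages of the fat realisation over the thin one; see for instance \cite{ebert2019semisimplicial}, in particular the foundational results recalled there), we obtain a weak equivalence
\[
\bigl\|\tilde A(n_+)_\bullet\bigr\| \;\xrightarrow{\;\simeq\;}\; \Bigl\|\bigl(\textstyle\prod_{n} \tilde A(1_+)\bigr)_\bullet\Bigr\|.
\]
The second step is to commute fat realisation with the finite product. For simplicial spaces, the natural map
\[
\Bigl\|\bigl(\textstyle\prod_{n} \tilde A(1_+)\bigr)_\bullet\Bigr\| \;\longrightarrow\; \prod_{n}\bigl\|\tilde A(1_+)_\bullet\bigr\|
\]
is a weak equivalence; this is the standard fact that fat realisation commutes with finite products (again \cite{ebert2019semisimplicial}). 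Composing the two displayed maps yields precisely the Segal map for $A$, which is therefore a weak equivalence, proving that $A$ is special.

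The main thing to be careful about is that the two ingredients we need, preservation of levelwise equivalences and commutation with finite products, both genuinely require the fat realisation rather than the ordinary one (otherwise one would have to assume Reedy cofibrancy conditions on the simplicial spaces $\tilde A(n_+)_\bullet$, which are not obvious in our setting because of the T-cup topology). Since we have already committed to working with fat realisations from the beginning of Section \ref{pi_0ComputationSec}, this causes no additional difficulty, and no further model-categorical hypotheses on the spaces $\tilde A(n_+)_k$ are needed. I do not expect any serious obstacle beyond carefully citing the two facts above.
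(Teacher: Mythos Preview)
Your proposal is correct and follows the same approach as the paper: both use that fat realisation takes levelwise weak equivalences of simplicial spaces to weak equivalences. The paper's proof is a one-liner that writes the target of the realised Segal map directly as $\prod_n \|\tilde A(1_+)_\bullet\|$, leaving implicit the step you carefully isolate, namely that the canonical map $\|\prod_n \tilde A(1_+)_\bullet\| \to \prod_n \|\tilde A(1_+)_\bullet\|$ is a weak equivalence; your version is more explicit on this point but otherwise identical in strategy.
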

 \begin{proof}
 Simplicial map that is degreewise weak homotopy equivalence induces weak homotopy equivalence on fat realisations, hence
  $$\begin{tikzcd}
   ~|| \tilde{A}(n_{+})_{\bullet}|| \arrow{rr}{\prod_{i=1}^{n} \tilde{A}(e_i)}[swap]{\simeq} &&  \displaystyle \prod_{n} ||\tilde{A}(1_{+})_{\bullet}||. 
\end{tikzcd}$$ 
 \end{proof} 
Finally, the space $||\tilde{A}(1_{+})_{\bullet}||$ is connected and therefore  the $\Gamma$-space $A$ is very special and we conclude that $||X_\bullet||$ is an infinite loop space as was claimed.

\section{Maps from $B \overline{\Cob}_{n+1}^{\partial_{triv}}$ and $K^{\square}(\mbar)$} \label{induced maps section}

In this section we consider maps into the constructed spectrum $K^{\cube}(\mbar)$.

The simplicial  set $X^*_{0,0,\bullet}$ is the nerve of a unitalisation (in the sense of \cite{ebert2019semisimplicial}) of a  parameterised version (see \cite{raptis2017parametrized}) of the cobordism category with trivial boundary $\Cob_{n+1}^{\partial_{triv}}$ described in Section \ref{section:fibrations}. Note that the parametrised cobordism category is defined as a simplicial object in non-unital categories in \cite{raptis2017parametrized}, and we here use a unital version.
We write $(N_\bullet \Cob_{n+1}^{\partial_{triv}})_*$ for the bisimplicial set that is the parametrised nerve of the unital trivial boundary cobordism category.
\footnote{Proposition 3.8 in \cite{ebert2019semisimplicial} shows that unitalisation induces weak homotopy equivalences on classifying spaces for weakly unital topological categories, although this is a slightly different setting. }

%that arises from applying both these operations to $\Cob_{n+1}^{\partial_{triv}}$.}
Let $i$ be the inclusion map  
$$i: (N_\bullet \Cob_{n+1}^{\partial_{triv}})_* \rightarrow X^*_{\bullet, \bullet, \bullet}.$$
Note that on $\pi_0$ of realisations this map is trivial since  $|X^*_{\bullet, \bullet, \bullet}|$ is connected.  Denote by $B\C_{n+1}$ the realisation $|(N_\bullet \Cob_{n+1}^{\partial_{triv}})_*|$.

\begin{thm}
For

$\Omega|i| \colon \Omega B\C_{n+1} \rightarrow  K^\cube (\mbar)$ the map induced by the inclusion, $$
\pi_1 |i|: \pi_1 B\C_{n+1} \rightarrow \pi_1 | X^*_{\bullet, \bullet, \bullet}| \cong \SKbar_{n}^\partial$$ is the 0 map.
\end{thm}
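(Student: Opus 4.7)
The approach is to identify the image of a generator of $\pi_1 B\overline{\Cob}_{n+1}^{\partial_{triv}}$ under the isomorphism $\pi_1\|X^\delta_{\bullet,\bullet,\bullet}\|\cong\SKbar_n^\partial$ from Theorem \ref{thm:pi_0} and show that it must already be trivial by the relations established in that proof.

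First, I would observe that the inclusion $i$ factors as
\[
N_\bullet \overline{\Cob}_{n+1}^{\partial_{triv}}\;\hookrightarrow\; X^\delta_{0,0,\bullet}\;\hookrightarrow\; X^\delta_{\bullet,\bullet,\bullet},
\]
so that on the diagonal simplicial space every 1-simplex coming from the cobordism category is sent to a $1\times 1\times 1$ cube in which both SK-embedding directions are given by identity maps. Concretely, a cobordism $W\colon M\to M'$ is sent to the cube
\[
\left[\begin{tikzcd}[scale cd=0.6, row sep=scriptsize,column sep=scriptsize] & M' \arrow[rr, equal] && M' \\ M \arrow[ur, squiggly, "W"] \arrow[rr, equal] && M  \arrow[ur, squiggly, "W"]& \end{tikzcd} \right].
\]

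Next, I would recall the presentation of $\pi_1\|X^\delta_{\bullet,\bullet,\bullet}\|$ used in the proof of Theorem \ref{thm:pi_0}: after contracting vertices via the 2-simplices \eqref{eqn:contracting}, the fundamental group is generated by the 1-simplices of $X^\delta_\bullet$ modulo the relations induced by 2-simplices. In that argument, the relations \eqref{cobW0}, \eqref{cobW1} and \eqref{cob-empty} show that \emph{any} pure-cobordism 1-simplex of the above shape is equivalent in $\pi_1 Y$ to the empty cobordism 1-simplex, and relation \eqref{cob-length} then shows that this empty cobordism class is trivial.

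Combining these two observations, every edge in the image of $\|i\|$ represents the zero element under the isomorphism $\pi_1\|X^\delta_{\bullet,\bullet,\bullet}\|\cong \SKbar_n^\partial$. Since any class in $\pi_1 B\overline{\Cob}_{n+1}^{\partial_{triv}}$ is represented by a loop built out of such 1-simplices, its image is a product of trivial generators, and hence is zero. The only mildly delicate point is bookkeeping: making sure that a loop in $B\overline{\Cob}_{n+1}^{\partial_{triv}}$ -- represented by a concatenation of edges together with degenerate fillers -- actually maps to a word in the generators of $\pi_1 Y$ described above, rather than to some composite that requires additional relations. This follows because the image of $i$ is literally a subcomplex of $X^\delta_\bullet$, so the edge-path description of $\pi_1$ transports directly, and the triviality of each individual edge class suffices.
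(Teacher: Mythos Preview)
Your proposal is correct and follows essentially the same approach as the paper: identify the image of a 1-simplex from the cobordism category as a pure-cobordism cube in $X^\delta_\bullet$, then invoke the relations \eqref{cob-empty} and \eqref{cob-length} from the proof of Theorem~\ref{thm:pi_0} to see that such cubes are trivial in $\pi_1 Y$. The paper's version is slightly terser and explicitly appeals to the argument of Lemma~\ref{2connected} (applied to $N_\bullet\overline{\Cob}_{n+1}^{\partial_{triv}}$) to justify that every class in $\pi_1$ is a concatenation of edge classes, which you handle via the edge-path description in your final paragraph.
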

\begin{proof}
%Note that the simplicial map $\pi_0 \colon  (N_\bullet \Cob_{n+1}^{\partial_{triv}})_* \to N_\bullet h\overline{\Cob}_{n+1}^{\partial_{triv}}$} given by taking connected components levelwise is a $\pi_1$-isomorphism on the (fat) geometric realisation by the same argument as presented in the proof of Lemma \ref{2connected}. Therefore, any element in $\pi_1 ||N_\bullet \overline{\Cob}_{n+1}^{\partial_{triv}}||$ is given by the concatenation of composable 1-simplices, where any 1-simplex is given by a cobordism. 
 $ B\C_{n+1}$ is the realisation of a bisimplicial set, hence $\pi_1 B\C_{n+1}$ is generated by 1-simplices in the diagonal modulo relations imposed by 2-simplices in the diagonal.
Under the map $i$, any 1-simplex in $(N_\bullet \Cob_{n+1}^{\partial_{triv}})_*$ is mapped to a cube of the shape $\left[\begin{tikzcd}[scale cd=0.6, row sep=scriptsize,column sep=scriptsize] & B' \\
    	B \arrow[ur, squiggly, "W"] & \end{tikzcd} \right]$ 
in $X^*_{\bullet, \bullet, \bullet}$, which by the proof Theorem \ref{thm:pi_0} is zero in $ | X^*_{\bullet, \bullet, \bullet}|.$ Hence, $\pi_1 |i|$ is the zero map.

%Recall that to determine $\pi_1 || X^\delta_{\bullet, \bullet, \bullet}||$ in Theorem \ref{thm:pi_0} we first identified $|| X^\delta_{\bullet, \bullet, \bullet}||$ with the diagonal $|| X^\delta_{\bullet}||$ and used that $\pi_1 || X^\delta_\bullet|| \cong \pi_1 Y$. 

%Any 1-simplex in $||N_\bullet \overline{\Cob}_{n+1}^{\partial_{triv}}||$ is mapped to the cube of the shape $\left[\begin{tikzcd}[scale cd=0.6, row sep=scriptsize,column sep=scriptsize] & B' \\B \arrow[ur, squiggly, "W"] & \end{tikzcd} \right]$ in $|| X^\delta_\bullet||$, which by the proof Theorem \ref{thm:pi_0} is zero in $\pi_1(Y).$ Hence, $\pi_1 ||i||$ is the zero map.
\end{proof}

  Another (tri)simplicial subset of $X^*_{\bullet, \bullet, \bullet}$ that is of interest to us is $X^*_{\bullet, \bullet, 0}$, defined in Section \ref{X_110}
  A spectrum associated to the infinite loop space $\Omega | X^*_{\bullet, \bullet, 0}|$ is the topological version of the cut and paste spectrum $K^{\square}(\Mnfldbd_n)$ defined in \cite{SKpaper} and will be denoted by $K^{\square}(\mbar)$. Here the bar denotes that we are taking topology on manifolds into account by means of the extra simplicial direction.
  
 % Here $\mbar$ is a topologically enriched category of manifolds with boundary with morphisms given by $\SK$-embeddings. }
 %Another (bi)simplicial subspace of $X^\delta_{\bullet, \bullet, \bullet}$ that is of interest to us is $X^\delta_{\bullet, \bullet, 0}.$ 
 %A spectrum associated to the infinite loop space $\Omega || X^\delta_{\bullet, \bullet, 0}||$ is the topological version of the cut and paste spectrum $K^{\square}(\Mnfldbd_n)$ defined in \cite{SKpaper} and will be denoted by $K^{\square}(\mbar)$. Here $\mbar$ is a topologically enriched category of manifolds with boundary with morphisms given by $\SK$-embeddings. 
Recall that $K_0^{\square}(\Mnfldbd_n) \cong \SK_{n}^\d.$ We show that the topological cut and paste spectrum also has $\SK_{n}^\d$ as its zeroth homotopy group.
 \begin{thm}\label{thm:normalSK}
 There is an isomorphism
 \[
 K_0^{\square}(\mbar) \cong \SK_{n}^\d.
 \]
 \end{thm}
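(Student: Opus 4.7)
The plan is to mimic the argument used for Theorem \ref{thm:pi_0}, but restricted to the bisimplicial subspace $X^\delta_{\bullet,\bullet,0}\subset X^\delta_{\bullet,\bullet,\bullet}$ where the cobordism direction has been collapsed. Since there is no longer a squiggly simplicial direction, only the cut and paste relations (and not the trivial boundary cobordism relations) will survive in $\pi_0K^\square(\mbar)$, giving $\SK_n^\partial$ on the nose.

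First I would pass from the bisimplicial space $X^\delta_{\bullet,\bullet,0}$ to its diagonal simplicial space, whose fat realisation is homotopy equivalent to $||X^\delta_{\bullet,\bullet,0}||$ by the standard bisimplicial diagonal theorem cited earlier (Theorem 7.1 of \cite{ebert2019semisimplicial}). Exactly as in Lemma \ref{2connected}, the levelwise $\pi_0$-map from this diagonal to the simplicial set $\pi_0 X^\delta_{\bullet,\bullet,0}$ is $2$-connected, so
\[
\pi_0 K^\square(\mbar) \;=\; \pi_1 ||X^\delta_{\bullet,\bullet,0}|| \;\cong\; \pi_1\bigl| \pi_0 X^\delta_{\bullet,\bullet,0}\bigr| =:\pi_1 Y'.
\]
I would then contract to a single basepoint as in Theorem \ref{thm:pi_0}: each $1$-simplex $[\,A\hookrightarrow B\,]$ fits into the canonical $2$-simplex $[\,\varnothing\hookrightarrow A\hookrightarrow B\,]$, and the resulting cone on the space of horizontal $1$-simplices can be collapsed leaving the homotopy type unchanged.

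Next I would identify a generating set and the relations on $\pi_1 Y'$. Using the same $2$-simplices as in Theorem \ref{thm:pi_0} (those whose proofs did not involve any squiggly arrows), a generic square is equated to a vertical $1$-simplex $\bigl[\,A\rightarrowtail C\,\bigr]$; composition with the map $\varnothing\rightarrowtail A$ identifies this with $[C]-[A]$ where $[A] := \bigl[\,\varnothing\rightarrowtail A\,\bigr]$. The disjoint union square
\[
\left[\begin{tikzcd}[column sep=tiny, row sep=tiny] \varnothing \arrow[rr,hook]\arrow[dd,tail] && A \arrow[dd,tail] \\ \\ B \arrow[rr,hook] && A\sqcup B\end{tikzcd}\right]
\]
gives $[A\sqcup B]=[A]+[B]$, so the group operation is disjoint union. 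The square relation $[A]+[D]=[B]+[C]$ obtained from a generic cut and paste square is exactly the $\SK^\partial$-relation, as was shown in \cite{SKpaper}. Hence there is a surjection $\SK_n^\partial \twoheadrightarrow \pi_1 Y'$.

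To finish, I would verify that no further relations appear, by taking a generic $2$-simplex (a $2\times 2$ diagram of distinguished squares) and checking, using the square relations already in hand, that the relation it induces between the three boundary $1$-simplices is a consequence of the $\SK^\partial$ relation. This is the same bookkeeping as in the last paragraph of the proof of Theorem \ref{thm:pi_0}, and in fact is strictly easier because there are no cubes with squiggly edges to handle. The main technical point is checking that the argument in Theorem \ref{thm:pi_0} which produces each relation (cone contraction, generator reduction, square relation) only uses $2$-simplices that live entirely in $X^\delta_{\bullet,\bullet,0}$ — i.e.\ can be built from identities in the cobordism direction — so that the proof carries over to the bisimplicial setting. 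Once this is verified, one obtains an inverse map $\pi_1 Y'\to \SK_n^\partial$ sending the generator $[A]$ to the class of $A$, and the two maps are mutually inverse, yielding the claimed isomorphism.
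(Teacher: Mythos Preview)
Your proposal is correct and follows essentially the same route as the paper's proof: reduce to the diagonal, apply the $2$-connectedness argument of Lemma \ref{2connected} to pass to $\pi_1$ of the simplicial set of components, contract vertices using the cone on horizontal $1$-simplices, and then observe that the relevant $2$-simplices from the proof of Theorem \ref{thm:pi_0} (namely those not involving the cobordism direction) already produce the $\SK^\partial$ relation and that a generic $2\times 2$ square yields no further relations. The only cosmetic difference is notation ($Y'$ versus the paper's $Z$).
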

 \begin{proof}
We want to compute  $\pi_1 | X^*_{\bullet, \bullet, 0}|$. %\jout{The same argument as in Lemma \ref{2connected} shows that we can replace the simplicial space by the simplicial set given by applying $\pi_0$ levelwise to the diagonal of $X^\delta_{\bullet, \bullet, 0}$.} 
 Write $$Z = |\text{diag}(X^*_{\bullet, \bullet, 0})|.$$
The 2-simplices of the form given by Equation (\ref{eqn:contracting}) are all in $Z$ hence we can perform the contraction of vertices as before.
Now Equations (\ref{eqn:firstone})-(\ref{square relation}) in the proof of Theorem \ref{thm:pi_0} show that the $\pi_1 Z$ satisfies the SK relations.
To conclude that these are all the relations in $\pi_1 Z$ one can show that the relations coming from a generic 2-simplex given by a $2 \times 2$ square  over $\Delta^2$ can be expressed in terms of the SK relations.
 \end{proof}
 We denote the map of spectra induced from the inclusion  $X^*_{\bullet, \bullet, 0} \to X^*_{\bullet, \bullet, \bullet}$ by
 \[
 j \colon K^{\square}(\mbar) \to K^\cube(\mbar).
 \]
 \begin{thm}
  Under the identifications $K_0^{\square}(\mbar) \cong \SK_{n}^\d$ and $K_0^{\cube}(\mbar) \cong \SKbar_{n}^\d$  $j_0$ induces the quotient map 
  \[
  \SK_{n}^\d \rightarrow \SKbar_{n}^\d.
  \]
 \end{thm}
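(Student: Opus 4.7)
The plan is to compare the generators and relations of the two groups directly by tracking what the inclusion $X^\delta_{\bullet,\bullet,0} \hookrightarrow X^\delta_{\bullet,\bullet,\bullet}$ does on the representatives used in the proofs of Theorems \ref{thm:pi_0} and \ref{thm:normalSK}. Both proofs proceed in the same way: one replaces the simplicial space by the simplicial set obtained from its levelwise $\pi_0$ (invoking Lemma \ref{2connected}), contracts all vertices to $\varnothing$ via the canonical 2-simplices $[\varnothing \hookrightarrow A \hookrightarrow B]$, and then identifies the remaining generators of $\pi_1$ as the classes $[A] := \left[\begin{tikzcd}[scale cd=0.6, row sep=scriptsize,column sep=scriptsize]\varnothing \arrow[d, tail] \\ A \end{tikzcd}\right]$ indexed by compact oriented $n$-manifolds $A$ with boundary. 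In the $\square$-case the remaining relations turn out to be exactly the SK$^\partial$-relations; in the $\cube$-case one obtains in addition the trivial-boundary cobordism relations coming from 2-simplices of the shape $\left[\begin{tikzcd}[scale cd=0.6, row sep=scriptsize,column sep=scriptsize] && A' \arrow[dd, tail] \\ A \arrow[urr, squiggly] \arrow[dd, tail] && \\ && C' \\ C \arrow[urr, squiggly] && \end{tikzcd}\right]$.

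First, I would observe that the inclusion $\iota \colon X^\delta_{\bullet,\bullet,0} \hookrightarrow X^\delta_{\bullet,\bullet,\bullet}$ is compatible with these two reduction steps. It commutes with levelwise $\pi_0$, and it sends the vertex-contracting 2-simplex $[\varnothing \hookrightarrow A \hookrightarrow B]$ in the bisimplicial space to the corresponding 2-simplex in the trisimplicial space (extended by identity cobordisms in the third direction, as in \eqref{eqn:contracting}). Hence $\iota$ descends, after the reductions of Theorems \ref{thm:pi_0} and \ref{thm:normalSK}, to a map of pointed CW complexes which on the level of 1-cells sends the generator $[A] \in \pi_1 |\pi_0 \, \mathrm{diag}(X^\delta_{\bullet,\bullet,0})|$ to the generator $[A] \in \pi_1 Y$, using the same symbol on both sides.

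Next, I would check that this generator-level identification is consistent with the isomorphisms constructed in Theorems \ref{thm:pi_0} and \ref{thm:normalSK}. In both proofs the isomorphism is determined by sending $[A]$ to the class of $A$ in the respective quotient of $Gr(\mathcal{M}^\partial_n)$; the disjoint-union square relation $[A \sqcup B] = [A] + [B]$ is obtained in the same way in both proofs, so the isomorphism on $\SK_n^\partial$ and $\SKbar_n^\partial$ are compatible as claimed. Since any element of $\SK_n^\partial$ is a finite $\Z$-linear combination of classes $[A]$, and since $\iota$ sends each such $[A]$ to the $[A]$ representing the same manifold in $\SKbar_n^\partial$, it follows that $j_0$ is exactly the canonical quotient homomorphism.

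The main obstacle, and really the only thing to be careful about, is to verify that the two reduction arguments (vertex contraction and identification of generators) really are functorial under $\iota$, so that no nontrivial twist is introduced on passing to $\pi_1$. This is a matter of inspecting the explicit 1- and 2-simplices used in each argument and checking they all lie in the image of $\iota$ when they involve no cobordism data (equivalently, only identity cobordisms in the third direction), which is the case for the generators $[A]$ and the vertex-contracting, composition, and square-relation 2-simplices that together yield the SK$^\partial$-presentation.
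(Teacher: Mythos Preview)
Your proposal is correct and takes essentially the same approach as the paper: the paper's proof is the single sentence ``This is evident from comparing the proofs of Theorem \ref{thm:pi_0} and Theorem \ref{thm:normalSK},'' and you have simply made explicit the comparison that this sentence leaves to the reader, namely that the inclusion $\iota$ respects the vertex-contraction and generator-identification steps so that $[A]\mapsto[A]$.
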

 \begin{proof}
 This is evident from comparing the proofs of Theorem \ref{thm:pi_0} and Theorem \ref{thm:normalSK}.
 \end{proof}
 
\begin{remarksection}\upshape
Analogously to how it was done in Section \ref{GammaSec}, one can show that the spaces $K^{\square}(\mbar)$ and $\Omega B\C_{n+1}$ are infinite loop spaces. As before the $\Gamma$-space structure comes from the operation of taking the disjoint union of manifolds. One can also check that the obvious map between the corresponding $\Gamma$-spaces for $\Omega B\C_{n+1}$,
$K^{\square}(\mbar)$, and $K^\cube (\mbar)$ restricts to the maps $i$ and $j$ on the first spaces. Therefore, $\Omega |i|$ and $j$ are maps of spectra.
\end{remarksection}

 \bibliographystyle{amsalpha}
  \bibliography{biblio}

\newcommand{\etalchar}[1]{$^{#1}$}
\providecommand{\bysame}{\leavevmode\hbox to3em{\hrulefill}\thinspace}
\providecommand{\MR}{\relax\ifhmode\unskip\space\fi MR }
% \MRhref is called by the amsart/book/proc definition of \MR.
\providecommand{\MRhref}[2]{%
  \href{http://www.ams.org/mathscinet-getitem?mr=#1}{#2}
}
\providecommand{\href}[2]{#2}
\begin{thebibliography}{GMTW09}

\bibitem[AB78]{BF}
E.M.~Friedlander A.K.~Bousfiled, \emph{Homotopy theory of $\gamma$-spaces,
  spectra, and bisimplicial sets}, Geometric applications of homotopy theory
  (Proc. Conf., Evanston, Ill., 1977) \textbf{658} (1978), 80--130.

\bibitem[AS68]{atiyah1968index}
Michael~F Atiyah and Isadore~M Singer, \emph{The index of elliptic operators:
  {III}}, Annals of mathematics (1968), 546--604.

\bibitem[CKMZ23]{campbell2023algebraic}
Jonathan Campbell, Josefien Kuijper, Mona Merling, and Inna Zakharevich,
  \emph{Algebraic $ k $-theory for squares categories}, arXiv preprint
  arXiv:2310.02852 (2023).

\bibitem[ERW19]{ebert2019semisimplicial}
Johannes Ebert and Oscar Randal-Williams, \emph{Semisimplicial spaces},
  Algebraic \& Geometric Topology \textbf{19} (2019), no.~4, 2099--2150.

\bibitem[Gen12]{genauer2012cobordism}
Josh Genauer, \emph{Cobordism categories of manifolds with corners},
  Transactions of the American Mathematical Society \textbf{364} (2012), no.~1,
  519--550.

\bibitem[GMTW09]{GMTW}
S.~Galatius, I.~Madsen, U.~Tillmann, and M.~Weiss, \emph{{The homotopy type of
  the cobordism category}}, Acta Mathematica \textbf{202} (2009), no.~2,
  195--239.

\bibitem[HMM{\etalchar{+}}22]{SKpaper}
R.S. Hoekzema, M.~Merling, L.~Murray, C.~Rovi, and J.~Semikina, \emph{Cut and
  paste invariants of manifolds via algebraic $k$-theory}, Topology and its
  applications \textbf{316} (2022).

\bibitem[Kir06]{kirby2006topology}
Robion~C Kirby, \emph{The topology of 4-manifolds}, vol. 1374, Springer, 2006.

\bibitem[KKNO73]{KKNO}
U.~Karras, M.~Kreck, W.~D. Neumann, and E.~Ossa, \emph{Cutting and pasting of
  manifolds; {${\rm SK}$}-groups}, Publish or Perish, Inc., Boston, Mass.,
  1973, Mathematics Lecture Series, No. 1. \MR{0362360}

\bibitem[Ngu17]{nguyen2017infinite}
Hoang~Kim Nguyen, \emph{On the infinite loop space structure of the cobordism
  category}, Algebraic \& Geometric Topology \textbf{17} (2017), no.~2,
  1021--1040.

\bibitem[RS17]{raptis2017parametrized}
George Raptis and Wolfgang Steimle, \emph{Parametrized cobordism categories and
  the dwyer--weiss--williams index theorem}, Journal of Topology \textbf{10}
  (2017), no.~3, 700--719.

\bibitem[Seg74]{segal1974categories}
Graeme Segal, \emph{Categories and cohomology theories}, Topology \textbf{13}
  (1974), no.~3, 293--312.

\bibitem[Ste21]{steimle2021additivity}
Wolfgang Steimle, \emph{An additivity theorem for cobordism categories},
  Algebraic \& Geometric Topology \textbf{21} (2021), no.~2, 601--646.

\bibitem[Wal69]{wall1969non}
Charles Terence~Clegg Wall, \emph{Non-additivity of the signature}, Inventiones
  mathematicae \textbf{7} (1969), 269--274.

\bibitem[Win71]{Winkelnkemper}
Horst~Elmar Winkelnkemper, \emph{On equators of manifolds and the action of
  $\theta^{n}$}, ProQuest LLC, Ann Arbor, MI, 1971, Thesis (Ph.D.)--Princeton
  University. \MR{2620713}

\end{thebibliography}

\end{document}